\newtheorem{theorem}{Theorem}[section]
\newtheorem*{thm*}{Theorem}
\newtheorem{thm}{Theorem}
\newtheorem{lemma}[theorem]{Lemma}
\newtheorem{prop}[theorem]{Proposition}
\newtheorem{defn}[theorem]{Definition} 
\newtheorem{dfn}[theorem]{Definition} 
\newtheorem{cor}[theorem]{Corollary}
\newtheorem{rmk}[theorem]{Remark}
\newcommand{\lra}{\longrightarrow}
\newcommand{\lla}{\longleftarrow}
\newcommand{\sr}{\stackrel}
\newcommand {\F} {\mathbb{F}}
\newcommand {\bS} {\mathbb{S}}
\newcommand {\K} {\mathbb{K}}
\newcommand {\PP} {\mathbb{P}}
\newcommand {\Z} {\mathbb{Z}}
\newcommand {\R} {\mathbb{R}}
\newcommand {\sus} {\Sigma}
\newcommand {\tensor} {\otimes}
\newcommand {\iso} {\cong}
\newcommand {\dsum} {\oplus}
\newcommand {\we} {\simeq}
\newcommand {\Ei} {E_{\infty}}
\newcommand {\Hi} {H_{\infty}}
\newcommand {\xt} {\widetilde{x}}
\newcommand {\kk} {\mathrm{k}}
\newcommand {\oo} {\overline}
\newcommand {\ssra} {\Rightarrow}
\newcommand {\oxi} {\overline{\xi}}
\newcommand {\BP} {BP\langle 2 \rangle}
\newcommand {\bu} {\bullet}
\newcommand {\mc} {\mathcal}
\newcommand {\HF} {\mathrm{H}\mathbb{F}}
\newcommand {\Hk} {\mathrm{H}\mathrm{k}}
\newcommand {\HH} {\mathrm{H}}
\newcommand {\HZ} {\mathrm{H}\mathbb{Z}}
\newcommand {\tGamma}{\widetilde{\Gamma}}
\newcommand{\bss}{\begin{singlespace}}
\newcommand{\ess}{\end{singlespace}}
\newcommand{\Ab}{A_{\bullet}}
\newcommand{\Bb}{B_{\bullet}}
\newcommand{\Xb}{X_{\bullet}}
\newcommand{\Yb}{Y_{\bullet}}
\newcommand{\Gb}{\Gamma_{\bullet}}
\newcommand{\tGb}{\tGamma_{\bullet}}
\DeclareMathOperator{\Tor}{\mathrm{Tor}}
\DeclareMathOperator{\colim}{\mathrm{colim}}
\DeclareMathOperator{\hocolim}{\mathrm{hocolim}}
\DeclareMathOperator{\Hom}{\mathrm{Hom}}
\DeclareMathOperator{\Cof}{\mathrm{Cof}}
\title{Power operations in the K\"unneth spectral sequence and commutative $\HF_p$-algebras}
\author{Sean Tilson}
\begin{document}
\maketitle

\begin{abstract}
In this paper, we prove the multiplicativity of the K\"unneth spectral sequence.
This is established by an analogue of the Comparison Theorem from homological algebra, which we suspect may be useful for other spectral sequences.
This multiplicativity is then used to compute the action of the Dyer-Lashof algebra on $\HF_p \wedge_{ku} \HF_p$, $\HF_p \wedge_{\BP} \HF_p$, and part of the action on $\HF_p \wedge_{MU} \HF_p$.
We then relate these computations to the construction of commutative $R$-algebra structures on commutative $\HF_p$-algebras.
In the case of $MU$, we obtain a necessary closure condition on ideals $I\subset MU_*$ such that $MU/I$ can be realized as a commutative $MU$-algebra.
\end{abstract}

\tableofcontents
\section{Introduction}

In modern algebraic topology, the theme of importing classical constructions in algebra has been very fruitful.
Modern models of spectra have enabled topologists to exploit the intuition gained from homological and commutative algebra in a rigorous fashion.
The goal of this paper is to continue in that vein and investigate structure present on relative smash products of commutative ring spectra.
The relative smash product plays the role of the (derived) relative tensor product and so is a natural source of new examples of module and ring spectra.
Our primary focus will be on understanding these relative smash products as commutative ring spectra.
In particular, we will find that $A \wedge_R A$ contains interesting information about $R$ as a commutative ring spectrum.

The K\"unneth spectral sequence (sometimes abbreviated as KSS) is the main tool for computing invariants of relative smash products.
It is a natural extension of the classical K\"unneth Theorem of algebraic topology which gives the short exact sequence
\[
0\lra \bigoplus_{i+j=n}\HH_i(X;\Z)\tensor \HH_j(Y;\Z) \lra \HH_n(X \times Y; \Z) \lra \Tor^{\Z}_1(\HH_i(X;\Z),\HH_j(Y;\Z)) \lra 0
\]
where $X$ and $Y$ are spaces and $\HH_n(-;\Z)$ is singular homology with coefficients in $\Z$.
As is common in algebraic topology, this short exact sequence was generalized to a spectral sequence for arbitrary homology theories so that we can recover the above Theorem from the following spectral sequence.
In Chapter 4 section 6 of \cite{EKMM}, they construct the K\"unneth spectral sequence
\[
E_2^{s,t}= \Tor^{\pi_* R}_s(\pi_*A,\pi_*B)_t \Longrightarrow \pi_{s+t}(A \wedge_R B).
\]
where $R$ is an $S$-algebra, and $A,B$ are right and left $R$-modules respectively.
The K\"unneth Theorem is a particular case of the above where $R=\HH\Z$, $A=\HH\Z\wedge \sus^{\infty}X_+$, and $B=\HH\Z\wedge \sus^{\infty}Y_+$.
Previous work on the multiplicativity of the Eilenberg-Moore spectral sequence was done by \cite{LigaardMadsen}.
While they use the diagonal map of spaces in order to developed power operations and used their results to compute $\HH_*(B(G/Top))$, we are unable to take advantage of such structure as it is specific to spaces.
Our work here is inspired by that of Bruner in \cite{HRS} on the Adams spectral sequence as well as the work of Baker and Richter in \cite{BakerRichterAdams} on the K\"unneth spectral sequence.

The main aim of this paper is to enhance this spectral sequence so that we can understand the multiplicative structure present on relative smash products of ring spectra.
In particular, the multiplicativity of the above spectral sequence can provide collapse results, which are very useful in aiding computations.
We establish the multiplicativity of the KSS by use of our Comparison Theorem \ref{thm: Comparison Theorem}.
\begin{thm*}[Comparison Theorem]
\label{thm: comparison intro}
Suppose that we have a map $f: Y \to A$ of $R$-module spectra, an exact filtration $A_{\bullet} \subset A$, and a free and exhaustive filtration $Y_{\bullet} \subset Y$.
Also, suppose that there exist $f_{-1}: Y_{-1} \to A_{-1}$ such that
\[
\xymatrix{
Y_{-1} \ar[rr] \ar[dd]_{f_{-1}}&
&
Y \ar[dd]^f\\
&
&
\\
A_{-1} \ar[rr]&
&
A
}
\]
commutes.
Then there is a map of filtrations $Y_i \sr{f_i} \lra A_i$ such that $\colim f_i\we f$ under the equivalences $\colim Y_i \we Y$ and $\colim A_i \we A$.
Furthermore, the lift $f_{\bu}$ of $f$ is unique up to homotopy of filtered modules, in the sense of Definition \ref{defn:homotopy of filtered modules}.
\end{thm*}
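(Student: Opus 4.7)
The plan is to mirror the classical Comparison Theorem from homological algebra in the homotopy category of $R$-modules: freeness of $Y_{\bu}$ supplies the projectivity needed to lift maps, and exactness of $A_{\bu}$ supplies the acyclicity needed to annihilate obstructions. The maps $f_i \colon Y_i \lra A_i$ will be constructed by induction on $i$, compatible with the filtration inclusions on both sides; the base case is the hypothesized $f_{-1}$.

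For the inductive step, assume $f_j$ has been built for all $j \leq i-1$. Form the composite $g_i \colon Y_{i-1} \lra A_{i-1} \lra A_i$ of $f_{i-1}$ with the filtration inclusion, and let $\partial \colon \sus^{-1}(Y_i/Y_{i-1}) \lra Y_{i-1}$ be the connecting map of the cofiber sequence $Y_{i-1} \lra Y_i \lra Y_i/Y_{i-1}$. Extending $g_i$ across $Y_{i-1} \lra Y_i$ is obstructed by the class $g_i \circ \partial \in [\sus^{-1}(Y_i/Y_{i-1}), A_i]_R$. Since $Y_i/Y_{i-1}$ is a free $R$-module, this class is represented by $\pi_*R$-linear data on generators. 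Its image in $[\sus^{-1}(Y_i/Y_{i-1}), A_i/A_{i-1}]_R$ is null because the composite $A_{i-1} \lra A_i \lra A_i/A_{i-1}$ is null, so $g_i \circ \partial$ is a \emph{cycle} in the sense of the filtration spectral sequence of $A_{\bu}$. Exactness of $A_{\bu}$ then forces this cycle to be a \emph{boundary}, which produces the desired null-homotopy and hence the lift $f_i$.

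Once the compatible system $\{f_i\}$ has been assembled, its colimit yields a map $\colim f_i \colon Y \we \colim Y_i \lra \colim A_i \we A$ extending $f_{-1}$. Exhaustiveness of $Y_{\bu}$, combined with the commutativity of the hypothesized square for $f_{-1}$ and $f$, identifies $\colim f_i$ with $f$ up to homotopy. For uniqueness, given two filtered lifts $f_{\bu}$ and $f'_{\bu}$ of $f$, the same inductive scheme applied to a filtered cylinder on $Y_{\bu}$ produces a filtered homotopy between them, by again invoking freeness to lift and exactness to annihilate successive obstructions; this is exactly the spectral analogue of the standard chain-homotopy uniqueness statement for comparison maps of resolutions.

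The main obstacle is the translation of the algebraic ``cycle is a boundary'' step into the spectral setting: the exactness hypothesis on $A_{\bu}$ must be formulated in such a way that, together with freeness of $Y_i/Y_{i-1}$, it yields a genuine null-homotopy of $g_i\circ\partial$ rather than merely a lift to $A_{i-1}$. Once this dictionary between homological and spectral obstruction theory is set up, the remainder of the argument reduces to formal diagram chases, and the colimit comparison and uniqueness statements follow routinely from the universal property of filtered colimits.
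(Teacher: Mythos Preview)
Your outline has the right shape, but the heart of the inductive step is not yet correct. The observation that $g_i\circ\partial$ maps to zero in $[\sus^{-1}(Y_i/Y_{i-1}),A_i/A_{i-1}]_R$ is vacuous: $g_i$ already factors through $A_{i-1}$, so of course its image in the cofiber vanishes. This only tells you $g_i\circ\partial$ lifts to $A_{i-1}$, which you already knew; it does not produce a null-homotopy, and ``exactness forces the cycle to be a boundary'' does not name an actual mechanism. The paper's definition of \emph{exact} filtration is a statement about the fibers $K_i$ of $A_i\to A$, not about the subquotients $A_i/A_{i-1}$, so your spectral-sequence language is aimed at the wrong tower.

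What is missing is that the inductive hypothesis must also record compatibility with the target map: you need $j_{i-1}\circ f_{i-1}=y_{i-1}$, where $y_{i-1}:Y_{i-1}\to A$ and $j_{i-1}:A_{i-1}\to A$ are the structure maps. This is what makes the obstruction die. Since $\sus^{i-1}G_i$ is the fiber of $s_{i-1}:Y_{i-1}\to Y_i$, the composite $\sus^{i-1}G_i\to Y_{i-1}\xrightarrow{y_{i-1}} A$ is null, so $f_{i-1}$ sends $\pi_*\sus^{i-1}G_i$ into $\ker(\pi_*j_{i-1})=\pi_*\sus^{i-1}K_{i-1}$; exactness then says $\alpha_{i-1}$ annihilates this kernel on homotopy, and freeness of $G_i$ upgrades ``null on homotopy'' to ``null-homotopic.'' This gives an extension $\widetilde{f}_i$, but there is a second step you have omitted entirely: $\widetilde{f}_i$ need not satisfy $j_i\circ\widetilde{f}_i=y_i$, and without that condition your claim that exhaustiveness alone identifies $\colim f_i$ with $f$ is false. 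The paper corrects $\widetilde{f}_i$ by measuring the error $\varphi=j_i\circ\widetilde{f}_i-y_i$, factoring it through $\sus^iG_i$, lifting back to $A_i$ (again using exactness and freeness), and subtracting. Only after this correction does the induction close and the colimit recover $f$.
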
 
This result can be seen as both a lift of the classical Comparison Theorem of homological algebra to filtered $R$-module spectra and as a cellular approximation Theorem.
Using this result we can derive the multiplicativity of the KSS.
This multiplicativity is precisely an example of lifting properties of an $R$-module to a filtration of that $R$-module.
\begin{thm*}[Multiplicativity of the K\"unneth spectral sequence]
\label{thm: mult kss intro}
Let $R$ be a commutative $S$-algebra, and $A$ and $B$ be $R$-algebras.
The K\"unneth spectral sequence is multiplicative.
Further, the filtration of the KSS has an $\Hi$-structure that filters the commutative $S$-algebra structure of $A\wedge_R B$.
\end{thm*}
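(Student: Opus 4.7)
The plan is to deduce multiplicativity by applying the Comparison Theorem to the $R$-algebra structure on $A \wedge_R B$. Construct the underlying filtration of the KSS in the standard way: let $\tGb \to A$ be a cellular filtration of $A$ by free $R$-modules realizing a free resolution of $\pi_* A$ over $\pi_* R$, and set $Y_\bu := \tGb \wedge_R B$, so that $Y_\bu \to A \wedge_R B$ is an exact and exhaustive filtration whose associated spectral sequence is the KSS.

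For the multiplicative pairing, I would apply Theorem \ref{thm: Comparison Theorem} to the map
\[
\mu: (A \wedge_R B) \wedge_R (A \wedge_R B) \lra A \wedge_R B
\]
with target the exact filtration $Y_\bu$ and source the convolution filtration
\[
(Y \wedge_R Y)_n := \bigcup_{i + j \leq n} Y_i \wedge_R Y_j.
\]
The hypotheses are verified as follows: (a) $(Y \wedge_R Y)_\bu$ is free, since its associated graded consists of smash products over $R$ of free cofibers, which are again free; (b) it is exhaustive because the relevant smash products commute with sequential colimits; (c) setting $Y_{-1} = *$, the initial compatibility is vacuous. The Comparison Theorem then produces a filtered lift $\tilde\mu_\bu: (Y \wedge_R Y)_\bu \lra Y_\bu$ of $\mu$, and passing to associated graded yields pairings on each $E_r$-page. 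Compatibility with the $\Tor$-pairing on $E_2$ follows from naturality together with the uniqueness clause of the Comparison Theorem, and the Leibniz rule is automatic for a filtered lift of a module map.

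For the $\Hi$-structure, I would apply the same argument to each structure map
\[
\xi_n: E\Sigma_{n+} \wedge_{\Sigma_n} (A \wedge_R B)^{\wedge n} \lra A \wedge_R B,
\]
taking as source the convolution filtration of $E\Sigma_{n+} \wedge_{\Sigma_n} Y_\bu^{\wedge n}$ and as target $Y_\bu$. Freeness of this source reduces, via a decomposition of $E\Sigma_{n+} \wedge_{\Sigma_n}(-)$ along $\Sigma_n$-isotropy, to the freeness of lower extended powers of free cells. The Comparison Theorem then lifts each $\xi_n$ to a filtered map, and the uniqueness (up to homotopy of filtered modules) ensures that the operadic identities defining the $\Hi$-structure on $A \wedge_R B$ descend to the filtration $Y_\bu$.

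The main obstacle is verifying the freeness hypothesis, especially for the extended powers. The binary case reduces to the standard fact that smash products of free cell $R$-modules are free. For higher $n$ the $\Sigma_n$-action on $F_{i_1} \wedge \cdots \wedge F_{i_n}$ mixes cells of different filtration degrees, so one must check that every isotropy stratum of $E\Sigma_{n+} \wedge_{\Sigma_n} Y_\bu^{\wedge n}$ assembles into a free cell filtration. A secondary subtlety is that the commutative $S$-algebra structure on $A \wedge_R B$ uses the smash product over $S$, so one must confirm that this interacts compatibly with the $R$-module filtration on $Y_\bu$, which is needed to view each $\xi_n$ as a map of $R$-modules to which the Comparison Theorem applies.
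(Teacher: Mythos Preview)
There is a genuine gap in your setup: the hypotheses of the Comparison Theorem are not met for the maps you want to lift.

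First, the target filtration $Y_\bu = A_\bu \wedge_R B$ is in general \emph{not} exact in the paper's sense. Exactness of $A_\bu$ says that the maps $\pi_i:\Sigma^i K_i\to\Sigma^{i+1}K_{i+1}$ vanish in homotopy, but the $K_i$ are not free, so $\pi_i\wedge_R 1_B$ need not vanish in homotopy. Equivalently, the cofiber sequences $\Sigma^iK_i\wedge_R B\to Y_i\to A\wedge_R B$ need not give short exact sequences on $\pi_*$; that is exactly what the proof of the Comparison Theorem uses (both to kill the image of $\Sigma^{i-1}G_i$ in $\pi_*A_i$ and to lift $d$ over $\partial_i$). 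Second, your source $(Y\wedge_R Y)_\bu$ is in general \emph{not} free: its filtration quotients are wedges of $\Sigma^{i+j}F_i\wedge_R B\wedge_R F_j\wedge_R B$, and $B\wedge_R B$ is not a free $R$-module. So neither side of your proposed lifting problem satisfies the required hypothesis.

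The paper's argument avoids both issues by applying the Comparison Theorem one level earlier: it lifts the multiplication $A\wedge_R A\to A$ to $\mu'_\bu:\Gamma'_\bu(A_\bu,A_\bu)\to A_\bu$, where the target $A_\bu$ is exact by construction and the source has free filtration quotients $F_i\wedge_R F_j$ (Corollary~\ref{cor:mult}). One then obtains the pairing on the K\"unneth filtration by writing
\[
\Gamma'_\bu(A_\bu\wedge_R B,\,A_\bu\wedge_R B)\;\xrightarrow{\,1\wedge\tau_\bu\wedge 1\,}\;\Gamma'_\bu(A_\bu,A_\bu)\wedge_R(B\wedge_R B)\;\xrightarrow{\,\mu'_\bu\wedge\mu_B\,}\;A_\bu\wedge_R B,
\]
using only the ordinary (unfiltered) multiplication on $B$. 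The $\Hi$-structure is handled the same way: Corollary~\ref{cor:Hinfty structure on filtration} produces the extended-power maps into $A_\bu$, and these are then combined with the $\Hi$-structure on $B$ and a cellular diagonal on $E\Sigma_r$. In short, you should not treat $B$ as part of the filtration when invoking the Comparison Theorem; it enters only afterwards, via its own algebra structure.
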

This $\Hi$-structure is not simply gotten by having each stage of the filtration being an $\Hi$-algebra.
It is more subtle and not directly accessible by algebraic techniques, see Section \ref{subsec:relation to alg} as well as Remark \ref{rmk: BSS}.
Note that this result fixes an error in the literature.
In \cite{BakerLazarev}, the authors provide a flawed proof of the multiplicativity of the KSS, see subsection \ref{subsubsec:previous work} for more details.
With this new-found multiplicative structure we are able to compute the homotopy of $\HF_p \wedge_R \HF_p$ for various commutative $S$-algebras $R$.
We then take advantage of the additional $\Hi$-structure to compute the action of the Dyer-Lashof algebra on various relative smash products.
For example, we have the following computation.
\begin{prop}
The K\"unneth spectral sequence 
\[
\Tor^{MU_*}_* (\F_p,\F_p) \ssra \pi_*\HF_p \wedge_{MU} \HF_p
\]
collapses at the $E_2$ page.
The resulting algebra $\pi_* \HF_p \wedge_{MU} \HF_p$ is an exterior algebra $E_{\F_p}[\oo{p},\oo{x}_1,\oo{x}_2, \ldots]$ with Dyer-Lashof operations given by $Q^{\rho(i)}(\oo{p})=\pm\oo{x}_{p^{i-1}-1}$ and $Q^{p^i}(\oo{x}_{p^{i-1}-1})=\pm\oo{x}_{p^i-1}$ where $\vert \oo{p} \vert=1$ and $\vert \oo{x}_n \vert=2n+1$.
\end{prop}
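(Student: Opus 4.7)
The plan is to compute the $E_2$-page by a Koszul resolution, deduce collapse from the filtration-$1$ placement of all algebra generators, and then transport Dyer--Lashof operations from the dual Steenrod algebra via the $\Hi$-ring map furnished by the multiplicativity theorem. First, since $MU_* = \Z[x_1, x_2, \ldots]$ with $\vert x_i \vert = 2i$ and $(p, x_1, x_2, \ldots)$ is a regular sequence cutting out $\F_p$, the associated Koszul complex is a free $MU_*$-resolution of $\F_p$. Tensoring with $\F_p$ yields
\[
\Tor^{MU_*}_*(\F_p, \F_p) \iso E_{\F_p}[\oo{p}, \oo{x}_1, \oo{x}_2, \ldots]
\]
with $\oo p$ in homological filtration $1$ and internal degree $0$, and each $\oo{x}_i$ in homological filtration $1$ and internal degree $2i$, matching the stated total degrees. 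Because $d_r$ lowers homological filtration by $r$, every algebra generator is a permanent cycle; the Leibniz rule supplied by the multiplicativity theorem then propagates this to every product, forcing collapse at $E_2$. Extension problems passing to $\pi_*$ are confined to potential squares of the odd-degree generators and are resolved by graded-commutativity at odd $p$ and by direct verification against the dual Steenrod algebra at $p = 2$.

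For the Dyer--Lashof operations I would use the $\Hi$-ring map
\[
\HF_p \wedge \HF_p \iso \HF_p \wedge_S \HF_p \lra \HF_p \wedge_{MU} \HF_p
\]
induced by the unit $S \to MU$, which by the Comparison Theorem lifts to a map of filtered $\Hi$-modules and hence to a map of K\"unneth spectral sequences respecting the filtered $\Hi$-structure. On $E_2$-pages a bidegree count identifies each Milnor exterior generator $\tau_i$ (or $\xi_i$ at $p=2$) of the dual Steenrod algebra with $\oo p$ for $i = 0$ and with $\oo{x}_{p^i - 1}$ for $i \geq 1$, up to units in $\F_p^\times$. Transporting the known Dyer--Lashof formulas on the $\tau_i$ (or $\xi_i$) across this $\Hi$-equivariant map then yields the stated relations on $\oo p$ and the $\oo{x}_{p^k - 1}$.

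The principal technical obstacle lies in this last step. One must verify that the $E_2$-map sends $\tau_i$ to $\oo{x}_{p^i-1}$ up to a unit rather than to $\oo{x}_{p^i-1}$ plus a decomposable or a class of strictly higher filtration, either of which could in principle deform the Dyer--Lashof formula in the target. Uniqueness is facilitated by the fact that $\oo{x}_{p^i - 1}$ is the only exterior generator in its bidegree at filtration $1$, and by the $\Hi$-filtration on $\HF_p \wedge_{MU} \HF_p$ from the multiplicativity theorem, which controls higher-filtration contributions well enough to rule out unseen ambiguities once the collapse and extension analysis of the previous step is in hand.
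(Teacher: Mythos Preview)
Your collapse argument is correct and matches the paper's: the Koszul complex on $(p,x_1,x_2,\ldots)$ gives an exterior $E_2$-page generated on the $1$-line, and multiplicativity forces collapse.

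The Dyer--Lashof step, however, has a genuine gap. You propose to transport operations along the map $\HF_p\wedge_S\HF_p\to\HF_p\wedge_{MU}\HF_p$ induced by $S\to MU$, and you invoke the Comparison Theorem to promote this to a map of K\"unneth spectral sequences. But the Comparison Theorem is stated for filtrations of $R$-modules over a \emph{fixed} $R$; here the base changes from $S$ to $MU$, so it does not apply as written. More seriously, even granting a map of spectral sequences, the source KSS has $E_2=\Tor^{\pi_*S}_*(\F_p,\F_p)$, and the Milnor generators $\tau_i$ do not live there in any visible way---they lie in the abutment $\pi_*(\HF_p\wedge\HF_p)$, which is separated from $E_2$ by a highly nontrivial spectral sequence. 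So your ``bidegree count on $E_2$-pages'' is comparing objects that do not sit in the same place. Finally, even working directly with the $\Hi$-ring map on homotopy, your degree argument does not rule out $\tau_i\mapsto 0$; you never establish nonvanishing.

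The paper avoids all of this by comparing in a different direction. Instead of changing the base ring, it keeps $MU$ fixed and uses the map $\check\varphi:\HF_p\wedge MU\wedge_{MU}\HF_p\to\HF_p\wedge_{MU}\HF_p$ induced by the $MU$-action map $\HF_p\wedge MU\to\HF_p$. Both sides are now KSS's over $MU_*$ computed with the \emph{same} Koszul complex, so the induced map on $E_2$ is simply $\HH_*(MU;\F_p)\otimes E_{\F_p}[\oo p,\oo x_1,\ldots]\to\F_p\otimes E_{\F_p}[\oo p,\oo x_1,\ldots]$. The source abuts to the dual Steenrod algebra (since $\HF_p\wedge MU\wedge_{MU}\HF_p\simeq\HF_p\wedge\HF_p$), and the identification of which class on its $E_2$-page detects $\tau_i$ (or $\oxi_{i+1}$ at $p=2$) comes from the known Hurewicz map for $MU$ together with the Baker--Richter isomorphism $\Tor^{MU_*}(\HH_*(MU;\F_p),\F_p)\cong\Tor^{BP_*}(\HH_*(BP;\F_p),\F_p)$. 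With this in hand, Steinberger's formulas push forward along $\check\varphi$ to give the stated operations, and the decomposable ambiguity is killed precisely because $\check\varphi$ annihilates $\HH_{>0}(MU;\F_p)$.
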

The classes $x_{p^i-1}\in\pi_{2p^i-2}MU$ are classes that hit the Hazewinkel generators under the $p$-localization map
\[
MU \lra MU_{(p)}.
\]
The classes $\oo{x_i}\in\pi_{2i+1}(\HF_p\wedge_{MU}\HF_p)$ are induced classes, see Proposition \ref{prop: Tor-1 realization} for an explicit construction.
This result gives the following necessary condition on an ideal in order for the quotient to be realizable as a commutative $S$-algebra.
This is result is comparable to the work of Strickland in \cite{StricklandMU}.
\begin{cor}
\label{cor:strickland analog at p}
Let $I$ be an ideal of $MU_*$ generated by a regular sequence. 
If $I$ contains a finite non-zero number of the $x_{p^i-1}$, then the quotient map $MU \to MU/I$ cannot be realized as a map of commutative $S$-algebras.
\end{cor}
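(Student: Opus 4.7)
The plan is to apply the $\Hi$-compatibility from the multiplicativity theorem to the map induced by $MU \to MU/I$ and read off a contradiction from the Dyer-Lashof relations of the preceding Proposition. Suppose for contradiction that $MU\to MU/I$ is a map of commutative $S$-algebras. Base change then makes $\HF_p$ an $MU/I$-module, and we obtain a map of commutative $\HF_p$-algebras
\[
f: \HF_p \wedge_{MU} \HF_p \lra \HF_p \wedge_{MU/I} \HF_p.
\]
By the multiplicativity theorem, both K\"unneth filtrations carry $\Hi$-structures filtering the commutative algebra structure on their colimits, and applying the uniqueness in the Comparison Theorem to $f$, the map $f_*$ commutes with the Dyer-Lashof operations.

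Since $I$ is generated by a regular sequence, $\pi_*(MU/I)=MU_*/I$ is again polynomial-like, and its KSS $E_2$ is an exterior algebra on Tor-1 classes $\oo{r}$ for each generator of the augmentation ideal $(p,x_1,x_2,\ldots)/I$; in particular, generators lying in $I$ contribute nothing. By the naturality of the Tor-1 realization in Proposition \ref{prop: Tor-1 realization}, which constructs $\oo{x}_n$ from the cofiber of $x_n:\sus^{2n}MU\to MU$, the map $f_*$ sends $\oo{x}_n$ to the analogous class in the target. This image class is nonzero when $x_n\notin I$, and it is zero when $x_n\in I$ since the defining cofiber sequence splits after base change to $MU/I$.

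The hypothesis tells us that $\{i : x_{p^i-1}\in I\}$ is finite and nonempty, so let $i_0$ be its maximum. Then $x_{p^{i_0+1}-1}\notin I$, giving $f_*(\oo{x}_{p^{i_0+1}-1})=\oo{x}_{p^{i_0+1}-1}\neq 0$ in the target. But the Dyer-Lashof relation $Q^{p^{i_0+1}}(\oo{x}_{p^{i_0}-1})=\pm\oo{x}_{p^{i_0+1}-1}$ from the Proposition, combined with the $\Hi$-naturality of $f_*$, gives
\[
\pm\,\oo{x}_{p^{i_0+1}-1}=f_*\bigl(Q^{p^{i_0+1}}(\oo{x}_{p^{i_0}-1})\bigr)=Q^{p^{i_0+1}}\bigl(f_*(\oo{x}_{p^{i_0}-1})\bigr)=Q^{p^{i_0+1}}(0)=0,
\]
a contradiction.

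The main obstacle is the identification of $f_*$ on Tor-1 classes, in particular the vanishing $f_*(\oo{x}_n)=0$ when $x_n\in I$; this rests on the Tor-1 realization of Proposition \ref{prop: Tor-1 realization} being natural with respect to change of base ring, which is expected since the relevant cofiber sequence becomes split over $MU/I$. Everything else in the argument is formal, following from the multiplicativity and $\Hi$-compatibility provided by the Comparison Theorem.
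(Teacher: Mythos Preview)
Your argument is essentially the same as the paper's. Both assume the map is one of commutative $S$-algebras, pass to the induced map $\HF_p\wedge_{MU}\HF_p\to\HF_p\wedge_{MU/I}\HF_p$, use that it respects Dyer-Lashof operations, and derive a contradiction from the relations $Q^{p^i}(\oo{x}_{p^{i-1}-1})=\pm\oo{x}_{p^i-1}$ together with the vanishing/nonvanishing of the $\oo{x}_{p^i-1}$ in the target according to whether $x_{p^i-1}\in I$. The paper phrases it as ``all higher $\oo{x}_{p^{i+k}-1}$ land in the kernel'' while you pick the maximal $i_0$ and look one step up; these are logically equivalent.

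Two minor remarks. First, your justification that $f_*$ commutes with Dyer-Lashof operations via the $\Hi$-filtration and the Comparison Theorem is more than you need: the paper simply observes that $f$ is a map of commutative $\HF_p$-algebras, and Dyer-Lashof operations are natural for such maps. Second, your flagged ``main obstacle'' (that $\oo{x}_n\mapsto 0$ when $x_n\in I$ and $\oo{x}_n\mapsto$ nonzero when $x_n\notin I$) is the same point the paper handles by asserting ``but they are not by construction of $I$''; neither proof spells this out further, and both are relying on the identification $\Tor_1\cong J/J^2$ for the augmentation ideal $J$ of $MU_*/I$.
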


This result is not the only application of our computations.
The classes in these relative smash products can frequently be realized as differences of null-homotopies so that one obtains a function
\[
\xymatrix{
\pi_n(A\wedge_R A)\times CAlg_{R}(A,X)^{\times 2} \ar[r]&
\pi_nX\\
(\oo{x},f,g) \ar@{|->}[r]&
d(f,g)_*(\oo{x})
}
\]
that measures differences between commutative $R$-algebra maps $f$ and $g$.
The map 
\[
d(f,g):A\wedge_R A \lra X
\]
is the coproduct of $f,g\in CAlg_R(A,X)$, see section \ref{sec:applications and interpretations} .
It is a map of commutative ring spectra and therefore respects power operations.
So if we think of $\pi_*(A \wedge_R A)$ as giving functions on $CAlg_R(A,-)$ we can then hope to interpret the power operations we compute as giving dependence relations between such functions.
We believe this perspective will be fruitful and hope to study it in future work with Markus Spitzweck on fundamental groups of derived affine schemes.
We use the multiplicativity of the KSS to compute the homology of connective Morava $E$-theory with Lukas Katth\"an in \cite{HFpenKatthaenTilson}.

\subsection{Outline}
We begin in section \ref{sec:Filtrations and Complexes} by introducing some basic properties of the category of filtered $R$-modules and the category of complexes of $R$-modules for a commutative $S$-algebra $R$.
The category of filtered $R$-modules, its tensoring over spaces, and its monoidal structure are what we will take advantage of in order to obtain structural results about spectral sequences.
We will explain the relationship between the categories of filtered $R$-modules and complexes of $R$-modules as well as relating their monoidal structures.
That structures on filtrations induce analogous structures on spectral sequences can be found in \cite{DuggerMultSS} or \cite{TilsonThesis}.
We neglect to include it here as it is well known to the experts.

In section \ref{sec:Comparison Theorem}, we prove our Comparison Theorem \ref{thm: comparison intro}.
This result is what allows us to show that the KSS is multiplicative.
This result is essentially a cellular approximation argument.
We think it may be of use in other settings, just as the Comparison Theorem of homological algebra has many applications.

In section \ref{sec:KSS}, we begin studying the KSS.
This is the main tool for computing invariants of relative smash products.
Using the results of the previous section, we are able to show that many examples of KSSs are multiplicative.
This fact should be thought of as a mirror of the analogous result regarding the Adams and Adams-Novikov spectral sequences.
After proving that the KSS has an $\Hi$-filtration, Theorem \ref{thm: mult kss intro}, we utilize this extra structure to compute the action of the Dyer-Lashof algebra on various relative smash products of $\HF_p$ in section \ref{sec:comps}.
While we only discuss relative smash products over $ku$, $\BP$, and $MU$ here, we hope to discuss relative smash products over less regular (in the technical sense) ring spectra in the future.

In section \ref{sec:applications and interpretations}, we interpret our computations in terms of differences of null-homotopies.
This interpretation yields a necessary condition for determining if a map between commutative $S$-algebras over $\HF_p$ is a map of commutative $S$-algebras in terms of its effect on homotopy.
Strickland \cite{StricklandMU} obtained comparable, and stronger, results.

We expect similar results when one works over more complicated commutative $S$-algebras than $\HF_p$.
However, our methods here do not extend to Landweber exact spectra.
This is due to the observation of Gerd Laures that Landweber exact spectra are flat over $MU$, which was related to us by Charles Rezk.

\subsection{Conventions}
We work with $S$-modules as developed in \cite{EKMM}.
The main reason for this choice is that the model category structures developed in \cite{EKMM} is well adapted to computation and categorical manipulations.
In particular, every $S$-module is fibrant and there exist functorial factorization and cofibrant replacement functors.
Thus, when we restrict attention to cofibrant objects and consider diagrams of cofibrations we can work as if in the homotopy category with the added benefit that our constructions are natural.
This will be obvious from our work in section \ref{sec:Filtrations and Complexes} as our arguments feel like those made in triangulated categories but are a bit stronger.
We will also not distinguish between fiber and cofiber sequences as we are working in a stable model category.

We will frequently go back and forth between $R$-module spectra and their graded homotopy groups.
If we have denoted an $R$-modules spectrum by $F_i$ then by $\F_i$ we will mean $\pi_*F_i$ as a $\pi_*R$-module.
We will reserve the letter $F$ for free $R$-modules, and $A$ and $B$ for commutative $S$-algebras.
By a free $R$-module $F$ we mean a wedge of $S^n_R$, which are cofibrant replacements of $\sus^nR$ as an $R$-module.
This is necessary as one drawback of the model of spectra presented in \cite{EKMM} is that $R$ may not be a cofibrant $R$-module.
Our primary reason for working with free $R$-modules is that $R$-module maps out of free $R$-modules are determined by their effect in homotopy.
Most importantly, if a map out of a free $R$-module is $0$ in homotopy then it is null-homotopic.

\subsection{Relation to other work}
There has been previous work on the K\"unneth spectral sequence, its multiplicativity, and related spectral sequences.
In \cite{BakerLazarev}, a proof is given that the K\"unneth spectral sequence is multiplicative.
Unfortunately, this proof has a fatal flaw, they use another filtration that they claim is equivalent.
We address this mistake in Subsection \ref{subsubsec:previous work}.
There is also the work of Ligaard and Madsen on power operations for the Eilenberg-Moore spectral sequence in the case of infinite loop spaces.
Key to there construction of operations is the existence of a diagonal map on spaces.
In our setting we have no such diagonals to take advantage of.

There is the B\"okstedt spectral sequence which looks very similar to the K\"unneth spectral sequence.
However, the filtration on the former comes from, when one is working with $\Ei$-ring spectra, the skeletal filtration of $S^1$ inducing a filtration on $S^1 \tensor A$.
This is then a filtration of $\Ei$-ring spectra and the fundamental class of the circle acts via $\Ei$-ring maps, hence $\sigma$ commutes with the action of the Dyer-Lashof algebra.
This comes from the action of $S^1$ which we don't have on $k\wedge_R k$.
See Remark \ref{rmk: BSS} for more details.
Our computations of $k \wedge_R k$ can not be written as tensoring with $S^1$.
Angeltveit and Rognes take great advantage of this circle action in \cite{AngeltveitRognes} to provide a rigorous proof of B\"okstedts result regarding operations in this spectral sequence.

There is also the spectral sequence coming from filtering the Bar construction $B(A,R,B)$ by its skeleta.
In the case that $R$ is a $\Ei$-algebra and both $A$ and $B$ are $R$-algebras, this also has a product structure.
However, the $E_2$-page is not the same as applying the functor $\pi_*(R\wedge -)$ does not always produce free $\pi_*R$-modules and so applying homotopy to this bar construction does not provide a free resolution.
This is reminiscent of the fact that tensoring $M$ with a free module does not produce a free module (consider the situation when $M$ has a nontrivial torsion submodule).

Having a theory of operations that always preserves the filtration would imply that the composition
\[
D^{(k)}_r(A_n) \lra \Cup_{i+j=k+n} \tGamma^r_{k,n} \lra A_{k+n}
\]
actually lifts to a map
\[
D^{(k)}_r(A_n) \lra A_n
\]
for a filtered spectrum 
\[
\Ab: \cdots \lra A_n \lra A_{n+1} \lra A_{n+2} \lra \cdots.
\]
In particular, when $k=0$ we would have a map
\[
A_n^{\wedge r} \lra A_n.
\]
In our computations in Section \ref{sec:comps}, we see that this is not the case as the induced product on $\Tor$ is that of a graded algebra.
If one considers $\Tor^{\F_2[x]/(x^2)}(\F_2,\F_2)$ where $x$ is in degree $1$ then we get a graded algebra, in fact a divided power algebra.
The power operations we are able to compute here in Section \ref{sec:comps} do in fact all preserve the filtration.
Why this is the case is open to further investigation.
We certainly do not believe it to be necessarily true.

\section*{Acknowledgments}
There are many people who deserve thanks and acknowledgment for their support, criticisms, and advice during the production of this work.
However, I will be brief as a more thorough list of acknowledgements can be found in my thesis.
This document contains of much of the work I did for my thesis at Wayne State University under the guidance of Robert Bruner.
As such, I must thank him at the outset for being right about things so frequently and being patient when I didn't believe him.
I would also like to thank the members of my committee Daniel Isaksen, John Klein, James McClure, and Andrew Salch.
In particular, I learned a great deal from Andrew in the last couple years of my program.
Michael Mandell suggested what became the Comparison Theorem \ref{thm: Comparison Theorem}, and for that I am grateful.
I would similarly like to thank Andrew Baker and Tyler Lawson for their encouragement, insight, and interest in various aspects of this project.
Lastly, this would have been impossible without the support of Jim Veneri and Mike Catanzaro.
I would also like to thank an anonymous referee for helpful comments.

\section{Filtrations and Complexes}
\label{sec:Filtrations and Complexes}
In this section we develop some preliminary machinery and terminology for filtered $R$-module spectra, referred to frequently as filtrations.
The goal of this section is the necessary background for our Comparison Theorem, Theorem \ref{thm: Comparison Theorem}, and working with spectral sequences.
We believe this section may also be of use to those wishing to establish similar structures on other spectral sequences.

The main objects discussed in this section are filtered $R$-modules and complexes of $R$-modules.
There is a correspondence between these two categories, but it is only partially defined on complexes of $R$-modules.
While we can always associate the associated graded complex to any filtration, we are not always able to invert this procedure.
This is possible when the complex is a free resolution, in the classical sense, which will be proved in section \ref{subsec:filtration to resolution and back again}.

\subsection{Basic definitions}
Let $R$ be a commutative $S$-algebra that is cofibrant.
We begin by giving a few definitions.
\begin{dfn}
A \underline{filtered $R$-module} $\Ab$ is a diagram of $R$-modules

\[
	\xymatrix{
	A_{-1}=* \ar[r]^{\alpha_{-1}}&
     A_0 \ar[r]^{\alpha_0}&
	A_1 \ar[r]^{\alpha_1}&
     A_2 \ar[r]&
	\cdots\\
}
\]
where each $\alpha_i$ is a cofibration of $R$-modules.
We call a filtered $R$-module $\Ab$ a \underline{filtration} of an $R$-module $A$ if for every $i$ there is a map $j_i:A_i \to A$ of $R$-modules such that

\[
	\xymatrix{
	A_{-1}=* \ar[r]^{\alpha_{-1}} \ar[drrrr]&
	A_0 \ar[r]^{\alpha_0} \ar[drrr]&
	A_1 \ar[r]^{\alpha_1} \ar[drr]&
	A_2 \ar[r]^{\alpha_2} \ar[dr]&
	\cdots \ar[r] \ar[d]&
	\colim(A_i) \ar[dl]\\
	&
	&
	&
	&
	A&
}
\]
commutes. 
We call the filtration \underline{free} or \underline{projective} if the filtration quotients, $\sus^i F_i := \Cof(A_{i-1} \lra A_i)$, are free $R$-modules or summands of free $R$-modules, respectively.
We call the filtration \underline{exhaustive} if the $j_i$ induce an equivalence $\hocolim_i A_i \approx A$.
Let $\sus^i K_i:= Fiber(A_i \sr {j_i} \lra A)$, and let $\pi_i$ denote maps induced by the $\alpha_i$ via the following diagram.
\[
	\xymatrix{
	\sus^i K_i \ar[rr] \ar[d]_{\pi_i}&
	&
  A_i \ar[rr]^{j_i} \ar[d]_{\alpha_i}&
	&
	A \ar[d]^{1_A} \\
	\sus^{i+1} K_{i+1} \ar[rr]&
	&
	A_{i+1} \ar[rr]^{j_{i+1}}&
	&
	A
}
\]
We call the filtration \underline{exact} if the maps $\pi_{i}$ are all 0 in homotopy.
We will call a filtration $\Ab$ \underline{cellular} if each $R$-module $A_i$ is a cellular $R$-module and each pair $(A_{i+1},A_i)$ is a relative cell $R$-module.
If there is a morphism of filtrations $f_{\bu}: \Ab\to \Bb$ such that each $f_i: A_i \to B_i$ is a weak equivalence and $\Bb$ is a cellular filtration, then we say that $\Ab$ has the homotopy type of a cellular filtration.
\end{dfn}

Every cofibrant $R$-module $A$ can be thought of as the filtered $R$-module $c(A)_{\bu}$ by taking all of the maps to be the identity.
The associated graded of this filtration is the complex of $R$-modules that is just $A$ in degree $0$ and the trivial $R$-module in every other degree.
Frequently, we will be considering filtrations $\Ab$ of $A:=\hocolim_{\bu} \Ab$ and so we will not mention the $R$-module that $\Ab$ is a filtration of, as it is implicit.
We will be primarily concerned with cellular filtrations.
As every $R$-module $M$ has a cellular filtration this will impose no real restriction.
This will follow from our construction of a projective and exact filtration from a projective resolution, see Proposition \ref{prop:resolution to filtration}.
Therefore, filtrations will be assumed to be cellular without comment.
As the maps in the filtration are assumed to be cofibrations $\colim \Ab \we \hocolim \Ab$.

\begin{lemma}
\label{lem:partial=0}
In an exact filtration, each map 
\[
\sus^{-1} A \sr{\partial_i} \lra \sus^i K_i,
\]
$i \geq 0$, induces the 0 map in homotopy.
Thus $\pi_* \sus^i K_i$ is the kernel of $j_i : \pi_* A_i \to \pi_* A$. 
Further, every exact filtration is exhaustive.
\end{lemma}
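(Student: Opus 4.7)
The plan is to deduce all three assertions from the long exact sequence of the fiber sequence $\Sigma^i K_i \to A_i \xrightarrow{j_i} A$, together with naturality of the connecting map. Since the definition of $\pi_i$ makes the square between consecutive fiber sequences commute with $1_A$ on $A$, naturality of the boundary gives $(\partial_{i+1})_* = (\pi_i)_* \circ (\partial_i)_*$, and exactness ($(\pi_i)_* = 0$) then forces $(\partial_i)_* = 0$ for all $i \geq 0$. The only delicate point is the base case $i = 0$: one has to observe that $A_{-1} = *$ forces $\Sigma^{-1} K_{-1} = \Sigma^{-1} A$, under which identification the map $\pi_{-1}$ is literally $\partial_0$, so this case is absorbed into the exactness hypothesis, read so as to include the index $i = -1$ at which $\pi_{-1}$ is defined. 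This I expect to be the main (essentially only) subtlety of the proof; everything else is formal.

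Once $(\partial_i)_* = 0$ is in hand, the long exact sequence collapses to short exact sequences
\[
0 \to \pi_* \Sigma^i K_i \to \pi_* A_i \xrightarrow{(j_i)_*} \pi_* A \to 0,
\]
proving the second assertion and giving the useful byproduct that each $(j_i)_*$ is surjective.

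For the exhaustiveness claim I would check that $\hocolim_i A_i \to A$ induces an isomorphism on $\pi_*$; since the structure maps are cofibrations, $\pi_* \hocolim_i A_i = \colim_i \pi_* A_i$. Surjectivity of $\colim_i \pi_* A_i \to \pi_* A$ follows immediately from surjectivity of each $(j_i)_*$. For injectivity, if $a \in \pi_n A_i$ satisfies $(j_i)_*(a) = 0$, lift $a$ via the kernel identification to some $\tilde{a} \in \pi_n \Sigma^i K_i$; the commutative square defining $\pi_i$ then shows that the image of $a$ in $\pi_n A_{i+1}$ equals the image of $(\pi_i)_*(\tilde{a}) = 0$, so $a$ dies at the next filtration stage and therefore represents $0$ in the colimit.
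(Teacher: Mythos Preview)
The paper states this lemma without proof, so there is nothing to compare against; your argument is correct and supplies the omitted details. The one point you flag --- whether the exactness hypothesis covers $\pi_{-1}$ --- is resolved by the paper's own conventions: the filtration begins at $A_{-1}=*$, the map $\alpha_{-1}\colon *\to A_0$ is part of the data, and the $\pi_i$ are by definition the maps induced by the $\alpha_i$, so $\pi_{-1}$ is among them. Your identification $\pi_{-1}\simeq\partial_0$ (via $\Sigma^{-1}K_{-1}\simeq\Sigma^{-1}A$, since $A_{-1}=*$) then closes the base case, and the rest --- the short exact sequence from $(\partial_i)_*=0$, and the surjectivity/injectivity check for $\colim_i\pi_*A_i\to\pi_*A$ --- is exactly the standard argument.
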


As mentioned above, we also work with complexes of $R$-module spectra.
\begin{dfn}
A \underline{complex} $A \lla F_*$ (of $R$-modules) is a diagram
\[
	\xymatrix{
	A&
	& 
	F_0 \ar[ll]_\varepsilon&
	& 
	F_1 \ar[ll]_{d_0}&
	& 
	F_2 \ar[ll]_{d_1}&
	& 
	F_3 \ar[ll]_{d_2}& 
	\ar[l] \cdots \\
	    }
	    \]  
such that
\[
	\xymatrix{
	\pi_* A&
	& 
	\pi_* F_0 \ar@{->>}[ll]_\varepsilon&
	& 
	\pi_* F_1 \ar[ll]_{d_0}&
	& 
	\pi_* F_2 \ar[ll]_{d_1}&
	& 
	\pi_* F_3 \ar[ll]_{d_2}& 
	\ar[l] \cdots \\
	    }
	    \]
is a complex in the category of $\pi_* R$-modules.
A complex is \underline{exact} if taking homotopy groups induces an exact sequence in the category of $\pi_* R$-modules.
We will call such a complex a \underline{resolution}.
We call a resolution \underline{free} or \underline{projective} if the $F_i$ are free or \underline{projective} $R$-modules, respectively.
\end{dfn}

We realize this use of the asterisk may conflict with the convention that $M_*$ denotes $\pi_*M$, but the alternative is less desirable.

\begin{lemma}
Given a projective resolution of $\pi_*A$ by $\pi_*R$-modules
\[
	\xymatrix{
	\pi_* A&
	& 
	\F_0 \ar@{->>}[ll]_\varepsilon&
	& 
	\F_1 \ar[ll]_{d_0}&
	& 
	\F_2 \ar[ll]_{d_1}&
	& 
	\F_3 \ar[ll]_{d_2}& 
	\ar[l] \cdots \\
	    }
	    \]
there is a projective resolution of $R$-modules

\[
	\xymatrix{
	A&
	& 
	F_0 \ar@{->>}[ll]_\varepsilon&
	& 
	F_1 \ar[ll]_{d_0}&
	& 
	F_2 \ar[ll]_{d_1}&
	& 
	F_3 \ar[ll]_{d_2}& 
	\ar[l] \cdots \\
	    }
	    \]
that realizes the above algebraic resolution. 
\end{lemma}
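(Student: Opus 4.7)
The plan is to construct $F_*$ inductively, one stage at a time. The key tool is the highlighted property of free $R$-modules from the conventions section: an $R$-module map out of a free $R$-module is determined up to homotopy by its effect on $\pi_*$, and is null-homotopic whenever it vanishes on $\pi_*$. This converts the purely algebraic bookkeeping of extending a free resolution into topology one wedge of sphere $R$-modules at a time.

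For the base case, pick a free $\pi_*R$-basis $\{e_\alpha\}$ for $\F_0$ with $|e_\alpha|=n_\alpha$, let $F_0 := \bigvee_\alpha S^{n_\alpha}_R$ so that $\pi_*F_0 \cong \F_0$ canonically, and define $\varepsilon : F_0 \to A$ as the wedge of $R$-module maps $S^{n_\alpha}_R \to A$ representing $\varepsilon(e_\alpha) \in \pi_{n_\alpha}A$. For the inductive step, assume we have built $A \llla{\varepsilon} F_0 \llla{d_0} \cdots \llla{d_{n-1}} F_n$ with each $F_i$ a wedge of sphere $R$-modules realizing $\F_i$ and inducing the algebraic complex through $F_n$ on homotopy. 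Choose a free basis $\{f_\beta\}$ for $\F_{n+1}$ with $|f_\beta| = m_\beta$, set $F_{n+1} := \bigvee_\beta S^{m_\beta}_R$, and take $d_n : F_{n+1} \to F_n$ to be the wedge of maps representing the classes $d_n(f_\beta) \in \pi_{m_\beta}F_n$; this realizes the algebraic $d_n$ on homotopy.

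Applying $\pi_*$ to the resulting diagram returns exactly the given algebraic resolution, which is exact by hypothesis, so the topological diagram is a free resolution of $R$-modules in the sense of the preceding definition. I do not anticipate a substantive obstacle. The only subtle point is that the compositions $d_{n-1} \circ d_n$ need not vanish on the nose, but they vanish on $\pi_*$ because the algebraic data is a complex, and the paper's definition of complex of $R$-modules requires only that the induced $\pi_*$-diagram be a chain complex; should strict vanishing be desired, freeness of $F_{n+1}$ promotes each homotopy-zero composite to a genuine null-homotopy, which can be used to rectify the composite to the zero map.
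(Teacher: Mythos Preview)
Your proof is correct and follows essentially the same approach as the paper's own proof, which is terse: construct each $F_i$ as the evident wedge of sphere $R$-modules, then use that $R$-module maps out of free $R$-modules exist whenever the corresponding maps exist on homotopy. Your version is simply a more explicit unwinding of this, and your remark about $d_{n-1}\circ d_n$ only needing to vanish on $\pi_*$ matches precisely the paper's definition of a complex of $R$-modules and its subsequent comment that $\pi_*(d^2)=0$ and $d^2\we 0$ coincide in the free case.
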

\begin{proof}
First, we construct projective $R$-modules $F_i$ such that $\pi_*F_i=\F_i$ which is straightforward.
This can be done using that projective $R$-module spectra are summands of free $R$-module spectra.
To construct maps between the projective $R$-modules $F_i$ we use the fact that maps out of free $R$-modules exist whenever they exist in the homotopy category and then use the structure maps coming from the fact that the projective modules are summands of the free modules.
\end{proof}
The proof above will be indicative of later statements.
The proof in the case of free $R$-modules will imply the result in the case of projective modules by using the structure maps of the projective module in question as a summand of a free module.

Note that while we do not claim that an exact complex of $R$-modules is one such that $d^2$ is null-homotopic but that $\pi_*(d^2)=0$.
These two conditions do coincide when the complex is of projective $R$-modules.

\subsection{Filtration to Resolution and back again}
\label{subsec:filtration to resolution and back again}
Every filtration has a corresponding complex constructed as the ``associated graded'' of the filtration.
However, it is not possible to construct a filtration for any given complex.
In case that complex is a projective resolution, we can construct an associated filtration.
The associated filtration will be projective and exact, and have the resolution we began with as its associated graded complex.
The assumption that a filtration or complex be both projective and exact is a sufficient condition in order to have a correspondence between filtrations and complexes.
The only property of projective modules that we use is that a map out of a projective module is null-homotopic when it induces the 0 map in homotopy.
In section \ref{sec:Comparison Theorem}, we will extend this correspondence to morphisms.

Here we construct the associated graded complex of a filtration.
\begin{prop}
Given a projective and exact filtration  

\[
	\xymatrix{
	A_0 \ar[r]^{\alpha_0} \ar[drrr]&
	A_1 \ar[r]^{\alpha_1} \ar[drr]&
	A_2 \ar[r]^{\alpha_2} \ar[dr]&
	\cdots \ar[r] \ar[d]&
	\colim(A_i) \ar[dl]\\
	&
	&
	&
	A&
	\\
	}
	\]
there is an associated projective resolution
\[
	\xymatrix{
	A&
	& 
	F_0 \ar@{->>}[ll]_\varepsilon&
	& 
	F_1 \ar[ll]_{d_0}&
	& 
	F_2 \ar[ll]_{d_1}&
	& 
	F_3 \ar[ll]_{d_2}& 
	\ar[l] \cdots \\
	    }
	    \]
where $\Cof(A_{i-1} \lra A_i)=\sus^i F_i$.
\end{prop}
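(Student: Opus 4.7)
The plan is to build the resolution directly from the filtration data, using the cofiber sequences it provides. Since $A_{-1} = *$, we have $A_0 \we \sus^0 F_0 = F_0$, which is free by hypothesis; I would set $F_0 := A_0$ and take $\varepsilon := j_0 : A_0 \lra A$. For $i \geq 1$, let $\sus^i F_i := \Cof(\alpha_{i-1})$, free by assumption. Define the differential $d_i : F_{i+1} \lra F_i$ by desuspending the composite
\[
\sus^{i+1} F_{i+1} \llra{\partial} \sus A_i \llra{\sus q_i} \sus \sus^i F_i = \sus^{i+1} F_i,
\]
where $\partial$ is the connecting map of the cofiber sequence $A_i \to A_{i+1} \to \sus^{i+1} F_{i+1}$ and $q_i : A_i \to \sus^i F_i$ is the quotient.

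The chain relation $d_{i-1} \circ d_i \we 0$ already holds at the spectrum level: the composite defining it contains the consecutive pair $A_i \llra{q_i} \sus^i F_i \llra{\partial} \sus A_{i-1}$ drawn from the cofiber sequence $A_{i-1} \to A_i \to \sus^i F_i \to \sus A_{i-1}$, which is null; suspending and pre/postcomposing preserves nullity.

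The substantive step is exactness on $\pi_*$. Applying the octahedral axiom to $A_{i-1} \to A_i \to A$ produces a cofiber sequence
\[
\sus^{i-1} K_{i-1} \llra{\pi_{i-1}} \sus^i K_i \lra \sus^i F_i.
\]
The exactness hypothesis, i.e.\ that each $\pi_j$ vanishes in homotopy, then splits the associated long exact sequence into short exact sequences
\[
0 \lra \pi_* \sus^i K_i \lra \pi_* \sus^i F_i \lra \pi_{*-1} \sus^{i-1} K_{i-1} \lra 0.
\]
A careful comparison with the long exact sequence of $A_i \to A_{i+1} \to \sus^{i+1} F_{i+1}$ shows that on $\pi_*$ the differential $d_i$ factors as the surjection in this SES (for $F_{i+1}$) followed by the inclusion in the SES (for $F_i$); here I use that $\mathrm{im}(\partial)$ equals $\ker(\pi_{*-1} A_i \to \pi_{*-1} A_{i+1})$, and that under exactness this kernel coincides with $\pi_{*-1} \sus^i K_i$. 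Consequently both $\ker(\pi_* d_{i-1})$ and $\mathrm{im}(\pi_* d_i)$ are precisely the submodule $\pi_* \sus^i K_i \subset \pi_* \sus^i F_i$, giving exactness at $F_i$ for $i \geq 1$. Surjectivity of $\varepsilon$ on $\pi_*$ is the $i = 0$ case: the long exact sequence of the fiber sequence $K_0 \to A_0 \to A$, combined with the vanishing of $\sus^{-1} A \to K_0$ (Lemma \ref{lem:partial=0}), gives it directly.

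The main obstacle is this last identification: the construction of $F_i$ and $d_i$ is forced, and $d^2 \we 0$ is essentially automatic, but extracting exactness requires the interplay between the SES coming from the exact filtration hypothesis and the long exact sequence of $A_i \to A_{i+1} \to \sus^{i+1} F_{i+1}$, together with identifying the induced maps $\pi_* \sus^i K_i \hookrightarrow \pi_* \sus^i F_i$ and $\partial : \pi_* \sus^{i+1} F_{i+1} \twoheadrightarrow \pi_{*-1} \sus^i K_i$ correctly on both sides of the factorization of $d_i$.
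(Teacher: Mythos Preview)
Your argument is correct and matches the paper's: both use the octahedral (Verdier) axiom on $A_{i-1}\to A_i\to A$ to produce cofiber sequences $K_i\to F_i\to K_{i-1}$, observe that the exactness hypothesis ($\pi_i=0$ on homotopy) breaks the associated long exact sequences in $\pi_*$ into short exact sequences $0\to\K_i\to\F_i\to\K_{i-1}\to 0$, and splice these to obtain the resolution. The paper splices on $\pi_*$ first and then realizes the differentials using freeness of the $F_i$, whereas you write down $d_i$ at the spectrum level as $(\sus q_i)\circ\partial$ and then verify the factorization through $\pi_*\sus^iK_i$ afterward; this is only a presentational difference, and your identification $\ker(\pi_*\alpha_i)=\pi_*\sus^iK_i$ is exactly the place where the exactness hypothesis enters.
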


\begin{proof}
To construct the desired resolution, we first need to determine the cofiber of $\sus^{i} K_{i} \lra \sus^{i+1} K_{i+1}$, where $\sus^i K_i:= Fiber(A_i \sr {j_i} \lra A)$.
We make use of Verdier's axiom to identify this cofiber.
\[
	\xymatrix{
	A_i \ar@/^1pc/[rr]_{j_i} \ar[dr]_{\alpha_i}&
	& 
	A \ar@/^1pc/[rr]_{\partial_{i+1}} \ar[dr]^{\partial_i}&
	& 
	\sus^{i+2} K_{i+1} 
	\\
	&
	A_{i+1} \ar[dr]^{\beta_{i+1}} \ar[ur]_{j_{i+1}}& 
	&
	\sus^{i+1} K_i \ar[dr]^{q_i} \ar[ur]_{\pi_{i+1}}&
	\\
	\sus^{i+1} K_{i+1} \ar@/_1pc/[rr]^{\iota_{i+1}} \ar[ur]_{q_{i+1}}& 
	& 
	\sus^{i+1} F_{i+1} \ar[ur]_{p_{i+1}} \ar@/_1pc/[rr]& 
	& 
	\sus A_i 
}
\]
The cofiber sequences $\sus^{i+1} K_{i+1} \lra \sus^{i+1} F_{i+1} \lra \sus^{i+1} K_i$ induce short exact sequences in homotopy as $\sus^{i+1} K_i \lra \sus^{i+2} K_{i+1}$ is 0 in homotopy as the filtration is assumed to be exact. 
Now we splice together the short exact sequences $0 \lra \K_i \lra \F_i \lra \K_{i-1} \lra 0$ in the usual way to get 
	\[
	\xymatrix{
	A_*&
	& 
	\F_0 \ar@{->>}[ll]&
	& 
	\F_1 \ar@{->>}[dl] \ar[ll]&
	& 
	\F_2 \ar@{->>}[dl] \ar[ll]&
	& 
	\F_3 \ar@{->>}[dl] \ar[ll]& 
	\ar[l] \cdots \\
	&
	&
	& 
	\K_0 \ar@{>->}[ul]&
	&
	\K_1 \ar@{>->}[ul]&
	&
	\K_2 \ar@{>->}[ul]&
	   \\ }
	    \]               
an $R_*$-projective resolution of $A_*:=\pi_*{A}$, where $\F_i:= \pi_*(F_i)$ and $\K_i:= \pi_*(K_i)$.
As each $F_i$ is a projective $R$-module, this is enough to construct the desired projective resolution in $R$-modules.
\end{proof}

We now have the following realization result which will be used in the construction of the KSS.
It is equivalent to the construction in \cite{EKMM}.
\begin{prop}
\label{prop:resolution to filtration}
Given a projective resolution of $\pi_*A$ by $\pi_*R$-modules 
\[
	\xymatrix{
	A_*&
	& 
	\F_0 \ar@{->>}[ll]_\varepsilon&
	& 
	\F_1 \ar[ll]_{d_0}&
	& 
	\F_2 \ar[ll]_{d_1}&
	& 
	\F_3 \ar[ll]_{d_2}& 
	\ar[l] \cdots
}
\]
there is an exhaustive, exact, and projective filtration of $A$ by $R$-modules $A_i$
\[
	\xymatrix{
	A_{-1}=* \ar[r]^{\alpha_{-1}} \ar[drrrr]&
	A_0 \ar[r]^{\alpha_0} \ar[drrr]&
	A_1 \ar[r]^{\alpha_1} \ar[drr]&
	A_2 \ar[r]^{\alpha_2} \ar[dr]&
	\cdots \ar[r] \ar[d]&
	colim(A_i) \ar[dl]\\
	&
	&
	&
	&
	A&
}
\]
such that $\sus^i \F_i=\pi_* \Cof(A_{i-1} \to A_i)$. 	
\end{prop}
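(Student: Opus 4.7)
The plan is to build the filtration stage by stage, attaching free cells prescribed by the given resolution. First, by the preceding lemma, I realize the algebraic free resolution at the level of $R$-modules, obtaining free $R$-modules $F_i$ and $R$-module maps $\varepsilon: F_0 \to A$ and $\widetilde{d}_i: F_{i+1} \to F_i$ realizing it on homotopy. Setting $A_{-1}:=*$ and $A_0:=F_0$ with $j_0:=\varepsilon$, the fiber $\sus^0 K_0 := \mathrm{Fib}(j_0)$ satisfies $\pi_* K_0 = \ker\varepsilon = \K_0$ in the notation of the splicing.

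For the inductive step, suppose I have constructed $A_{-1} \to A_0 \to \cdots \to A_i$ with each $\Cof(A_{j-1}\to A_j) \simeq \sus^j F_j$ together with $j_i: A_i \to A$ such that $\mathrm{Fib}(j_i) \simeq \sus^i K_i$ and $\pi_* K_i = \K_i$. Because $F_{i+1}$ is free, the surjection $\F_{i+1} \twoheadrightarrow \K_i$ arising from the algebraic splicing lifts to an $R$-module map $\sus^i F_{i+1} \to \sus^i K_i$, and I take its composite with the inclusion $\sus^i K_i \to A_i$ as the attaching map for the next stage. Define $A_{i+1}:=\Cof(\sus^i F_{i+1}\to A_i)$, so that $\Cof(A_i\to A_{i+1})\simeq \sus^{i+1} F_{i+1}$, securing freeness. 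The composite $\sus^i F_{i+1} \to A_i \to A$ is null by construction (it factors through $\mathrm{Fib}(j_i)$), and freeness of $F_{i+1}$ promotes this null homotopy to an extension $j_{i+1}: A_{i+1} \to A$ of $j_i$.

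It remains to identify $\mathrm{Fib}(j_{i+1})$. Applying Verdier's axiom to the composite $\sus^i F_{i+1} \to \sus^i K_i \to A_i$, together with the short exact sequence $0\to\K_{i+1}\to\F_{i+1}\to\K_i\to 0$ from the splicing, yields $\mathrm{Fib}(j_{i+1})\simeq \sus^{i+1} K_{i+1}$ with $\pi_*K_{i+1}=\K_{i+1}$, closing the induction. Exactness is automatic, since the connecting map $\sus^i K_i \to \sus^{i+1} K_{i+1}$ factors through the cone on $\sus^i F_{i+1}$ and therefore vanishes on homotopy; exhaustiveness then follows from Lemma \ref{lem:partial=0}. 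The main obstacle is the bookkeeping in the inductive step: choosing attaching maps that factor through the correct fibers and verifying that the Verdier identification reproduces exactly the $K_{i+1}$ predicted by the splicing. Once these are arranged, the remainder is a routine use of the fact that $R$-module maps out of free $R$-modules are controlled by their effect on homotopy.
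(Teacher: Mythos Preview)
Your approach is correct and essentially the same as the paper's, though organized dually: the paper first builds the tower of fibers $K_i$ (defining $K_{i+1}$ as the fiber of $F_{i+1}\to K_i$), then defines $A_i$ as $\Cof(\Sigma^{-1}A\to\Sigma^i K_i)$ and uses Verdier's axiom to verify freeness of the filtration quotients; you instead build the $A_i$ directly by cell attachment (so freeness is immediate) and use Verdier to identify the fibers $K_{i+1}$. Both arguments hinge on the same ingredients---realizing the resolution in spectra, the splicing SESs, and an octahedral identification---so this is a reorganization rather than a different route.

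Two small points to tighten. First, your justification for exactness is not quite right as stated: the map $\Sigma^i K_i\to\Sigma^{i+1}K_{i+1}$ does not vanish on homotopy because it ``factors through a cone''; rather, it is the boundary map of the fiber sequence $K_{i+1}\to F_{i+1}\to K_i$, and it vanishes on homotopy precisely because $\F_{i+1}\to\K_i$ is surjective (this is the content of the splicing SES you already invoked). Second, the paper explicitly checks that each $\alpha_i:A_i\to A_{i+1}$ is a cofibration (by exhibiting $A_{i+1}$ as a pushout along $\Sigma^i F_{i+1}\hookrightarrow C(\Sigma^i F_{i+1})$), which is part of the definition of a filtered $R$-module; you should note this as well, though your cell-attachment description makes it transparent.
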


\begin{proof}
The resolution can be factored into SESs of $R_*$-modules 
\[
0 \lla \K_{i-1} \sr{p_{i-1}} \lla \F_{i} \sr{\iota_i} \lla \K_i \lla 0.
\]
\[
	\xymatrix{
	A_*&
	& 
	\F_0 \ar@{->>}[ll]_\varepsilon&
	& 
	\F_1 \ar@{->>}[dl]_{p_0} \ar[ll]_{d_0}&
	& 
	\F_2 \ar@{->>}[dl]_{p_1} \ar[ll]_{d_1}&
	& 
	\F_3 \ar@{->>}[dl]_{p_2} \ar[ll]_{d_2}& 
	\ar[l] \cdots \\
	&
	&
	& 
	\K_0 \ar@{>->}[ul]_{\iota_0}&
	&
	\K_1 \ar@{>->}[ul]_{\iota_1}&
	&
	\K_2 \ar@{>->}[ul]_{\iota_2}&
}
\]               
We can realize this projective $R_*$ resolution geometrically as a resolution by projective $R$-modules $F_i$ as in \cite{BakerLazarev} and Chapter 4 section 5 of \cite{EKMM}.
Here, define $F_i$ to be the appropriate wedge of $S^n_R$'s so that $\pi_*(F_i)=\F_i$ as an $R_*$-module. 
We define $K_0$ to be the fiber of the map $F_0 \sr{\varepsilon} \lra A$. 

Since the composite 
\[
A \sr{\varepsilon}\lla F_0 \sr{d_0}\lla F_1
\]
is 0 in homotopy it is null-homotopic and we can lift $d_0$ over $\iota_0$ to get the following diagram.
\[
\xymatrix{
&
&
&
F_1 \ar[dl]_{d_0} \ar@{-->}[dr]^{p_0}&
&
&
\\
A&
&
F_0\ar[ll]_{\varepsilon}&
&
K_0\ar[ll]_{\iota_0}&
&
\sus^{-1} A \ar[ll]
}
\]
Now, define $K_1$ to be the fiber of $F_1 \sr{p_0}\lra K_0$. 
That $p_0$ is well defined follows from the projectivity of $F_1$, suppose we had a different map that had the same effect in homotopy, then their difference would be 0 in homotopy and hence null-homotopic.
Next, we repeat this process inductively. 
As $\F_{i-1} \lla \K_{i-1}$ is always an injection, 
\[
\K_{i-1} \lla \F_i \lla \F_{i+1}
\]
is zero since 
\[
\F_{i-1} \lla \F_i \lla \F_{i+1}
\] 
is zero as well. 
Therefore, 
\[
K_{i+1} \lla F_i \lla F_{i+1}
\]
is null-homotopic, and we proceed as above.
This gives us cofiber sequences 
\[
K_{i-1} \sr{p_{i-1}} \lla F_i \sr{\iota_i} \lla K_i
\]
which induce short exact sequences in homotopy.
This implies that the boundary maps $K_{i-1} \sr{\pi_i} \lra \sus K_{i}$ are zero in homotopy.
The factorization
	\[
	\xymatrix{
	\F_i&
	&
	\F_{i+1} \ar[ll]_{d_i} \ar[dl]_{p_i}
	\\
	&
	\K_i \ar[ul]_{\iota_i}
}
\] 
is realized geometrically by
	\[
	\xymatrix{
	F_i&
	&
	F_{i+1} \ar[ll]_{d_i} \ar[dl]_{p_i}
	\\
	&
	K_i \ar[ul]_{\iota_i}
}
\]
since $\pi_*(K_i) \iso \K_i \iso Ker(d_{i-1})\iso Ker(p_{i-1})$. 
$\ $We define $A_i$ to be the cofiber of the map $\sus^{-1}A \sr{\partial_i}\lra \sus^{i} K_i$.
This is the composite of the boundary $\sus^{-1}A \sr{\pi_0} \lra K_0$ (the initial fiber sequence above) with $i$ composites of the $j$-th suspensions of the $i$ boundary maps $\sus^{j-1} K_{j-1} \sr{\pi_j} \lra \sus^j K_j$. 
These give cofiber sequences $\sus^{-1}A \sr{\pi_{i,0}} \lra \sus^i K_i \sr{q_i} \lra A_i$ or rather $\sus^i K_i \sr{q_i} \lra A_i \sr{j_i} \lra A$.
As we already know that the boundary maps $K_{i-1} \sr{\pi_i} \lra \sus K_{i}$ are zero in homotopy, we see that the filtration $\Ab$ is exact.

The following diagram and summary may help in visualizing what is going on.
The top row is the lift of the algebraic resolution of $A_*$ by projective $R_*$ modules to a resolution of $A$ by projective $R$-modules.
Then there is the ``epi-mono'' factorization of this resolution is then the zig zag featuring $F_i$'s and $K_i$'s.
We rotate these cofiber sequences and obtain a tower of $\sus^iK_i$'s from which we obtain our desired filtration.
This filtration $\Ab$ is obtained by taking the levelwise cofiber of the map of towers $\sus^{-1}A \to \sus^{\bu}K_{\bu}$ where the domain is the constant tower.
	\[
	\xymatrix{
	A&
	&
	&
	&
	F_0=A_0 \ar[llll]_{\varepsilon=j_0} \ar[dl]_{\alpha_0}&
	&
	F_1 \ar[dl]_{p_1} \ar[ll]_{d_0}&
	\cdots \ar[l]_{d_1} \\
	&
	&
	&
	A_1 \ar[ulll]_{j_1} \ar[dl]_{\alpha_1} &
	&
	K_0 \ar[ul]_{\iota_0} \ar[dl]_{\pi_1} &
	&
	K_1 \ar[ul]_{\iota_1}	\cdots \\
	&
	&
	A_2 \ar[uull]_{j_2} \ar[dl]_{\alpha_2}&
	&
	\sus K_1 \ar[dl]_{\pi_2} \ar[ul]_{q_1}&
	&
	\sus^{-1}A \ar[ll]_{\pi_{1,0}} \ar[ul]_{\pi_0} \ar[dlll]_{\pi_{2,0}} \ar[ddllll]^{\pi_{3,0}}&
	\cdots \\
	&
	A_3 \ar[uuul]_{j_3}&
	&
	\sus^2 K_2 \ar[ul]_{q_2} \ar[dl]_{\pi_3}&
	&
	&
	&
	\cdots \\
	&
	&
	\sus^3 K_3 \ar[ul]_{q_3}&
	&
	&
	&
	&
	&
}
\]               
To see that this filtration $\Ab$ is also projective, we compute $\Cof(A_{i-1} \sr{\alpha_{i-1}} \lra A_i)$ using the Verdier braid diagram that analyzes the composition 
\[
\partial_i : \sus^{-1}A \sr{\pi_{i-1,0}}\lra \sus^{i-1} K_{i-1} \sr{\pi_i}\lra \sus^i K_i.
\]

	\[
	\xymatrix{
	\sus^{-1}A \ar@/^1pc/[rr]_{\partial_i} \ar[dr]^{\pi_{i-1,0}}&
	& 
	\sus^i K_i \ar@/^1pc/[rr]_{\iota_i} \ar[dr]^{q_i}&
	& 
	\sus^i F_i 
	\\
	&
	\sus^{i-1} K_{i-1} \ar[dr]^{q_{i-1}} \ar[ur]_{\pi_i}& 
	&
	A_i \ar[dr]^{j_i} \ar[ur]_{\beta_i}&
	\\
	\sus^{i-1} F_i \ar@/_1pc/[rr]^{q_{i-1}\circ p_i} \ar[ur]_{p_i}& 
	& 
	A_{i-1} \ar[ur]_{\alpha_{i-1}} \ar@/_1pc/[rr]^{j_{i-1}}& 
	& 
	A 
}
\]

Thus the filtration is exact and projective as claimed.

We also want for the maps $A_i \to A_{i+1}$ to be cofibrations.
To see this note that $A_{i+1}$ is the pushout of
\[
\xymatrix{
\sus^i F_{i+1} \ar[rr]^{q_i\circ p_{i+1}} \ar@{^{(}->}[dd]&
&
A_i\\
&
&
\\
\sus^iC(F_{i+1})
}
\]
therefore the map $A_i \to A_{i+1}$ is a cofibration.
\end{proof}

This is the general construction of a filtration.
It is carried out in more detail and with a bit of a different focus in \cite{EKMM}.

\subsection{Monoidal Structures}
\label{subsec:monoidal structures}
In this subsection, we define various multiplicative structures in filtered $R$-modules.
All of our definitions will induce the familiar structure in complexes of $R$-modules.
The goal of these constructions is to provide our spectral sequences with multiplicative structures and power operations whenever the relevant filtration is multiplicative or is an $\Hi$-filtration, respectively.
We first define a monoidal structure on filtered $R$-modules and then proceed to $\Hi$-filtrations.
Our definition of $\Hi$-filtration is as an $\Hi$-object in filtrations as opposed to a filtration by $\Hi$-objects in the underlying category.
The latter leads to a different theory.
For earlier examples of the former, see the work of Bruner \cite{BrunerThesis}, and Hackney \cite{HackneyOpsInfLoopSpace}.
Note that we use a potentially unconventional ``union'' notation for iterated pushouts in $R$-modules.

\begin{dfn}
The \underline{smash product of two filtrations} $\Xb$ and $\Yb$ is denoted by $\Gb(\Xb,\Yb)$.
The $n$th term in the filtration is 
\[
\Gamma_n(\Xb,\Yb):=\bigcup_{i+j=n}X_i\wedge Y_j=X_0 \wedge Y_n \cup_{X_0\wedge Y_{n-1}} X_1 \wedge Y_{n-1} \cup \ldots \cup_{X_{n-1}\wedge Y_0} X_n \wedge Y_0.
\]
We will also denote iterated smash products of $r$-filtrations $\Xb^1,\Xb^2,\ldots,\Xb^r$ by $\Gamma^r(\Xb^1,\Xb^2,\ldots,\Xb^r)_{\bu}$.
Here, the $n$th term in the filtration is
\[
\Gamma^r_n(\Xb^1,\Xb^2,\ldots,\Xb^r):=\bigcup_{\sum_{l=1}^r\alpha_l=n} X^1_{\alpha_1} \wedge X^2_{\alpha_2}\wedge \ldots \wedge X^r_{\alpha_r}.
\]
If all filtrations are the same, we will use the symbol $\Gb^r$ or simply $\Gb$ when $r=2$.
\end{dfn}

\begin{rmk}
We will use relative smash products of $R$-modules.
We will use $\Gb'$ to denote the relative construction where every occurrence of $-\wedge -$ is replaced by $\wedge_R-$ when $R$ is understood from context, otherwise we will use the notation $\Gb^R$.
This will only come up when $R$ is a commutative $S$-algebra. 
\end{rmk}

The above definition allows us to consider monoid objects in the category of filtered modules, in exact analogy with DGA's being monoids in chain complexes.
In fact, the definition of a (strictly) multiplicative filtration is precisely what is necessary to ensure that the associated graded complex of a filtration is a DGA (in spectra) as we will see in Lemma \ref{lemma:smash product of filtrations}.

\begin{dfn}
We say that a filtration $\Xb$ is \underline{multiplicative} if there is a map of filtrations
\[
\xymatrix{
\Gb \ar[rr]_{\mu_{\bu}}&
&
\Xb.
}
\]
In particular, we require maps $\mu_n: \Gamma_n \to X_n$ such that
\[
\xymatrix{
\Gamma_{n-1} \ar[rr] \ar[dd]_{\mu_{n-1}}&
&
\Gamma_n \ar[dd]^{\mu_n}\\
&
&
\\
X_{n-1}\ar[rr]&
&
X_n
}
\]
commutes.
We also require that $\mu_{\bu}$ satisfy the obvious associativity condition up to homotopy.
If the map $\mu_{\bu}$ satisfies the associativity condition on the nose, then we call it strictly multiplicative.
\end{dfn}
The associativity condition is not unreasonable as there is a natural equivalence 
\[
\Gb(\Xb,\Gb(\Yb,Z_{\bu}))\iso\Gb^3(\Xb,\Yb,Z_{\bu})\iso \Gb(\Gb(\Xb,\Yb),Z_{\bu})
\]
which is induced by the associativity of the smash product in the category of $S$-modules.
If $A$ is an $R$-algebra then the constant filtration $c(A)_{\bu}$ is strictly multiplicative.
If a filtration is multiplicative then the (homotopy) colimit of the filtration has a product that is compatible with the filtration.
This follows from the fact that $\colim_{\bu} \Gb(\Xb,\Yb) \we (\colim_{\bu}\Xb)\wedge (\colim_{\bu} \Yb)$. 
The reason for only requiring a weak form of associativity is that we will produce such maps of filtrations by a general construction that produces maps uniquely only up to homotopy and that our applications do not require strictness.
This will be spelled out in the proof of \ref{cor:mult}.

\begin{lemma}
If $\Xb$ and $\Yb$ are two multiplicative filtrations then $\Gb(\Xb,\Yb)$ is multiplicative. 
\end{lemma}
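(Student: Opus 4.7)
The plan is to build the multiplication on $\Gb(\Xb,\Yb)$ as a middle-twist followed by the smash product of the two given multiplications. Unwinding the definition of the $n$th term in an iterated $\Gamma$, there is a natural identification
\[
\Gamma_n(\Gb(\Xb,\Yb),\Gb(\Xb,\Yb)) \;\cong\; \bigcup_{a+b+c+d=n} X_a \wedge Y_b \wedge X_c \wedge Y_d,
\]
and similarly $\Gamma_n(\Gb(\Xb,\Xb),\Gb(\Yb,\Yb)) \cong \bigcup_{a+c+b+d=n} X_a \wedge X_c \wedge Y_b \wedge Y_d$. The symmetry isomorphism $\tau$ of the smash product of $R$-modules supplies, on each summand, the middle twist $1 \wedge \tau \wedge 1$.

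Because $\tau$ is natural with respect to the cofibrations $X_{i-1}\hookrightarrow X_i$ and $Y_{j-1}\hookrightarrow Y_j$, these summand-wise maps assemble to a well-defined map out of the iterated pushout and commute with the filtration inclusions $\Gamma_{n-1}\hookrightarrow \Gamma_n$, producing a map of filtrations
\[
\sigma_{\bu}\colon \Gb(\Gb(\Xb,\Yb),\Gb(\Xb,\Yb)) \lra \Gb(\Gb(\Xb,\Xb),\Gb(\Yb,\Yb)).
\]
Applying the bifunctor $\Gb(-,-)$ to the pair of filtration morphisms $\mu^X_{\bu}$ and $\mu^Y_{\bu}$ yields another map of filtrations $\Gb(\mu^X_{\bu},\mu^Y_{\bu})\colon \Gb(\Gb(\Xb,\Xb),\Gb(\Yb,\Yb)) \to \Gb(\Xb,\Yb)$. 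The desired multiplication is then the composite
\[
\mu^{\Gamma}_{\bu} \;:=\; \Gb(\mu^X_{\bu},\mu^Y_{\bu}) \circ \sigma_{\bu}\colon \Gb(\Gb(\Xb,\Yb),\Gb(\Xb,\Yb)) \lra \Gb(\Xb,\Yb).
\]

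Compatibility of $\mu^{\Gamma}_{\bu}$ with the filtration inclusions is automatic from the corresponding property for each of $\mu^X_{\bu}$, $\mu^Y_{\bu}$, and $\sigma_{\bu}$. Weak associativity of $\mu^{\Gamma}_{\bu}$ is inherited by running the two given associativity homotopies in parallel, combined with the hexagon coherence for $\tau$ in the symmetric monoidal category of $R$-modules. There is no real obstacle: the one bit of work beyond formal manipulation is checking that the summand-wise middle twist descends to the iterated pushout defining $\Gamma_n$, which is precisely where the naturality of $\tau$ is used.
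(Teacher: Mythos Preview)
Your argument is correct and is exactly the approach the paper intends: the paper states the lemma without proof, noting only that ``we use that the category of $S$-modules is symmetric monoidal,'' which is precisely your use of the twist $\tau$ to build the middle-twist $\sigma_{\bu}$ and then compose with $\Gb(\mu^X_{\bu},\mu^Y_{\bu})$. You have spelled out what the paper leaves implicit, including the check that naturality of $\tau$ makes the summand-wise twist descend to the iterated pushout.
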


Here we use that the category of $S$-modules is symmetric monoidal and so the related result for $\Gamma^{R}(\Xb,\Yb)$ holds when $R$ is a commutative $S$-algebra.
This result will only be applied in the case that $\Yb$ is a constant filtration of a commutative $S$-algebra.

Notice that there is a natural action of $\pi \subset \Sigma_r$ on the filtration $\Gb^r$ given by permuting factors.
This action, along with the tensoring of the category of $S$-modules over spaces, will be used in the definition of $\Hi$-filtration below.
When necessary, we consider the space $E\Sigma_n$ as a filtered spectrum in $S$-modules via its skeletal filtration.
The model we use for $E\Sigma_n$ as a CW-complex is that which is corresponds to the complex used by May in \cite{MaySteenrod}.

\begin{dfn}
\label{dfn:Hinfty filtration}
We say that a filtration $\Xb$ is \underline{$\Hi$} or has an \underline{$\Hi$-structure} if there are maps of filtrations
\[
\xi_r:\Gamma_{\bu h\Sigma_r}^r:=\Gamma_{\bu}(E\Sigma_{r+}^{(\bu)},\Gb^r(\Xb))_{\Sigma_r} \lra \Xb
\]
that are compatible in the sense of Chapter 1 definition 3.1 of \cite{HRS}.
In particular, we have maps 
\[
\xi_r^n:\Gamma_{n}(E\Sigma_{r+}^{(\bu)},\Gamma_{\bu}^r)_{\Sigma_r}=\bigcup_{i+j=n}E\Sigma^{(i)}_{r+}\wedge_{\Sigma_r} (\bigcup_{\sum_{l=1}^r\alpha_l=j} X_{\alpha_1} \wedge X_{\alpha_2}\wedge \ldots \wedge X_{\alpha_r}) \to X_n.
\]
Or more explicitly, we require 
\[
\xi_r^{n,m}:E\Sigma^{(m)}_{r+}\wedge_{\Sigma_r}(\Gamma_n^r) \to X_{n+m}.
\]
for for every $r,m,$ and $n$.
The $\xi_r^{n,m}$'s must be compatible in the sense that
\[
\xymatrix{
\displaystyle E\Sigma_{r+}^{(n)}\wedge_{\Sigma_r} \Gamma^r_{m-1} \bigcup_{E\Sigma_{r+}^{(n-1)}\wedge_{\Sigma_r} \Gamma^r_{m-1}} E\Sigma_{r+}^{(n-1)} \wedge_{\Sigma_r} \Gamma^r_m \ar[dd]_{\xi_r^{n,m-1}\cup_{\xi_r^{n-1,m-1}} \xi_r^{n-1,m}} \ar[rr]&
&
E\Sigma_{r+}^{(n)} \wedge_{\Sigma_r} \Gamma^r_m \ar[dd]^{\xi_r^{n,m}}\\
&
&
\\
X_{n+m-1} \ar[rr]&
&
X_{n+m}
\\
}
\]
commutes for all $r,n$ and $m$ up to homotopy.
\end{dfn}
We will frequently abbreviate $E\Sigma_{r+}^{(k)}\wedge_{\pi} \Gamma^r_n$ as $\tGamma^r_{k,n}$.
When we restrict to a subgroup $\pi \subset \Sigma_r$ we will use $\tGamma^{\pi}_{k,n}$ to denote the above construction with $\pi$ in place of $\Sigma_r$.
The above definitions are motivated by the following lemmas.
They were chosen so that the associated graded of the filtrations will be exactly the familiar object in chain complexes.
\begin{lemma}
\label{lemma:smash product of filtrations}
Let $\Xb \subset X$ and $\Yb \subset Y$ be bounded below cellular filtrations.
Let the associated graded complexes be denoted by $E^0(\Xb)=F_*$ and $E^0(\Yb)=G_*$.
Then, $\Gb(\Xb,\Yb)$ is a cellular filtration of $X \wedge Y$.
Now suppose that $\Xb$ and $\Yb$ are projective filtrations.
The associated graded complex of $\Gb(\Xb,\Yb)$ is the smash product of the two complexes; that is, 
\[
E^0(\Gb(\Xb,\Yb))=E^0(\Xb)\wedge E^0(\Yb)=F_*\wedge G_*
\]
where the $n$th term in the complex $F_*\wedge G_*$ is given by $\bigvee_{i+j=n}F_i \wedge G_j$.
Also, the associated graded complex of $\Gb'(\Xb,\Yb)$ is the relative smash product of the two complexes 
\[
E^0(\Gb'(\Xb,\Yb))=E^0(\Xb)\wedge_R E^0(\Yb)=F_*\wedge_R G_*
\]
where the $n$th term in the complex $F_*\wedge_R G_*$ is given by $\bigvee_{i+j=n}F_i \wedge_R G_j$.
\end{lemma}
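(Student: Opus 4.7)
The plan is to argue by induction on the filtration degree $n$, combining the pushout--product formula for cofibrations in the category of $R$--modules with the fact that the smash product commutes with (filtered) homotopy colimits. I will treat the absolute case $\Gb$; the relative case $\Gb'$ is identical after replacing $\wedge$ by $\wedge_R$, and is legitimate because $R$ is a commutative $S$--algebra so $(R\text{-mod}, \wedge_R)$ is a closed symmetric monoidal model category in which smashing with a cofibrant object preserves cofibrations.

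First, I would verify that each $\Gamma_{n-1}(\Xb,\Yb)\hra \Gamma_n(\Xb,\Yb)$ is a cofibration and that $\Gb(\Xb,\Yb)$ is exhaustive with colimit $X\wedge Y$. Building $\Gamma_n$ from $\Gamma_{n-1}$ amounts to iterating the pushouts
\[
\xymatrix{
(X_{i-1}\wedge Y_j)\cup_{X_{i-1}\wedge Y_{j-1}} (X_i\wedge Y_{j-1}) \ar@{^{(}->}[rr] \ar[d] & & X_i\wedge Y_j \ar[d] \\
\Gamma_n^{(\text{previous})} \ar@{^{(}->}[rr] & & \Gamma_n^{(\text{next})}
}
\]
for $i+j=n$, and each top horizontal map is the pushout--product of the cofibrations $X_{i-1}\hra X_i$ and $Y_{j-1}\hra Y_j$, which is again a cofibration. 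Exhaustiveness is immediate because
\[
\colim_n \Gamma_n(\Xb,\Yb) \;=\; \bigcup_{i,j} X_i\wedge Y_j \;\we\; \bigl(\hocolim_i X_i\bigr) \wedge \bigl(\hocolim_j Y_j\bigr) \;\we\; X\wedge Y,
\]
using that both filtrations are cellular and bounded below.

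Next, I would identify the associated graded in each degree. Applying the pushout--product formula to each corner $(i,j)$ with $i+j=n$ yields
\[
\Cof\Bigl((X_{i-1}\wedge Y_j)\cup_{X_{i-1}\wedge Y_{j-1}} (X_i\wedge Y_{j-1})\;\lra\; X_i\wedge Y_j\Bigr)\;\we\;\frac{X_i}{X_{i-1}}\wedge \frac{Y_j}{Y_{j-1}} \;=\;\sus^i F_i \wedge \sus^j G_j,
\]
and since these cells are attached to $\Gamma_{n-1}$ along disjoint subcomplexes (indexed by the distinct pairs $(i,j)$ with $i+j=n$), the overall cofiber is the wedge
\[
\Gamma_n(\Xb,\Yb)/\Gamma_{n-1}(\Xb,\Yb)\;\we\;\bigvee_{i+j=n}\sus^i F_i \wedge \sus^j G_j \;\we\;\sus^n \bigvee_{i+j=n} F_i\wedge G_j,
\]
which is precisely the $n$th term of $F_*\wedge G_*$ after accounting for the suspension convention $\sus^n(F_*\wedge G_*)_n = Cof(\Gamma_{n-1}\to\Gamma_n)$ that mirrors $\sus^n F_n = Cof(X_{n-1}\to X_n)$ for the individual filtrations.

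Finally, I would identify the differentials in the associated graded. The boundary map $\Gamma_n/\Gamma_{n-1} \to \sus \Gamma_{n-1}/\Gamma_{n-2}$ restricted to a summand $\sus^i F_i\wedge \sus^j G_j$ is, by the symmetric--monoidal naturality of the pushout--product, the sum of the boundaries coming from each factor, i.e. the Leibniz expression $d_{F_*}\wedge 1 \pm 1\wedge d_{G_*}$, with the sign produced by swapping the two suspension coordinates in the isomorphism $\sus^i F_i\wedge \sus^j G_j \iso \sus^{i+j}(F_i\wedge G_j)$. This is exactly the differential in $F_*\wedge G_*$.

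The main obstacle I anticipate is the careful bookkeeping in the inductive attaching step of Step 1, namely checking that $\Gamma_n$ really decomposes as a sequence of disjoint pushout--product attachments off of $\Gamma_{n-1}$ rather than overlapping in an uncontrolled way; this is where the assumption that each $(X_i, X_{i-1})$ and $(Y_j,Y_{j-1})$ is a relative cell $R$--module (i.e. the cellularity hypothesis) is essential, since it lets us choose compatible CW--structures so that the pushouts assemble strictly. Tracking the Koszul sign in Step 3 is a minor but notationally delicate subpoint of this bookkeeping.
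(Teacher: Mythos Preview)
The paper does not supply a proof of this lemma; it is stated without argument, as a standard fact motivating the definition of the smash product of filtrations. Your proposal is correct and is precisely the expected argument: the pushout--product axiom in the monoidal model category of $R$-modules gives that each stage is a cofibration, commutation of $\wedge$ with filtered colimits gives exhaustiveness, and the identity $\Cof(f\,\square\, g)\we \Cof(f)\wedge \Cof(g)$ identifies the associated graded.

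Your stated worry about ``disjointness'' of the attachments is not a genuine obstacle, and you do not need compatible CW structures to resolve it. For each $(i,j)$ with $i+j=n$, the pushout--product domain
\[
L_{i,j}=(X_{i-1}\wedge Y_j)\cup_{X_{i-1}\wedge Y_{j-1}}(X_i\wedge Y_{j-1})
\]
already lies in $\Gamma_{n-1}$, since both pieces have total filtration $n-1$. Moreover, for distinct $(i,j)\neq(i',j')$ on the diagonal, the overlap of $X_i\wedge Y_j$ and $X_{i'}\wedge Y_{j'}$ inside the colimit is $X_{\min(i,i')}\wedge Y_{\min(j,j')}$, which again lies in $\Gamma_{n-1}$. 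Hence all the cells with $i+j=n$ are attached to $\Gamma_{n-1}$ along maps that factor through $\Gamma_{n-1}$ and meet one another only inside $\Gamma_{n-1}$; the quotient $\Gamma_n/\Gamma_{n-1}$ is therefore the wedge you wrote down. The cellularity hypothesis is needed only so that all smash products and pushouts are already derived (everything is cofibrant), not for any finer combinatorial bookkeeping.
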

\begin{lemma}
\label{lem:geometric condition}
Let $\Xb\subset X$ be a bounded below cellular projective filtration with associated graded complex $E^0(\Xb)=F_*$.
We then have the following regarding the filtration $\widetilde{\Gamma}_{\bu}^{\pi}(\Xb)$ of $E\pi_+ \wedge_{\pi} X^{\wedge r}$ where 
\[
\widetilde{\Gamma}^{\pi}_m(\Xb):=\Gamma_m(E\pi^{\bu}_+,\Gb^r)=\cup_{k+n=m} \tGamma^{\pi}_{k,n}.
\]
\begin{itemize}
\item $\Gb^r$ and $\widetilde{\Gamma}_{\bu}^{\pi}(\Xb)$ are both bounded below cellular filtrations.
\item The associated graded complex of $\widetilde{\Gamma}^{\pi}_{\bu}(\Xb)$ is given by the smash products of the associated graded complexes of $E\pi^{(\bu)}_+$ and $\Gamma^r_{\bu}$.
In particular,
\[
\frac{\widetilde{\Gamma}^{\pi}_n(\Xb)}{\widetilde{\Gamma}^{\pi}_{n-1}(\Xb)} \we \bigvee_{i+j=n}  \frac{B\pi^{(i)}}{B\pi^{(i-1)}} \wedge (\bigvee_{\sum_{k=1}^r\alpha_k=j}(\bigwedge_{k=1}^r\sus^{\alpha_k}F_{\alpha_k}))
\]
where $B\pi^{(i)}=E\pi^{(i)}/\pi$.
\end{itemize}
\end{lemma}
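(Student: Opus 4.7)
The plan is to chain Lemma \ref{lemma:smash product of filtrations} with itself and then divide out by the free $\pi$-action at the end. First, by induction on $r$ and repeated application of Lemma \ref{lemma:smash product of filtrations}, the $r$-fold smash $\Gb^r(\Xb) = \Gb(\Xb, \Gb(\Xb, \ldots))$ is a bounded-below cellular filtration of $X^{\wedge r}$, and the associated graded complex is the $r$-fold smash $F_*^{\wedge r}$ of the complex $F_* = E^0(\Xb)$ with itself. Explicitly, the $j$-th subquotient is
\[
\frac{\Gamma^r_j}{\Gamma^r_{j-1}} \we \bigvee_{\sum_{k=1}^r \alpha_k = j} \bigwedge_{k=1}^r \sus^{\alpha_k} F_{\alpha_k},
\]
which is the ``inner'' factor in the formula we want. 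This settles the claim for $\Gb^r$, and it is $\pi$-equivariant for the diagonal permutation action when one keeps careful track of the order of smash factors.

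Second, I apply Lemma \ref{lemma:smash product of filtrations} one more time to the pair of filtrations $E\pi^{(\bu)}_+$ (the skeletal filtration of the contractible free $\pi$-CW complex $E\pi$, viewed as a bounded-below cellular filtration of $E\pi_+$) and $\Gb^r(\Xb)$. Since both are bounded below and cellular, the resulting bifiltered smash $\Gamma_{\bu}(E\pi^{(\bu)}_+, \Gb^r)$ is again bounded below and cellular, with associated graded
\[
\frac{\Gamma_n(E\pi^{(\bu)}_+,\Gb^r)}{\Gamma_{n-1}(E\pi^{(\bu)}_+,\Gb^r)} \we \bigvee_{i+j=n} \frac{E\pi^{(i)}_+}{E\pi^{(i-1)}_+} \wedge \frac{\Gamma^r_j}{\Gamma^r_{j-1}},
\]
now as a statement in pointed $\pi$-spectra with $\pi$ acting diagonally.

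Finally, I pass to $\pi$-orbits. Because $\pi$ acts freely on $E\pi^{(\bu)}$, the functor $E\pi^{(\bu)}_+ \wedge_\pi (-)$ preserves cofibrations and hence cofibres, so the filtration $\widetilde{\Gamma}^{\pi}_{\bu}(\Xb)$ is still a bounded-below cellular filtration and its subquotients are obtained by applying $(-)_\pi$ levelwise to the formula above. As a pointed $\pi$-space, $E\pi^{(i)}_+/E\pi^{(i-1)}_+$ is a wedge of copies of $\pi_+ \wedge S^i$ indexed by the $i$-cells of $B\pi$, so for any pointed $\pi$-spectrum $Y$ one has
\[
\Bigl(\frac{E\pi^{(i)}_+}{E\pi^{(i-1)}_+}\Bigr) \wedge_{\pi} Y \we \frac{B\pi^{(i)}}{B\pi^{(i-1)}} \wedge Y,
\]
the $\pi$-action on $Y$ being absorbed by the free orbits on the left-hand factor. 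Substituting $Y = \Gamma^r_j/\Gamma^r_{j-1}$ and inserting the formula from the first paragraph produces exactly the claimed wedge decomposition. The main technical point of the argument is the interchange in this last step: one must check that taking $\wedge_\pi$ commutes with forming the cofibre of $\widetilde{\Gamma}^{\pi}_{n-1} \hookrightarrow \widetilde{\Gamma}^{\pi}_n$, and this is precisely where freeness of the $\pi$-action on $E\pi$ is used; everything else is bookkeeping of wedge summands.
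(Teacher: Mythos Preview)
The paper states Lemma~\ref{lem:geometric condition} without proof; it is presented alongside Lemma~\ref{lemma:smash product of filtrations} as a motivation for the definitions of multiplicative and $\Hi$-filtrations, and the relevant cofibre identification reappears later (without further justification) inside the proof of Corollary~\ref{cor:Hinfty structure on filtration}. Your argument is correct and is almost certainly the one the author had in mind: iterate Lemma~\ref{lemma:smash product of filtrations} to handle $\Gb^r$, apply it once more with the skeletal filtration of $E\pi_+$, and then use freeness of the $\pi$-action on $E\pi$ to pass to orbits and replace $E\pi^{(i)}_+/E\pi^{(i-1)}_+$ by $B\pi^{(i)}/B\pi^{(i-1)}$. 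The only point worth flagging is that your second invocation of Lemma~\ref{lemma:smash product of filtrations} is really an equivariant analogue (you need the subquotient identification as $\pi$-spectra before taking orbits), but this is straightforward since the lemma's proof is formal and respects the evident $\pi$-actions.
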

One should compare the above to Lemma 5.1 of Chapter 4 on page 111 in \cite{HRS}.

As before, if $A$ is a commutative $R$-algebra then the filtration $c(A)_{\bu}$ is a genuinely commutative multiplicative filtration.
We use the term $\Hi$-filtrations as opposed to $\Ei$-filtrations because while we will be able to construct the relevant maps for an $\Ei$-structure we will only be able to show that the relevant diagrams commute up to homotopy.
In fact, since our notion depends on a filtration of the operad there could be multiple different notions that might deserve this name.
This will not surprise those familiar with the algebraic approach to Steenrod and Dyer-Lashof operations of Peter May in \cite{MaySteenrod}.
This inability to construct genuinely commuting diagrams is due to our tool for constructing such structures being the Comparison Theorem, Theorem \ref{thm: Comparison Theorem}, which only constructs a unique map up to homotopy.
This definition is equivalent to the one used by Bruner in \cite{HRS} and \cite{BrunerThesis}.
The apparent difference is due to the filtration happening in ``negative'' degrees in \cite{HRS} and so the skeletal filtration degree of $E\Sigma_n$ decreases the Adams filtration.

The existence of a monoidal structure also implies that the category of filtered modules is tensored over $R$-modules, and therefore any other category that $R$-modules happens to be tensored over.
In particular, this allows us to define a notion of homotopy.

\begin{defn}
\label{defn:homotopy of filtered modules}
 Let $f_{\bu},g_{\bu}:\Ab \lra \Bb$ be two maps of filtered $R$-modules.
 We call $H_{\bu}:I^R_{\bu}\wedge_R \Ab \lra \Bb$ a homotopy from $f_{\bu}$ to $g_{\bu}$ if the following diagram commutes.
 \[
  \xymatrix{
  \Gb'(c(R\wedge {0}_+), \Ab) \we \Ab \ar[drr] \ar@/^1pc/[drrrrr]^{f_{\bu}}&
  &
  &
  &
  &
  \\
  &
  &
  \Gb'(I_{\bu}^R, \Ab) \ar[rrr]^{H_{\bu}} \ar[rrr]&
  &
  &
  \Bb\\
  \Gb'(c(R\wedge {1}_+), \Ab) \we \Ab \ar[urr] \ar@/_1pc/[urrrrr]_{g_{\bu}}&
  &
  &
  &
  &
  }
 \]
Here, $I^R_{\bu}$ is the filtered $R$-module coming from the cellular structure on the standard unit interval.
In filtration $0$ we have that $I^R_0:=R\wedge \{0,1\}_+\we R\vee R$ where the first wedge summand comes from ${0}\subset I$ and the second from ${1} \subset I$.
In filtration $n$ we have that $I^R_{n}:= R \wedge I_+$ and the maps in the filtration are the obvious ones.
\end{defn}

Note that such a homotopy will induce a chain homotopy on homotopy groups of the associated graded complex of a filtration.
This notion of homotopy is also an equivalence relation, the standard proofs that rely on nice properties of the unit interval apply.

\section{Comparison Theorem}
\label{sec:Comparison Theorem}
In this section we state and prove our Comparison Theorem \ref{thm: Comparison Theorem} as well as provide some of its corollaries regarding multiplicative structures.
This result is a filtered version of the Comparison Theorem of homological algebra.
In fact, the Comparison Theorem of homological algebra can be recovered from our result by applying the ``associated graded complex" functor everywhere.
One can also view this result as a ``cellular approximation" type result.

\begin{thm}[Comparison Theorem]
\label{thm: Comparison Theorem}
Suppose that we have a map $f: Y \to A$ of $R$-modules, an exact filtration $A_{\bullet} \subset A$, and a projective and exhaustive filtration $Y_{\bullet} \subset Y$.
Also, suppose that there exist $f_{-1}: Y_{-1} \to A_{-1}$ such that
\[
\xymatrix{
Y_{-1} \ar[rr] \ar[dd]_{f_{-1}}&
&
Y \ar[dd]^f\\
&
&
\\
A_{-1} \ar[rr]&
&
A
}
\]
commutes.
Then there is a map of filtrations $Y_i \sr{f_i} \lra A_i$ such that $\colim f_i\we f$ under the equivalences $\colim Y_i \we Y$ and $\colim A_i \we A$.
Furthermore, the lift $f_{\bu}$ of $f$ is unique up to homotopy of filtered modules, in the sense of Definition \ref{defn:homotopy of filtered modules}.
\end{thm}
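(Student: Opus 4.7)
The plan is to prove the theorem by induction on the filtration degree $i$, directly mirroring the classical Comparison Theorem of homological algebra. Since the definition of a filtration forces $Y_{-1}=A_{-1}=\ast$, the base case is essentially vacuous, and all the real work is in the inductive step. I will strengthen the inductive hypothesis to include $j^A_i \circ f_i = f \circ j^Y_i$ on the nose, so that the desired conclusion $\colim f_i \we f$ follows automatically from level-wise compatibility together with the equivalences $\hocolim Y_i \we Y$ (exhaustiveness of $Y_\bu$) and $\hocolim A_i \we A$ (guaranteed by exactness, via Lemma \ref{lem:partial=0}).

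Assuming $f_{i-1}$ has been constructed compatibly with the filtration and with $f$, the first step is to extend $\alpha^A_{i-1} \circ f_{i-1}\colon Y_{i-1} \to A_i$ along the cofibration $Y_{i-1} \to Y_i$. The obstruction is the composite $\sus^{i-1} F_i \to Y_{i-1} \to A_{i-1} \to A_i$, and I would show it is null via the following chain. The composite $\sus^{i-1} F_i \to Y_{i-1} \to Y$ is null (two successive maps in the rotated cofiber sequence $\sus^{i-1} F_i \to Y_{i-1} \to Y_i$), so by augmentation compatibility $\sus^{i-1} F_i \to A_{i-1} \to A$ is null, whence $\sus^{i-1} F_i \to A_{i-1}$ factors through the fiber $\sus^{i-1} K_{i-1}$ of $j^A_{i-1}$. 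The defining square for $\pi_{i-1}$ identifies the composite into $A_i$ with $\sus^{i-1} F_i \to \sus^{i-1} K_{i-1} \llra{\pi_{i-1}} \sus^i K_i \to A_i$, and since $\pi_{i-1} = 0$ on $\pi_\ast$ by exactness of $A_\bu$ and $\sus^{i-1} F_i$ is free, this composite vanishes. Thus a lift $\tilde f_i\colon Y_i \to A_i$ exists. To restore augmentation compatibility, consider the error $d := j^A_i \circ \tilde f_i - f \circ j^Y_i\colon Y_i \to A$; it vanishes on $Y_{i-1}$ by induction and therefore factors through $Y_i/Y_{i-1}=\sus^i F_i$ as $\bar d\colon \sus^i F_i \to A$. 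I would then correct $\tilde f_i$ by a $\delta\colon Y_i \to A_i$ factoring through $\sus^i F_i$ with $j^A_i\circ\delta=-d$, which reduces to lifting $-\bar d$ through $j^A_i$; the obstruction $\partial_i \circ \bar d\colon \sus^i F_i \to \sus^{i+1} K_i$ vanishes on $\pi_\ast$ because $\partial_i$ itself does (Lemma \ref{lem:partial=0}), and hence vanishes outright since $\sus^i F_i$ is free.

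For uniqueness up to homotopy of filtered modules (Definition \ref{defn:homotopy of filtered modules}), the plan is to run the same inductive argument with the source filtration replaced by $\Gb'(I^R_\bu, Y_\bu)$. Given two lifts $f_\bu, g_\bu$, one constructs a homotopy $H_\bu\colon \Gb'(I^R_\bu, Y_\bu) \to A_\bu$ stage by stage, using that $\Gb'(I^R_\bu, Y_\bu)$ is again a free filtration (the filtration quotients of $I^R_\bu$ are free $R$-modules, and relative smash products of free $R$-modules over $R$ remain free). The main obstacle I anticipate is the combinatorial bookkeeping: the cofiber of the $n$-th into the $(n+1)$-st stage of $\Gb'(I^R_\bu, Y_\bu)$ decomposes as a wedge of smash products of cofibers of $I^R_\bu$ and $Y_\bu$, and each extension must simultaneously respect the endpoint data prescribed by $f_\bu$ at $0$ and $g_\bu$ at $1$ in addition to the filtration compatibility. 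Nevertheless, each individual extension is controlled by exactly the same freeness-plus-exactness obstruction calculation as in the existence step, so the induction goes through and produces the required filtered homotopy.
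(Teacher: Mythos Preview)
Your proposal is correct and follows essentially the same approach as the paper. The existence argument is identical in substance: the paper also extends $\alpha^A_{i-1}\circ f_{i-1}$ over the cofibration $Y_{i-1}\to Y_i$ by showing the obstruction map from the free cofiber into $A_i$ is zero on $\pi_*$ (hence null) via exactness of $A_\bu$, and then corrects the resulting $\tilde f_i$ by the error term $j_i\circ\tilde f_i - y_i$, lifting it through $j_i$ using that $\partial_i$ vanishes on homotopy. Your phrasing of the obstruction step (factoring through the fiber $\sus^{i-1}K_{i-1}$ and using the defining square for $\pi_{i-1}$) is a slightly more geometric repackaging of the paper's argument, which instead works directly in homotopy groups and invokes $\ker(j_{i-1*})=\pi_*\sus^{i-1}K_{i-1}$; the content is the same.

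For uniqueness the paper takes a shortcut you may find simplifies the bookkeeping you anticipate: rather than building the filtered homotopy stage by stage while enforcing the endpoint constraints $f_\bu$ and $g_\bu$, it first observes that $\colim f_\bu \we f \we \colim g_\bu$, chooses an ordinary homotopy $H\colon I^R_+\wedge_R Y \to A$ between the colimits, and then applies the already-proved existence statement verbatim to the free filtration $\Gb'(I^R_\bu,Y_\bu)$ and the map $H$. This reduces uniqueness directly to existence with no new obstruction analysis.
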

It has been pointed out to us that this theorem is reminiscent of Propositions 7.2.1.5 and 7.2.1.8 of Lurie in \cite{LurieHA}.
The first Proposition is the existence of a map of simplicial objects under comparable hypotheses and the second is the uniqueness up to homotopy.
These simplicial objects play the role of resolutions.

\begin{proof}
We first prove the existence of such a lift by induction and then show that uniqueness is a special case of existence.
The base case is handled by the assumption that we have an $f_{-1}$ making the above diagram commute.
In practice, $Y_{-1}$ and $A_{-1}$ will be a point and so having just a map $f: Y \to A$ is sufficient.
Suppose we have already constructed $f_{k}: Y_{k} \to A_{k}$ so that the diagram
\[
	\xymatrix{
	Y_{k-1} \ar[rr]^{s_{k-1}} \ar[dd]_{f_{k-1}} \ar[drrrr]_{y_{k-1}}&
	&
	Y_k \ar[dd]_{f_k} \ar[drr]^{y_k}&
	&
	\\
	&
	&
	&
	&
	A
	\\
	A_{k-1} \ar[rr]_{\alpha_{k-1}} \ar[urrrr]^{j_{k-1}}&
	&
	A_k \ar[urr]&
	&
	\\}
\]  	
commutes for all $-1\leq k<i$.
We now wish to construct $f_i: Y_i \to A_i$ such that
\[
	\xymatrix{
	Y_{i-1} \ar[rr]^{s_{i-1}} \ar[dd]_{f_{i-1}} \ar[drrrr]_{y_{i-1}}&
	&
	Y_i \ar[dd]_{f_i} \ar[drr]^{y_i}&
	&
	\\
	&
	&
	&
	&
	A
	\\
	A_{i-1} \ar[rr]_{\alpha_{i-1}} \ar[urrrr]&
	&
	A_i \ar[urr]&
	&
	\\}
\] 
commutes.
We have the cofiber sequences 
\[
Y_{i-1} \sr{s_{i-1}} \lra Y_i \sr{\delta_i} \lra \sus^i G_i.
\]
with $G_i$ a projective $R$-module as $\Yb$ is a projective filtration.
For the above diagram to commute we must construct 
\[
f_i : Y_i \to A_i
\]
so that 
\begin{center}
$f_i \circ s_{i-1} = \alpha_{i-1} \circ f_{i-1}$ and $j_i \circ f_i = y_i$.
\end{center}
The map 
\[
Y_{i-1} \sr{y_{i-1}} \to A
\]
factors through $Y_i$ so that $y_{i-1}=y_i \circ s_{i-1}$, as indicated by the above diagram.
Therefore the composite $\sus^{i-1} G_i \to Y_{i-1} \sr {y_{i-1}} \to A$ is 0 in homotopy as $\sus^{i-1} G_i$ is the fiber of $s_{i-1}$.
From the following commutative diagram 
\[
	\xymatrix{
	\pi_* \sus^{i-1} G_{i} \ar[rr] \ar@{.>}[dd]&
	&
	\pi_* Y_{i-1} \ar[dd] \ar[rr]^{\pi_*s_{i-1}}&
	&
	\pi_* Y_i \ar[drr]^{\pi_*y_i}&
	&
	\\
	&
	&
	&
	&
	&
	&
	\pi_* A \\
	Ker(\pi_*j_{i-1})=\pi_*\sus^{i-1}K_{i-1} \ar[rr]&
	&
	\pi_* A_{i-1} \ar[rr]_{\pi_* \alpha_{i-1}}&
	&
	\pi_* A_i \ar[urr]_{\pi_*j_i}
}
\]
we can deduce that the composite 
\[
\sus^{i-1}G_i \lra Y_{i-1} \sr{f_{i-1}}\lra A_{i-1} \sr{\alpha_{i-1}}\lra A_i
\]
is null-homotopic.
As the top row of that commutative diagram comes from a cofiber sequence, we know that the image of $\pi_*\sus^{i-1}G_i$ in $\pi_*A$ is $0$ and therefore contained in the kernel of 
\[
\pi_*j_{i-1}:\pi_*A_{i-1} \lra \pi_*A.
\]
The map $\pi_*\alpha_{i-1} : \pi_* A_{i-1} \to \pi_* A_i$ annihilates the kernel of $j_{i-1}$ as the cofiber sequence
\[
\sus^kK_k \lra A_k \lra A
\]
induces a short exact sequence in homotopy for all $k$ by Lemma \ref{lem:partial=0}.
Thus the composite
\[
\sus^{i-1} G_i \to A_{i-1} \to A_i
\]
is 0 in homotopy and is therefore null-homotopic since $G_i$ is projective.

As the above mentioned composite is null-homotopic, we can extend $f_{i-1}$ along $s_{i-1}$ to all of $Y_i$ 
\[
 \xymatrix{
 \sus^{i-1} G_{i} \ar[rr] \ar[ddrr] \ar@/^1pc/[ddrrrr]^>>>>>>>>>>>>0&
	&
	Y_{i-1} \ar[dd]_{f_{i-1}} \ar[rr]^{s_{i-1}}&
	&
	Y_i \ar[dd]^{\widetilde{f}_i}&
	\\
	&
	&
	&
	&
	&
	\\
	&
	&
	A_{i-1} \ar[rr]_{\alpha_{i-1}}&
	&
	A_i }
\]

so that $\widetilde{f}_i \circ s_{i-1} = \alpha_{i-1} \circ f_{i-1}$.
We now must determine if $y_i = j_i \circ \widetilde{f}_i$.
\[
	\xymatrix{
	Y_i \ar[dd]_{\widetilde{f}_i} \ar[dr]^{y_i}&
	\\
	&
	A\\
	A_i \ar[ur]^{j_i}&
}
\]
Consider the map 
\[
\varphi:=j_i \circ \widetilde{f}_{i} - y_i: Y_i \to A.
\]
We have that
\begin{eqnarray*}
\varphi \circ s_{i-1} &=& (j_i \circ \widetilde{f}_i - y_i) \circ s_{i-1} \\
&=& j_i \circ \alpha_{i-1} \circ f_{i-1} - y_{i-1} \\
&=& j_{i-1} \circ f_{i-1} - y_{i-1}\\
&=&0.
\end{eqnarray*}
since by assumption
\begin{eqnarray*}
y_i \circ s_{i-1} &=& y_{i-1}\\
j_{i-1} \circ f_{i-1} &=& y_{i-1}\\
j_i \circ \alpha_{i-1} &=& j_{i-1}.
\end{eqnarray*}
Therefore, $\varphi: Y_i \to A$ induces a map $d : \sus^i G_i \to A$ 
\[
	\xymatrix{
	Y_{i-1} \ar[drr]_{0} \ar[rr]^{s_{i-1}}&
	&
	Y_i \ar[d]_{\varphi} \ar[rr]^{\delta_i}&
	&
	\sus^i G_i \ar@{.>}[lld]^d\\
	&
	&
	A&
	&
}
\]
so that $d \circ \delta_i= \varphi$.
Since $\partial_i$ is 0 in homotopy and $G_i$ is projective, we have that
\[
\partial_i \circ d :  \sus^i G_i \to  \sus^{i+1} K_i
\]
is null-homotopic.
Therefore, there is a lift 
\[
	\xymatrix{
	&
	&
	\sus^i G_i \ar@{.>}[lld]_{d'} \ar[d]^d \ar[rrd]^0&
	&
	\\
	A_i \ar[rr]^{j_i}&
	&
	A \ar[rr]^{\partial_i}&
	&
	\sus^{i+1} K_i
}
\]
$d'$ of $d$, and so we have $j_i \circ d'=d$ and $d \circ \delta_i= \varphi$.
We now define $f_i = \widetilde{f}_i - d' \circ \delta_i$.
All that remains is to check that $f_i \circ s_{i-1} = \alpha_{i-1} \circ f_{i-1}$ and $j_i \circ f_i = y_i$.
\begin{eqnarray*}
 f_i \circ s_{i-1} & = & (\widetilde{f}_i - d' \circ \delta_i) \circ s_{i-1}\\
  & = & \widetilde{f}_i \circ s_{i-1}\\
  & = & \alpha_{i-1} \circ f_{i-1}
\end{eqnarray*}
and
\begin{eqnarray*}
 j_i \circ f_i & = & j_i \circ (\widetilde{f}_i - d' \circ \delta_i)\\
  & = & j_i \circ \widetilde{f}_i - d \circ \delta_i\\
  & = & j_i \circ \widetilde{f}_i - \varphi\\
  & = & y_i
\end{eqnarray*}
as desired.

To prove that any two such lifts are homotopic we will construct a homotopy using the above existence result.
So assume that we are given two different lifts of $f$, call them $f_{\bu}^1$ and $f_{\bu}^2$.
As $f\we \colim f^i_{\bu}$ we have that $f^1 \we f^2$ where $f^i:=\colim f^i_{\bu}$.
Specifically, we have a map 
\[
 H: I_+^R \wedge_R Y \lra A
\]
which is a homotopy from $f^1$ to $f^2$.
We can now lift this homotopy by applying the above argument to the map $H$ and the filtration $\Gb'(I^R_{\bu},\Yb)$, which is still a projective filtration.
The desired homotopy of maps of filtered $R$-modules follows.
\end{proof}
This key lemma will allow us to construct multiplicative and $\Hi$-filtrations.
The uniqueness up to homotopy will be necessary to show that certain diagrams commute up to homotopy.

One application of the above result is the lifting of maps from the category of $R$-modules to the category of filtered $R$-modules.
Given a map of $R$-modules
\[
\varphi: A \lra B
\]
there is a unique homotopy class of lift
\[
\varphi_{\bu}: \Ab \lra \Bb
\]
where the filtrations $\Ab$ and $\Bb$ are those constructed from a projective $R_*$-resolution of $A_*$ and $B_*$ as described in section \ref{subsec:filtration to resolution and back again}.
This fact will provide for the naturality of the following corollaries.

\begin{cor}
\label{cor:mult}
If $A$ is an $R$-algebra, then the filtration of $A$ associated with a projective resolution of $A$ is also multiplicative.
The construction of this multiplicative filtration is natural up to homotopy.
\end{cor}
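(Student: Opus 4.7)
The plan is to apply the Comparison Theorem (Theorem \ref{thm: Comparison Theorem}) to the $R$-algebra multiplication $\mu: A \wedge_R A \to A$, viewed as a map of $R$-modules, using the filtration $\Gb'(\Ab,\Ab)$ on the domain and the filtration $\Ab$ on the target. If the hypotheses apply, the theorem produces a lift $\mu_\bu: \Gb'(\Ab,\Ab) \to \Ab$, which we take as the multiplicative structure on $\Ab$.

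First I would verify the three hypotheses. The target $\Ab$, produced by Proposition \ref{prop:resolution to filtration}, is exact by construction. The source $\Gb'(\Ab,\Ab)$ is free: by Lemma \ref{lemma:smash product of filtrations} its associated graded complex is $F_* \wedge_R F_*$, whose $n$-th term is $\bigvee_{i+j=n} F_i \wedge_R F_j$; each $F_i$ is a wedge of $S^k_R$'s, and $S^k_R \wedge_R S^j_R \simeq S^{k+j}_R$, so each filtration quotient is again free. Exhaustiveness follows because the smash product commutes with sequential homotopy colimits under our cofibrancy conventions. The base case of the induction is trivial since $Y_{-1} = * = A_{-1}$, so $f_{-1}$ is forced.

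The output $\mu_\bu$ of the Comparison Theorem is unique up to homotopy of filtered $R$-modules, and this uniqueness is exactly what yields the weak associativity required in Definition \ref{defn:homotopy of filtered modules}. Both composites $\mu_\bu \circ \Gb'(\mu_\bu, 1_{\Ab})$ and $\mu_\bu \circ \Gb'(1_{\Ab}, \mu_\bu)$ are maps from $\Gb'(\Ab, \Gb'(\Ab,\Ab))$ to $\Ab$, both lifting the common underlying map $\mu \circ (\mu \wedge_R 1) = \mu \circ (1 \wedge_R \mu)$ on colimits (where we use that $\mu$ is strictly associative as an $R$-algebra multiplication). The triple smash $\Gb'(\Ab,\Gb'(\Ab,\Ab))$ is free and exhaustive for the same reasons as above, so uniqueness in Theorem \ref{thm: Comparison Theorem} forces the two composites to agree up to homotopy of filtered modules.

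Naturality is an instance of the same uniqueness principle. Given a map $g: A \to B$ of $R$-algebras, the Comparison Theorem lifts $g$ to a map of filtrations $g_\bu: \Ab \to \Bb$. Then $\mu^B_\bu \circ \Gb'(g_\bu,g_\bu)$ and $g_\bu \circ \mu^A_\bu$ are both lifts of $\mu^B \circ (g \wedge_R g) = g \circ \mu^A$, hence are homotopic as maps of filtered $R$-modules. The main obstacle, and essentially the only non-formal point, is bookkeeping the stability of the two relevant properties under the constructions we use: confirming that smash products of free and exhaustive filtrations are again free and exhaustive, and that smashing with the interval filtration $I^R_\bu$ (needed to lift homotopies for associativity and naturality) preserves these properties, so that the Comparison Theorem is available at each stage.
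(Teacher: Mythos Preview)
Your proposal is correct and follows essentially the same route as the paper: apply the Comparison Theorem to the relative product map using the free filtration $\Gb'(\Ab,\Ab)$ and the exact filtration $\Ab$, then invoke the uniqueness clause for both associativity and naturality. The only cosmetic omission is that the definition of a multiplicative filtration requires a map out of the \emph{absolute} smash filtration $\Gb(\Ab,\Ab)$, so (as the paper does) you should precompose your $\mu_\bu$ with the natural map $r:\Gb(\Ab,\Ab)\to\Gb'(\Ab,\Ab)$ to obtain the multiplicative structure in the required form.
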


\begin{proof}
Let $A_{\bullet} \subset A$ be the filtration associated with the projective resolution.
It is projective and exact by Proposition \ref{prop:resolution to filtration}.
There is a natural map of filtrations
\[
\Gamma_{\bu} \sr{r}\lra \Gamma'_{\bu}
\]
where $\Gamma_n := \bigcup_{i+j=n} A_i \wedge A_j\subset A \wedge A$ and $\Gamma'_n := \bigcup_{i+j=n} A_i \wedge_R A_j \subset A \wedge_R A$.
The filtration quotients of $\Gamma'_{\bu}$ are $\bigvee_{i+j=n} \sus^n F_i \wedge_R F_j$ by Lemma \ref{lemma:smash product of filtrations}.
The filtration quotients are projective as each $F_i$ and $F_j$ are projective $R$-modules.
To show that this filtration is multiplicative, we need to construct maps $\mu_n :\Gamma_n \lra A_n$ that are compatible with the filtrations and are restrictions of the product on $A$.
We obtain the desired map as the composition
\[
\Gamma_{\bu} \sr{r}\lra \Gamma'_{\bu} \sr{\mu'_{\bu}}\lra \Ab
\]
where the second map is constructed by applying the Comparison Lemma to the filtration $\Gamma'_{\bu}$ and the map $\mu':A \wedge_R A \to A$.

To show that that the product is homotopy associative we note that both $\mu'_{\bu}\circ(\mu'_{\bu}\wedge_R 1)$ and $\mu'_{\bu}\circ(1 \wedge_R \mu'_{\bu})$
\[
 \Gamma'_{\bu}(\Ab,\Ab,\Ab) \lra \Ab
\]
are solutions to the problem of lifting $\mu'\circ(\mu'\wedge_R 1)=\mu'\circ(1\wedge_R \mu')$
\[
A\wedge_R A  \wedge_R A \lra A
\]
to a filtered map.
By the uniqueness part of the Comparison Theorem we get the desired homotopy.

To see the naturality suppose we are given a map of $R$-algebras
\[
\varphi: A \lra B
\]
that we have lifted to 
\[
\varphi_{\bu}: \Ab \lra \Bb.
\]
We obtain a homotopy commutative diagram of filtered $R$-modules
\[
\xymatrix{
\Gb(\Ab,\Ab) \ar[rr]^{\mu^A_{\bu}} \ar[dd]_{\varphi_{\bu}\wedge \varphi_{\bu}}&
&
\Ab \ar[dd]^{\varphi_{bu}}\\
&
&
\\
\Gb(\Bb,\Bb) \ar[rr]^{\mu^B_{\bu}}&
&
\Bb.}
\]
The commutativity of the diagram up to homotopy follows as argued above, both composites solve the same lifting problem and are therefore homotopic.
\end{proof}

The proof of Theorem \ref{thm: Comparison Theorem} can also be used when we are mapping out of a cofiber sequence with projective fiber.
A particular case of interest is the skeletal filtration on the extended power construction, see  Definition \ref{dfn:Hinfty filtration}.

\begin{cor}
\label{cor:Hinfty structure on filtration}
Using the notation in Definition \ref{dfn:Hinfty filtration}, the filtration $\Ab$ associated with a projective resolution of a commutative $S$-algebra has a $\Hi$-structure.
Explicitly, there are maps $\tGamma^{\pi}_{n,k}=E\pi^{(k)}_+\wedge_{\pi} \Gamma^r_n \sr {\mu_{n,k}} \lra A_{k+n}$ that refine 
\[
\xymatrix{
A^{\wedge r}\ar[rr] \ar[dr]&
&
A\\
&
E\pi_+\wedge_{\pi} A^{\wedge r} \ar[ur]&
}
\]
and fit into the following commutative diagrams:
\[
\xymatrix{
\tGamma^{\pi}_{k,n-1} \bigcup_{\tGamma^{\pi}_{k-1,n-1}} \tGamma^{\pi}_{k-1,n} \ar[dd] \ar[rr]&
&
\tGamma^{\pi}_{k,n} \ar[dd]
&
\\
&
&
\\
A_{n+k-1} \ar[rr]&
&
A_{n+k}\\
}
\]
where $\pi \subseteq \Sigma_r$.
Further, maps of commutative $R$-algebras induce maps of $\Hi$-filtrations.
\end{cor}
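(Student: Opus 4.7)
The plan is to reduce the construction of the $\Hi$-structure on $\Ab$ to a single application of the Comparison Theorem, following the template of Corollary \ref{cor:mult}. Since $A$ is a commutative $S$-algebra, for every $r$ and every subgroup $\pi \subseteq \Sigma_r$ there is a genuine structure map $\xi_r^A : E\pi_+ \wedge_\pi A^{\wedge r} \to A$. I would filter the target by $\Ab$, which is free and exact by Proposition \ref{prop:resolution to filtration}, and filter the source by $\tGamma^\pi_\bullet(\Ab)$ from Lemma \ref{lem:geometric condition}.

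To invoke Theorem \ref{thm: Comparison Theorem}, I first verify that $\tGamma^\pi_\bullet(\Ab)$ is a free and exhaustive filtration of $E\pi_+ \wedge_\pi A^{\wedge r}$. Lemma \ref{lem:geometric condition} identifies its associated graded as wedges of the form $(B\pi^{(i)}/B\pi^{(i-1)}) \wedge \bigwedge_k \sus^{\alpha_k} F_{\alpha_k}$; since $B\pi^{(i)}/B\pi^{(i-1)}$ is a wedge of $i$-spheres and each $F_{\alpha_k}$ is a free $R$-module, each filtration quotient is itself a free $R$-module, as required. Exhaustiveness is immediate from the skeletal colimit description of $E\pi_+$ together with the exhaustiveness of $\Ab$. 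With $Y_{-1} = A_{-1} = *$ the base case of the Comparison Theorem is trivial, and the theorem produces a filtered lift $\mu_\bullet^r : \tGamma^\pi_\bullet(\Ab) \to \Ab$ of $\xi_r^A$, unique up to filtered homotopy.

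The bigraded maps $\mu_{n,k}$ required by Definition \ref{dfn:Hinfty filtration} are then obtained by restricting $\mu_{n+k}^r$ along the canonical inclusions $E\pi^{(k)}_+ \wedge_\pi \Gamma^r_n \hookrightarrow \tGamma^\pi_{n+k}(\Ab)$. The required pushout compatibility squares commute strictly: the upper-left corner of each square sits inside $\tGamma^\pi_{n+k-1}(\Ab)$, its inclusion into $E\pi^{(k)}_+ \wedge_\pi \Gamma^r_n$ factors through $\tGamma^\pi_{n+k}(\Ab)$, and both composites down to $A_{n+k}$ are given by restricting the single filtered map $\mu_\bullet^r$.

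For naturality, suppose $\varphi : A \to B$ is a map of commutative $R$-algebras. I would lift $\varphi$ to a filtered map $\varphi_\bullet : \Ab \to \Bb$ via Theorem \ref{thm: Comparison Theorem}; this induces a filtered map $\Phi_\bullet : \tGamma^\pi_\bullet(\Ab) \to \tGamma^\pi_\bullet(\Bb)$. Both composites $\varphi_\bullet \circ \mu_\bullet^{A,r}$ and $\mu_\bullet^{B,r} \circ \Phi_\bullet$ are filtered lifts of the common underlying map $\varphi \circ \xi_r^A = \xi_r^B \circ (E\pi_+ \wedge_\pi \varphi^{\wedge r})$, so the uniqueness clause of the Comparison Theorem produces a filtered homotopy between them. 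The main obstacle is verifying the freeness of $\tGamma^\pi_\bullet(\Ab)$, which Lemma \ref{lem:geometric condition} makes transparent; the rest is a formal application of Theorem \ref{thm: Comparison Theorem}, mirroring Corollary \ref{cor:mult}.
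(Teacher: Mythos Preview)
Your overall strategy is the same as the paper's, and the idea of applying Theorem~\ref{thm: Comparison Theorem} once to the total filtration (rather than running the paper's explicit bigraded induction) is perfectly fine: the bigraded compatibility squares do follow by restriction from a single filtered lift, exactly as you say.

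There is, however, a real gap in the freeness verification. You apply the Comparison Theorem to $\tGamma^\pi_\bullet(\Ab)$ as defined in Lemma~\ref{lem:geometric condition}, whose associated graded involves \emph{absolute} smash products $\bigwedge_k \Sigma^{\alpha_k}F_{\alpha_k}$. An absolute smash of free $R$-modules is not a free $R$-module: $S^n_R \wedge S^m_R$ has the homotopy type of $\Sigma^{n+m}(R\wedge R)$, not $\Sigma^{n+m}R$. So your claim that ``each filtration quotient is itself a free $R$-module'' fails, and Theorem~\ref{thm: Comparison Theorem} does not apply to this filtration as stated.

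The fix is exactly what the paper does (and what Corollary~\ref{cor:mult}, which you cite as your template, already does): pass through the relative version. There is a natural map of filtrations $\tGamma^r_\bullet \to \tGamma'^r_\bullet$, where $\tGamma'^r_\bullet$ is built from $\wedge_R$ rather than $\wedge$. The cofiber of $\tGamma'^r_{k,n-1}\cup \tGamma'^r_{k-1,n}\hookrightarrow \tGamma'^r_{k,n}$ is $(B\pi^{(k)}/B\pi^{(k-1)})\wedge(\Gamma'^r_n/\Gamma'^r_{n-1})$, and now $\Gamma'^r_n/\Gamma'^r_{n-1}\simeq \bigvee \Sigma^n F_{\alpha_1}\wedge_R\cdots\wedge_R F_{\alpha_r}$ \emph{is} a free $R$-module, since $S^n_R\wedge_R S^m_R\simeq S^{n+m}_R$. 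Apply the Comparison Theorem to $\tGamma'^r_\bullet$ to produce the filtered lift, then precompose with $\tGamma^r_\bullet\to\tGamma'^r_\bullet$ to get the maps $\mu_{n,k}$ on the absolute version that the statement asks for. With this one correction your argument goes through; you should also note, as the paper does, that the $\Hi$-coherence conditions among different $r$'s (Definition~\ref{dfn:Hinfty filtration}) follow from the same uniqueness-of-lifts argument you already use for naturality.
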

\begin{proof}
This is an application of the methods of the proof of Theorem \ref{thm: Comparison Theorem}.
While we do have a group action present we are only considering the analog of the homotopy orbit construction.
It has no group action and we treat it as a bifiltered spectrum.
When we ``totalize'' this bifiltration to have a single filtered module we use Lemma \ref{lem:geometric condition} to understand the associated graded complex.

As in the case of Corollary \ref{cor:mult}, there is an obvious map of filtrations
\[
\tGamma^{\pi}_{\bu} \lra \tGamma'^{\pi}_{\bu}
\]
where 
\[
\tGamma^{\pi}_n=\bigcup_{i+j=n}E\pi^{(i)}_+\wedge_{\pi} \bigcup_{\sum_{l=1}^r\alpha_l=j} X_{\alpha_1} \wedge X_{\alpha_2}\wedge \ldots \wedge X_{\alpha_r}
\]
and
\[
\tGamma'^{\pi}_{\bu} =\bigcup_{i+j=n}E\pi^{(i)}_+\wedge_{\pi} \bigcup_{\sum_{l=1}^r\alpha_l=j} X_{\alpha_1} \wedge_R X_{\alpha_2}\wedge_R \ldots \wedge_R X_{\alpha_r}.
\]
We want to construct the map $\xi_{k,n}$
\[
 \xymatrix{
 \tGamma'^{\pi}_{k-1,n-1}\ar[rr] \ar[dd]_{\xi_{k-1,n-1}}&
 &
 \tGamma'^{\pi}_{k,n-1} \bigcup \tGamma'^{\pi}_{k-1,n}\ar[rr] \ar[dd]_{\xi_{k,n-1}\cup}^{\xi_{k-1,n}}&
 &
 \tGamma'^{\pi}_{k,n} \ar@{.>}[dd]^{\xi_{k,n}}\\
 &
 &
 &
 &
 \\
 A_{k+n-2}\ar[rr]&
 &
 A_{k+n-1}\ar[rr]&
 &
 A_{k+n}
 }
\]
extending $\xi_{k,n-1} \cup \xi_{k-1,n}$ to all of $\tGamma'^{\pi}_{k,n}$.
Theorem \ref{thm: Comparison Theorem} outlines how we do this when the cofiber of the ``inclusion'' is projective and we are mapping into an exact filtration.
Observe that the cofiber of 
\[
\tGamma'^{\pi}_{k,n-1} \bigcup \tGamma'^{\pi}_{k-1,n} \to \tGamma'^{\pi}_{k,n}
\]
is just
\[
\frac{B\pi^{(k)}}{B\pi^{(k-1)}} \wedge \frac{\Gamma'^r_n}{\Gamma'^r_{n-1}}.
\] 
This is a projective module as $B\pi^{(k)}/B\pi^{(k-1)}$ is a wedge of spheres and $\Gamma'^r_n/\Gamma'^r_{n-1}$ is projective by Lemma \ref{lemma:smash product of filtrations}.
We construct the desired map by induction.
The map on $\tGamma'^{\pi}_{0,n}=\Gamma'^{\pi}_n$ is constructed by the previous corollary.
By induction, we assume we have already constructed $\tGamma'^{\pi}_{i,j} \sr {\xi_{i,j}} \lra A_{i+j}$ for $i+j<k$ that restricts to $\mu'_j$ on $\tGamma'^{\pi}_{0,j}$ (the zero skeleton) and filters the structure map 
\[
 E\pi_+\wedge_{\pi} A^r \sr {\xi} \lra A.
 \]
The rest follows from the argument in the proof of Theorem \ref{thm: Comparison Theorem} applied to the cofiber sequence

\[
 \xymatrix{
 \tGamma'^{\pi}_{k-1,n-1}\ar[rr]&
 &
 \tGamma'^{\pi}_{k,n-1} \bigcup \tGamma'^{\pi}_{k-1,n}\ar[rr]&
 &
\tGamma'^{\pi}_{k,n}.
}
\]

To show that the relevant diagrams commute so that we obtain an $\Hi$-filtration we use the same argument as above when we showed that the filtration was multiplicative.
One demonstrates that there are two different solutions to the relevant lifting problem and the homotopy uniqueness of the lift provided by Theorem \ref{thm: Comparison Theorem} provides a homotopy.
This same argument is also used to show that maps of commutative $R$-algebras induce maps of $\Hi$-filtrations.
\end{proof}
This argument is very similar to that used by May in \cite{MaySteenrod} when one needs to construct the relevant homotopy commutative diagrams which we learned from the retelling in \cite{HRS} by Bruner in Chapter 4 section 2.

\begin{cor}
The K\"unneth spectral sequence 
\[
\Tor^{R_*}_p(A_*,B_*)_q \ssra \pi_{p+q}(A\wedge^L_R B)
\]
is multiplicative whenever $R$ is a commutative $S$-algebra, and both $A$ and $B$ are $R$-algebras.
When $A$ and $B$ are also commutative $R$-algebras then the above KSS has a $\Hi$-filtration.
\end{cor}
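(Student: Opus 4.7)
The plan is to realize the KSS as the spectral sequence associated to a filtration of $A \wedge_R B$ that inherits the required multiplicative and $\Hi$-structures from filtrations of $A$ and of $B$. Begin by applying Proposition \ref{prop:resolution to filtration} to free resolutions of $A_*$ and $B_*$ as $\pi_*R$-modules, obtaining free, exact, exhaustive filtrations $\Ab \subset A$ and $\Bb \subset B$ of $R$-modules. Form the relative smash filtration $\Gb'(\Ab,\Bb)$ of $A \wedge_R B$. By Lemma \ref{lemma:smash product of filtrations} its associated graded complex is the smash product of the two free resolutions of $R$-modules, so taking homotopy groups on the $E_1$ page and then homology recovers $\Tor^{\pi_*R}(\pi_*A,\pi_*B)$ on $E_2$. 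This identifies the spectral sequence of $\Gb'(\Ab,\Bb)$ with the KSS, as constructed in Chapter 4 of \cite{EKMM}.

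For the multiplicativity claim, since $A$ and $B$ are $R$-algebras, Corollary \ref{cor:mult} endows $\Ab$ and $\Bb$ with multiplicative structures as filtered $R$-modules. The lemma in Section \ref{subsec:monoidal structures} stating that the smash of two multiplicative filtrations is multiplicative then supplies a compatible multiplication on $\Gb'(\Ab,\Bb)$ (relative to $R$, using the remark there). Since a multiplicative filtration induces a multiplicative spectral sequence (as noted in the outline, citing \cite{DuggerMultSS} and \cite{TilsonThesis}), the KSS is multiplicative.

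Now assume $A$ and $B$ are commutative $R$-algebras, so that $A \wedge_R B$ is again a commutative $R$-algebra. The filtration $\Gb'(\Ab,\Bb)$ is still free and exhaustive: by Lemma \ref{lemma:smash product of filtrations} its filtration quotients are wedges of relative smash products of free $R$-modules, and hence free. With these hypotheses in hand, the inductive argument of Corollary \ref{cor:Hinfty structure on filtration} applies essentially verbatim with $\Gb'(\Ab,\Bb)$ in place of the filtration $\Ab$ used there: one lifts the structure maps
\[
E\Sigma_{r+} \wedge_{\Sigma_r} (A \wedge_R B)^{\wedge_R r} \lra A \wedge_R B
\]
witnessing commutativity of $A \wedge_R B$ to compatible maps $\tGamma^r_{k,n} \lra \Gb'(\Ab,\Bb)_{k+n}$ by repeated application of the Comparison Theorem \ref{thm: Comparison Theorem}, exactly as in the proof of Corollary \ref{cor:Hinfty structure on filtration}. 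Homotopy uniqueness of the lifts then forces the coherence diagrams of Definition \ref{dfn:Hinfty filtration} to commute up to homotopy, giving the desired $\Hi$-filtration.

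The main obstacle is the bookkeeping required to invoke the Comparison Theorem at each inductive stage: one must verify that the cofiber
\[
\tGamma^r_{k,n} \big/ \bigl(\tGamma^r_{k,n-1} \cup \tGamma^r_{k-1,n}\bigr)
\]
is a free $R$-module so that the lifting criterion of Theorem \ref{thm: Comparison Theorem} is met. This reduces by Lemma \ref{lem:geometric condition} to the observation that $B\Sigma_r^{(k)}/B\Sigma_r^{(k-1)}$ is a wedge of spheres and that relative smash products of the free filtration quotients of $\Ab$ and $\Bb$ remain free $R$-modules, which is immediate. Once this is in place, the pattern of argument is identical to Corollaries \ref{cor:mult} and \ref{cor:Hinfty structure on filtration}, so no further work is required.
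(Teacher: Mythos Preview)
Your multiplicativity argument is correct but proceeds by a different route than the paper. You resolve both $A$ and $B$ and filter $A\wedge_R B$ by the double filtration $\Gb'(\Ab,\Bb)$, whereas the paper (following \cite{EKMM}) resolves only $A$ and uses $\Ab\wedge_R B=\Gb'(\Ab,c(B))$ with $B$ in the constant filtration. For the multiplicative structure this causes no trouble: you invoke Corollary~\ref{cor:mult} on $\Ab$ and $\Bb$ separately, each of which \emph{is} exact and free, and then combine via the smash-of-multiplicatives lemma. The paper instead equips only $\Ab$ with a product via Corollary~\ref{cor:mult} and then smashes levelwise with the product on $B$ and a twist. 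Both approaches yield a multiplicative spectral sequence with the correct $E_2$-page.

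The $\Hi$ argument, however, has a genuine gap. To run the inductive lift of Corollary~\ref{cor:Hinfty structure on filtration} ``verbatim'' with $\Gb'(\Ab,\Bb)$ in place of $\Ab$, you need the \emph{target} filtration to be exact: Theorem~\ref{thm: Comparison Theorem} requires a free source \emph{and} an exact target, and your final paragraph verifies only freeness of the source cofibers. But $\Gb'(\Ab,\Bb)$ is not exact in general. By Lemma~\ref{lem:partial=0}, exactness would force $\pi_*\Gamma'_0\to\pi_*(A\wedge_R B)$ to be surjective; since $\Gamma'_0=F_0\wedge_R G_0$ with $\pi_*\Gamma'_0\iso\F_0\tensor_{R_*}\G_0$, this map can only hit classes detected in filtration~$0$, so whenever some $\Tor^{R_*}_s(A_*,B_*)$ with $s>0$ survives to $E_\infty$ the surjectivity fails. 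The same obstruction already applies to the paper's one-sided K\"unneth filtration $\Ab\wedge_R B$, which is why the paper never uses the K\"unneth filtration itself as the target of a Comparison Theorem lift. Instead it applies Corollary~\ref{cor:Hinfty structure on filtration} only to $\Ab$ (which \emph{is} exact), uses the $\Hi$-structure that $B$ carries as a commutative $R$-algebra on the constant filtration $c(B)$, and then assembles the structure map
\[
\tGb'^r(\Ab,c(B))\lra \Ab\wedge_R B
\]
from these two pieces together with a cellular approximation of the diagonal on $E\Sigma_r$. Keeping $B$ in the constant filtration is precisely what lets one avoid ever needing exactness of the K\"unneth filtration.
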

We postpone the proof until after our construction of the KSS.

\section{The K\"unneth spectral sequence}
\label{sec:KSS}
Here we construct the K\"unneth spectral sequence.
Our construction follows that of \cite{EKMM} although our notation is slightly different.

\begin{rmk}
\label{rmk: BSS}
Under certain flatness hypothesis, the K\"unneth spectral sequence can be made to look remarkably similar to the B\"okstedt spectral sequence.
However, the filtrations giving rise to these spectral sequences are quite different.
The B\"okstedt spectral sequence comes from the skeletal filtration of s simplicial spectrum.
In many cases of interest, the spectral sequence comes from the skeletal filtration of the simplicial commutative $S$-algebra $S^1\tensor A$ and computes $THH(A)$.
In this situation, the filtration is one of commutative $S$-algebras.
Under the above mentioned flatness hypotheses, the $E_2$-page of this spectral sequence can be computed as the Hochschild homology $\HH\HH_*(A_*)$, or rather $\Tor^{A^e_*}_*(A_*,A_*)$ where $A^e:=A\wedge A^{op}$.
In this situation we have both the K\"unneth and B\"okstedt spectral sequences with the same $E_2$-page converging to the same target.
Despite this, the filtrations are in fact different.
In particular, there is the commutation of the Dyer-Lashof operations with the action of the fundamental class of $S^1$, denoted $\sigma$ and referred to as the suspension.
This result doesn't quite make sense in our context.
For example, in $\pi_*(\HF_2\wedge_{ku}\HF_2)$ we have $Q^2(\oo{2})=\oo{v}$, see Proposition \ref{prop: ku computation}.
Even if $\oo{2}=\sigma(2)$ there is not a power operation on $\pi_*ku$ that takes $2$ to $v$.

Even when $R$ is an augmented $\HF_2$-algebra $\HF_2\wedge_R \HF_2$ does not possess a natural $S^1$-action.
In this situation we can identify $\HF_2\wedge_R \HF_2$ with $\HF_2\wedge_R THH^{\HF_2}(R)$.
We can think of the $S^1$-action on $THH^{\HF_2}(R)$ as coming from rotation on the free loop space as $THH^{\HF_2}(R)$ is the functions on the free loop space of $Spec(R)$.
With this in mind, $\HF_2 \wedge_R \HF_2$ is functions on the based loop space where we give $Spec(R)$ the basepoint coming from the augmentation $R \to \HF_2$.
Based loop spaces don't have an $S^1$ action.
\end{rmk}

\subsection{Construction of the K\"unneth spectral sequence}
First we construct the KSS using the techniques for constructing filtrations in \ref{sec:Filtrations and Complexes}.
Then we show that this filtration is multiplicative.

\begin{thm*}[\cite{EKMM}, Chapter 4 section 6]
Let $R$ be an $S$-algebra and $A,B$ be right and left $R$-modules respectively.
Further assume that $(R,S)$ has the homotopy type of a relative CW $S$-module and that $A$ and $B$ are cell $R$-modules (right and left respectively).
Then there is a strongly convergent upper half plane spectral sequence 
\[
E_2^{s,t}= \Tor^{\pi_* R}_s(\pi_*A,\pi_*B)_t \Longrightarrow \pi_{s+t}(A \wedge_R B).
\]
\end{thm*}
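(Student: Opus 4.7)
The plan is to build the KSS as the spectral sequence associated with an exhaustive filtration of $A \wedge_R B$ obtained by lifting a free $\pi_*R$-resolution of $\pi_*A$. First, I would choose a free resolution of $\pi_*A$ as a $\pi_*R$-module,
\[
\pi_* A \lla \mathcal{F}_0 \lla \mathcal{F}_1 \lla \mathcal{F}_2 \lla \cdots,
\]
and apply Proposition \ref{prop:resolution to filtration} to realize it as an exhaustive, exact, and free filtration $A_{\bullet} \subset A$ with $\pi_*\Cof(A_{i-1} \to A_i) = \Sigma^i \mathcal{F}_i$ and $\pi_* F_i = \mathcal{F}_i$ for free $R$-modules $F_i$.

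Next, I would smash this filtration with $B$ over $R$ to obtain a new filtration $A_{\bullet} \wedge_R B$ of $A \wedge_R B$. Since $B$ is a cell $R$-module and the maps in $A_{\bullet}$ are cofibrations of cofibrant $R$-modules, $(-) \wedge_R B$ preserves these cofibrations and their cofibers; thus the filtration quotients are $\Sigma^s F_s \wedge_R B$ and the homotopy colimit is $A \wedge_R B$ by exhaustivity. Running the standard exact couple of a filtered $R$-module on this filtration produces a spectral sequence whose $E_1$ page is
\[
E_1^{s,t} = \pi_{s+t}(\Sigma^s F_s \wedge_R B) \cong (\mathcal{F}_s \otimes_{\pi_*R} \pi_*B)_t,
\]
the isomorphism holding because each $F_s$ is a wedge of shifts of $R$, so that $F_s \wedge_R B$ is the corresponding wedge of shifts of $B$.

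The $d_1$ differential is induced by the connecting maps of the filtration. By the construction in Proposition \ref{prop:resolution to filtration}, these connecting maps on homotopy groups of free $R$-modules realize the differentials of the original $\pi_*R$-resolution; smashing with $B$ over $R$ then tensors with $\pi_*B$. Hence $(E_1, d_1)$ is exactly the chain complex $\mathcal{F}_{\bullet} \otimes_{\pi_*R} \pi_*B$, and taking homology yields
\[
E_2^{s,t} = \Tor^{\pi_*R}_s(\pi_*A, \pi_*B)_t.
\]
Convergence to $\pi_{s+t}(A \wedge_R B)$ then follows from exhaustivity of $A_{\bullet} \wedge_R B$; since the filtration is concentrated in non-negative degrees, the spectral sequence lies in the upper half plane and converges strongly.

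The main obstacle will be the careful identification of $d_1$ with the differential of the algebraic resolution. Unwinding the exact couple requires tracking the connecting map in $\pi_*$ between the consecutive filtration quotients $\Sigma^s F_s \wedge_R B$ and $\Sigma^{s+1} F_{s+1} \wedge_R B$ and comparing it to the original differential $d_s: \mathcal{F}_{s+1} \to \mathcal{F}_s$ tensored with $\pi_*B$. This comparison is precisely what the realization procedure of Proposition \ref{prop:resolution to filtration} was designed to ensure, so the verification is mechanical but is the one step where the explicit construction of the filtration from the resolution, rather than merely its abstract properties, is used.
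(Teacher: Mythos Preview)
Your proposal is correct and follows essentially the same approach as the paper: realize a free $\pi_*R$-resolution of $\pi_*A$ as a free exact filtration $A_\bullet$ via Proposition~\ref{prop:resolution to filtration}, smash with $B$ over $R$, and read off the $E_2$-page as $\Tor$ from the associated graded $\Sigma^s F_s\wedge_R B$. Your explicit discussion of identifying $d_1$ with the algebraic differential is a point the paper leaves implicit, but the argument is otherwise the same.
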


The grading $t$ comes from the internal grading as we are working with graded modules over graded algebras.
This result can be found in Chapter 4 section 6 of \cite{EKMM}, but we recall the construction here.

\begin{proof}
We wish to construct a filtration of $A \wedge_R B$ such that the homotopy of the associated graded of the filtration is the complex that computes $\Tor^{\pi_* R}_q(\pi_*A,\pi_*B)$.
Recall that in section \ref{subsec:filtration to resolution and back again} we constructed an exhaustive projective and exact filtration of a given $R$-module $A$ from a projective $R_*$-resolution of $A_*$.
As before, let us denote the projective $R$-module spectra realizing that resolution by $F_i$, and the $i$th of term of the resulting filtration by $A_i$.
Recall that $A_{-1}=*$ and $A_0=F_0$.
Since the filtration is exhaustive (Lemma \ref{lem:partial=0}), we have that $\hocolim_i A_i \we A$ as an $R$-module.
The K\"unneth filtration is given by 
\[
A_0 \wedge_R B \to A_1 \wedge_R B \to \cdots \to A_i \wedge_R B \to A_{i+1} \wedge_R B \to \cdots.
\]
We will abbreviate $A_i \wedge_R B$ as $X_i$.
This filtration is also exhaustive as smashing with a cell $R$-module commutes with taking homotopy colimits; see Chapter 10 section 3 and Chapter 3 sections 2 and 3 of \cite{EKMM}.

Next, we wish to identify the $E_2$ page of the spectral sequence.
The associated graded complex of this filtration is 
\[
\Cof(X_i\to X_{i+1})=\Cof(A_i \wedge_R B\to A_{i+1} \wedge_R B) \we \Cof(A_i\to A_{i+1}) \wedge_R B.
\]
The associated graded complex of the filtration $A_{\bu}$ is a projective $R$-resolution of $A$ by $R$-modules denoted $F_i$.
Therefore the homotopy of $E^0(\Xb)_*$ is given by $\pi_* (F_i \wedge_R B) \iso \pi_* F_i \tensor_{\pi_*R} \pi_* B$, see Proposition 3.9 in Chapter 3 of \cite{EKMM}.
As $\pi_* \sus^i F_i$ is a projective $\pi_* R$-resolution of $\pi_* A$, by construction, the $E_2$ page of the spectral sequence is $\Tor^{\pi_* R}_*(\pi_*A,\pi_*B)_*$.
\end{proof}

It should be noted that there are potentially other constructions of this spectral sequence.
One in particular where a proof of the multiplicative structure was claimed can be found in \cite{BakerLazarev}.
The proof they supply is, unfortunately, incorrect.
See Subsection \ref{subsubsec:previous work} for more details.

\begin{thm}[Multiplicativity of the K\"unneth spectral sequence]
\label{thm: mult of KSS}
Let $R$ be a commutative $S$-algebra, and $A$ and $B$ be $R$-algebras.
The K\"unneth spectral sequence is multiplicative.
Further, the filtration of the KSS has an $\Hi$-structure that filters the commutative $S$-algebra structure of $A\wedge_R B$.
\end{thm}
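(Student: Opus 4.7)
The plan is to exhibit the K\"unneth filtration $\Xb$ of $X := A \wedge_R B$ as the relative smash $\Gb'(\Ab, c(B)_{\bu})$, where $\Ab$ is the filtration of $A$ built from a free $R_*$-resolution of $A_*$ via Proposition \ref{prop:resolution to filtration} and $c(B)_{\bu}$ is the constant filtration on $B$. Under this identification, both the multiplicative and the $\Hi$ structure on $\Xb$ will be transported from the corresponding structures on the two factors, so no new lifting problem against $\Xb$ itself (which is neither free nor, in general, strictly exact once smashed with $B$) has to be solved.

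For multiplicativity, Corollary \ref{cor:mult} equips $\Ab$ with a filtered product $\mu^A_{\bu} \colon \Gb'(\Ab, \Ab) \to \Ab$, while the constant filtration $c(B)_{\bu}$ is strictly multiplicative via $\mu_B$. Symmetric monoidality of $R$-modules yields the bookkeeping identification
\[
\Gamma'_n(\Xb, \Xb) \;\iso\; \Gamma'_n(\Ab, \Ab) \wedge_R (B \wedge_R B),
\]
so $\mu^X_n := \mu^A_n \wedge_R \mu_B$ assembles into a filtered product lifting the product on $A \wedge_R B$. This exhibits $\Xb$ as a multiplicative filtration, and by the general principle recalled in Section \ref{sec:Filtrations and Complexes} this endows the KSS with a multiplicative pairing.

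For the $\Hi$ statement, Corollary \ref{cor:Hinfty structure on filtration} supplies an $\Hi$-filtration $\xi^A_r \colon \tGamma^r_{\bu,\bu}(\Ab) \to \Ab$, and the commutativity of $B$ provides structure maps $\xi^B_r \colon E\Sigma_{r+} \wedge_{\Sigma_r} B^{\wedge_R r} \to B$. Using the identification $\Gamma'^r_n(\Xb) \iso \Gamma'^r_n(\Ab) \wedge_R B^{\wedge_R r}$ (with the diagonal $\Sigma_r$-action on the right) and a cellular $\Sigma_r$-equivariant diagonal on $E\Sigma_r$, I would define the filtered extended power on $\Xb$ as the composite
\[
\tGamma^r_{k,n}(\Xb) \lra \bigcup_{k_1+k_2=k} \tGamma^r_{k_1,n}(\Ab) \wedge_R \bigl(E\Sigma^{(k_2)}_{r+} \wedge_{\Sigma_r} B^{\wedge_R r}\bigr) \lra A_{n+k} \wedge_R B = X_{n+k},
\]
where the second arrow is $\xi^A_r \wedge_R \xi^B_r$ on each summand.

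The hard step will be checking that the maps $\xi^X_r$ so produced satisfy the compatibility squares of Definition \ref{dfn:Hinfty filtration}. Following the template of the proof of Corollary \ref{cor:Hinfty structure on filtration}, at each inductive stage the two competing ways of filling in such a square are candidate lifts of one and the same underlying $R$-module map (the commutative $R$-algebra structure map of $A \wedge_R B$) along filtrations that satisfy the hypotheses of the Comparison Theorem \ref{thm: Comparison Theorem}; its homotopy-uniqueness clause then furnishes the required homotopies. The same argument shows that maps of commutative $R$-algebras induce maps of $\Hi$-filtrations on the corresponding K\"unneth spectral sequences, completing the proof.
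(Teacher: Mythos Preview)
Your proposal is correct and follows essentially the same route as the paper: identify the K\"unneth filtration with $\Gb'(\Ab, c(B)_{\bu})$, transport the multiplicative structure from Corollary~\ref{cor:mult} on $\Ab$ together with the product on $B$ via the twist, and for the $\Hi$ part combine Corollary~\ref{cor:Hinfty structure on filtration} on $\Ab$ with the commutative structure on $B$ through a cellular $\Sigma_r$-equivariant diagonal on $E\Sigma_r$. This is exactly what the paper does, only written out a bit more explicitly there.

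One caution about your final paragraph: you suggest verifying the $\Hi$ compatibility squares by invoking the uniqueness clause of the Comparison Theorem with $\Xb$ as target, but as you yourself observed earlier, $\Xb = \Ab \wedge_R B$ need not be exact, so Theorem~\ref{thm: Comparison Theorem} does not apply directly with $\Xb$ as the receiving filtration. No such step is actually required: the compatibility already holds on the factors---up to homotopy on $\Ab$ by Corollary~\ref{cor:Hinfty structure on filtration}, and on the nose on $c(B)_{\bu}$ since $B$ is a genuine commutative $R$-algebra---and smashing these over $R$ along the cellular diagonal preserves it. This is how the paper handles it (tersely), and it is consistent with your own earlier remark that ``no new lifting problem against $\Xb$ itself \ldots\ has to be solved.''
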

\begin{proof}
We will use corollaries \ref{cor:mult} and \ref{cor:Hinfty structure on filtration} to show that the K\"unneth filtration is not only multiplicative but has an $\Hi$-structure.
As $A$ is a commutative $R$-algebra and $\Ab$ is exact and projective, there exists a map of filtrations
\[
 \mu_{\bu}: \Gamma(\Ab,\Ab)_{\bu} \lra \Gamma'(\Ab,\Ab)_{\bu} \lra \Ab
\]
by \ref{cor:mult}.
The commutative $R$-algebra structure on $A \wedge_R B$ is the composition
\[
 (A \wedge_R B) \wedge_R (A\wedge_R B) \sr{1\wedge \tau \wedge 1} \lra (A \wedge_R A) \wedge_R (B \wedge_R B) \lra A\wedge_R B.
\]
We can use $\mu_{\bu}$ above to obtain a filtered version of this map.
Using the fact that there is a filtered/levelwise twist map $\tau_{\bu}$ and that $\Gb(\Xb,\Yb\wedge Z)=\Gb(\Xb,\Yb)\wedge Z$, we obtain the 
\[
 \Gb'(\Ab\wedge_R B, \Ab\wedge_R B) \lra \Ab \wedge_R B
\]
as the composition of $\mu_{\bu}$ smashed with the product of $B$ and the following
\[
 \Gb'(\Ab\wedge_R B, \Ab\wedge_R B)=\Gb'(\Ab,B \wedge_R \Ab\wedge_R B) \sr{1\wedge \tau_{\bu} \wedge 1}\lra \Gb'(\Ab, \Ab\wedge_R B\wedge_R B)=\Gb'(\Ab, \Ab)\wedge_R B \wedge_R B.
\]
More explicitly, the maps 
\[
 A_i \wedge A_j \lra A_i \wedge_R A_j \lra A_{i+j}
\]
can be smashed with the product of $B$ to obtain maps
\[
 A_i \wedge_R B \wedge A_j \wedge_R B \lra A_i \wedge_R B \wedge_R A_j \wedge_R B \lra A_i \wedge_R A_j \wedge_R B \wedge_R B \lra A_{i+j} \wedge_R B
\]

That the associated spectral sequence to such a multiplicative filtration is multiplicative is can be found in \cite{DuggerMultSS} by Dugger.
For a slightly different proof see \cite{TilsonThesis}.

Showing that the filtration is an $\Hi$-filtration is similar.
As $B$ is a commutative $R$-algebra it has a natural $\Hi$-structure.
We can then construct
\[
\tGb'^r(\Ab,c(B)) \lra \Ab \wedge_R B
\]
using the $\Hi$-strucutre on $\Ab$ constructed in \ref{cor:Hinfty structure on filtration}, $\Hi$-structure on $B$, and a cellular approximation of the diagonal map on $E\Sigma_r$.
\end{proof}

\begin{rmk}
Relaxing the assumptions on the commutativity of $R$ is an active area of interest that we are pursuing. Of particular interest is the case when $A$ and $B$ are $R$-algebras where $R$ is only an $E_3$-algebra.
See our forthcoming joint work with Katth\"an on $\HH_*(e_n;\F_p)$ for more details.
\end{rmk}

\subsubsection{Previous work}
\label{subsubsec:previous work}
Previous work of Baker-Lazarev has appeared that claims to prove that the KSS is multiplicative.
Their proof relied on an alternate construction of the KSS.
This construction does not yield the KSS as it does not give a filtration.
However, if we supposed it did give rise to a filtration then that filtration, due to the nature of its construction, would have a trivial homotopy colimit whenever our base ring spectrum had homotopy groups of finite homological dimension.
Their filtration is constructed in two steps.
First they realize the resolution $\F_* \to \pi_*A$ used to compute $\Tor^{\pi_*R}(A,B)$ as a resolution of $R$-modules $F_* \to A$.
They then propose to filter $A$ by the sequence of spectra $A_i:=cof(F_i \to F_{i-1})$ for $i>0$ and $A_0:=F_0$.
If $\pi_*R$ has finite homological dimension then the $F_i$ are eventually trivial $R$-modules and so the $A_i$ must be trivial $R$-modules in the same range.
This ensures that $hocolim_i A_i$ can not be weakly equivalent to $A$ whenever $\pi_*A$ has a finite length projective resolution as a $\pi_*R$-module.

\subsection{Algebraic operations}
\label{subsec:algebraic operations}
Since we have shown that the K\"unneth filtration has an $\Hi$-structure we might expect that there is a theory of algebraic operations in $\Tor$ that arise from the structure induced on the associated graded complex of the K\"unneth filtration.
We follow the outline in chapter IV section 2 of \cite{HRS} in constructing an algebraic $\Hi$ structure on the resolution that we will use to compute $\Tor^{R_*}(A_*,B_*)$.
Our construction of such a structure mirrors that of Bruner's in \cite{HRS} at almost every stage, which, in turn, follows \cite{MaySteenrod}.

\subsubsection{The bar construction}
We work with an explicit model for $\Tor^{R_*}(A_*,B_*)$ which is given by $H_*(B^{\mc{F}}(A_*)\tensor_{R_*} B_*)$.
Here, we use $B^{\mc{F}}_{\bu}(A_*)$ to denote the bar construction associated with the free $R_*$-module comonad $\mc{F}$.
It provides us with a preferred free $R_*$-resolution of $A_*$, \cite{Weibel}.
In this subsection, $\bu$ as a subscript will be used to denote simplicial objects as opposed to filtered ones.

We use the (co)monadic bar construction associated with the free-forgetful adjunction as opposed to the two-sided bar construction arising from the tensor product.
This is necessary because tensoring with $R$ does not necessarily create free modules, which is what we need for our construction.
We follow the discussion in \cite{Weibel} sections 8.4.6, 8.6.8, and 8.6.14.

\begin{dfn}
An \underline{augmented simplicial object} $X_{\bu}$ in a category $\mc{C}$ is a simplicial object $X_{\bu}$ in $\mc{C}$ with a morphism $\varepsilon : X_0 \to X_{-1}$ such that 
\[
\varepsilon \partial_0=\varepsilon \partial_1: X_1 \to X_0 \to X_{-1}.
\]
We call such an augmented simplicial object aspherical if $\pi_n X_{\bu}\iso 0$ for $n \neq 0$ and $\pi_0 X_{\bu}\iso X_{-1}$.
\end{dfn}
When we have an augmented simplicial set and an associated simplicial set given by forgetting the augmentation, we will denote the unaugmented simplicial set by $uX_{\bu}$ and the augmented simplicial set $X^+_{\bu}$. 
Such aspherical augmented simplicial sets $X^+_{\bu}$ are referred to as simplicial resolutions of $X_{-1}$.

We will construct an augmented simplicial $R$-module that is a free resolution of $A$.
The adjunction 
\[
F : Sets \rightleftharpoons R-mods : U
\]
between the category of $R$-modules and the category of Sets gives rise to a comonad $\mc{F}:=F\circ U$.
Here, $F$ assigns to any set the free $R$-module on that set and $U$ gives the underlying set of any $R$-module.
The comonad $\mc{F}$ assigns to every $R$-module the free $R$-module generated by its underlying set.
The counit of the adjunction, denoted by $\epsilon$, is the counit of the comonad and the unit of the adjunction, denoted by $\eta$ allows us to define the comultiplication; see \cite{Weibel} section 8.6.
We obtain a simplicial object $B^{\mc{F}}_{\bu}(A)$ whose $n$-simplices are given by $B_n := B^{\mc{F}}_n(A)=\mc{F}^{n+1}(A)$.
The augmentation is given by the counit $\mc{F}(A) \to A$.

\begin{prop}
The bar construction $B^{\mc{F}}_{\bu}(A)^+$ is an aspherical augmented simplicial $R$-module which is level-wise free.
The underlying augmented simplicial set of $B^{\mc{F}}_{\bu}(A)^+$ has an extra degeneracy
\[
\xymatrix{
A=B_{-1} \ar@/^1pc/[r]^{s_0} &
B_0 \ar[l]_{\varepsilon} \ar@/^1pc/[r]^{s_1}&
B_1 \ar[l]_d \ar@/^1pc/[r]^{s_2}& 
B_2 \ar[l]_d&
\cdots
}
\]
such that each $s_i$ is a map of $R$-modules except $s_0: B_{-1} \to B_0$.
The associated chain complex $T(uB^{\mc{F}}_{\bu}(A))_*$ is chain homotopy equivalent to the chain complex $\underline{A}_*$ concentrated in homological degree $0$ with $\underline{A}_0=A$.
\end{prop}
The proof follows from the construction of an extra degeneracy.
One does have to be careful as the extra degeneracy is only a map of sets when $n=0$.
Despite this, all of the necessary identities hold.

We will use the splitting $s$ of $T((B^{\mc{F}}_{\bu}(A))^+)_*$ explicitly which we formalize in the following definition.
\begin{dfn}
\label{dfn:semi-split}
A resolution of an $R$-module $B$
\[
\xymatrix{
B &
G_0 \ar[l]_{\varepsilon}&
G_1 \ar[l]_d& 
G_2 \ar[l]_d&
\cdots
}
\]
is \underline{semi-split} if there exist maps of $R$-modules $s: G_i \to G_{i+1}$ and a map of sets $s_{-1}:B \to G_0$ such that $\varepsilon s_{-1}=1_B$ and $ds+sd=1$
\end{dfn}

\subsubsection{Construction of the operations}
In the following proposition, we focus on the case where $A$ and $B$ are both graded commutative $R$-algebras and the resolutions are given by taking the bar construction as described above.
The product is a $\rho$ equivariant map $f: A^r \lra A$ for any subgroup of $\rho \subset \Sigma_r$.
In this situation, we get algebraic operations in $\Tor^R(A,B)$ by applying $-\tensor_R B$ to the picture below and appealing to May's machinery. 
We will apply the results in this section to the K\"unneth spectral sequence.
There, we will be interested in the case where $R,A$ and $B$ are the homotopy groups of commutative $S$-algebras as discussed in the previous sections.
The following is an analogue of Lemma 2.3 in Chapter IV of \cite{HRS}

\begin{prop}
\label{prop:operations} 
Let $\rho$ be a subgroup of $\Sigma_r$.
Let $\mc{V}_{*}$ be any $\Z\left[ \rho\right]$ free resolution of $\Z$ such that $\mc{V}_0=\Z\left[\rho\right]$ with generator $e_0$.
Let $X$ and $Y$ be $R$-modules.
Let $X \lla F_0 \lla F_1 \lla \cdots$ be a free resolution of $X$ by $R$-modules, $Y \lla G_0 \lla G_1 \cdots$ a resolution of $Y$ by $R$-modules which is semi-split in the sense of definition \ref{dfn:semi-split}.
Let $f:X^r \lra Y$ be a $\rho$-equivariant map of $R$-modules with $\rho$ acting by permuting the factors on $X^r$ and trivially on $Y$.
Let $\rho$ act on $F^r_{*}$ by permuting the factors, trivially on $G_{*}$, and diagonally on $\mc{V}_{*} \tensor F^r_{*}$.
Give $\mc{V}_{*} \tensor F^r_{*}$ the $R$-module structure induced by that on $F^r_{*}$.
Then, there exists a $\rho$ equivariant chain map $\varphi_{i,j}: \mc{V}_i \tensor_{\Z} F^r_j \lra G_{i+j}$ lifting $f$ that is unique up to equivariant chain homotopy.
\end{prop}

\begin{proof}
Our method of proof will be much like that in \cite{HRS}.
We will first construct the lift $\varphi_{*,*}$ on a given $R \left[ \rho \right]$ basis by use of the adjunction between $R \left[ \rho \right]$-modules and $\Z$-modules.
This has the effect of constructing the $R$ map.
The construction of the equivariant chain homotopy will be similar.

Since $F_{*}$ is free over $R$, so is $F^r_{*}$.
Since $\mc{V}_{*}$ is free as a $\Z[\rho]$ module, it follows that $\mc{V}_{*} \tensor F^r_{*}$ is free over $R$ and $\Z \left[ \rho \right]$. This gives us that $\mc{V}_{*} \tensor F^r_{*}$ is $R \left[ \rho \right]$ free.
$F^r_{*}$ augments to $X^r$ via $\varepsilon^r: F^r_0 \lra X^r$. 
The comparison lemma of homological algebra gives us a lift of $f$, 
\[
\widehat{f}:F^r_{*} \lra G_{*}
\] 
which is isomorphic to
\[
\widehat{f}: \langle e_0\rangle  \tensor F^r_{*} \lra G_{*}.
\]
We extend this map equivariantly to obtain 
\[
\varphi_{0,*}: \mc{V}_0  \tensor F^r_{*} \lra G_{*}.
\]

We have now constructed $\varphi_{0,*}$.
Clearly, $\varepsilon_Y\varphi_{0,0}=f \varepsilon_X^r$ by construction, which is the only requirement of $\varphi_{0,0}$ we make.
It remains to construct the rest of the $\varphi_{i,j}$ and verify that they form a chain map.
Let $(i',j')<(i,j)$ if $i'<i$ and $j'\leq j$, or $i'\leq i$ and $j'<j$. 
Suppose we have constructed $\varphi_{i',j'}$ for all $(0,0)<(i',j')<(i,j)$ such that the collection of $\varphi_{i',j'}$'s satisfy the chain map condition in the range they are defined.
We use $\theta$ to denote the natural isomorphism
\[
\theta: \Hom_{\Z[\rho]-R}(F(X),Y) \iso \Hom_R(X,U(Y))
\]
between $R[\rho]$-modules and $R$-modules where $F$ and $U$ are the left and right adjoints respectively.
Then, we define the adjoint $\theta(\varphi_{i,j})$ of the desired map $\varphi_{i,j}$ to be
\[
\theta(\varphi_{i,j})=s(\theta(\varphi_{i-1,j} \circ d \tensor 1) + \theta(\varphi_{i,j-1} \circ 1 \tensor d)).
\]
Since $(0,0)<(i,j)$, we are in the range where $s:G_{i+j-1}\to G_{i+j}$ is a chain null homotopy.
This formula makes more sense in light of the following diagram:

\[
	\xymatrix{
	\mc{V}_i \tensor F^r_j \ar[dd]_{d \tensor 1 \oplus 1 \tensor d} 
	\ar@{.>}[rrr]&
 	&
 	&
 	G_{i+j} \ar[dd]_d
 	\\
 	&
 	&
 	&
 	\\
 	\mc{V}_{i-1} \tensor F^r_j \oplus \mc{V}_i \tensor F^r_{j-1} 
	  \ar[rrr]^(.6){\theta(\varphi_{i-1,j}) +\theta(\varphi_{i,j-1})}
 	&
 	&
 	&
 	G_{i+j-1} \ar@/_1pc/[uu]_s
	}
	\]
All the maps in the above are maps of $R$-modules, not $R[\rho]$-modules; this is the reason for the use of the adjunction.
We have the map defined on a $\Z[\rho]$-basis.
We extend it equivariantly to all of $\mc{V}_i \tensor F^r_j$ using $\theta$, the adjunction isomorphism. 
Now we must verify that $\varphi_{i,j}$, as defined, is indeed a chain map.
Specifically, we wish to show that
\[
d \circ \varphi_{i,j}= \varphi_{i-1,j}\circ d \tensor 1 + \varphi_{i,j-1} \circ 1 \tensor d.
\]
This equation is true if and only if 
\[
\theta (d \circ \varphi_{i,j})= \theta(\varphi_{i-1,j}\circ d \tensor 1 + \varphi_{i,j-1} \circ 1 \tensor d)
\]
By naturality, $\theta(d \circ \varphi)=Ud \circ \theta(\varphi)$. 
Note that $d$ is always a map out of a free $R$-module and that the functor $U$ just forgets that $d$ is an $R$ map, so we will not distinguish between $Ud$ and $d$ as they are the same map element wise.
As we have that
\begin{eqnarray*}
\theta(d \varphi_{i,j}) & = & Ud \theta(\varphi_{i,j})\\
  & = & d \theta(\varphi_{i,j}) \\
  & = & d(s(\theta(\varphi_{i-1,j} \circ d \tensor 1 + \varphi_{i,j-1} \circ 1 \tensor d)) \\
  & = & (1-sd)(\theta(\varphi_{i-1,j} \circ d \tensor 1 + \varphi_{i,j-1} \circ 1 \tensor d))
\end{eqnarray*}
we must show that 
\[
sd(\theta(\varphi_{i-1,j} \circ d \tensor 1) + \theta(\varphi_{i,j-1} \circ 1 \tensor d))
\]
is zero.
Recall that $\varphi_{i-1,j}$ and $\varphi_{i,j-1}$ already commute with differentials below them.
Consider the following expression.

\begin{align*}
& sd(\theta(\varphi_{i-1,j} \circ d \tensor 1 + \varphi_{i,j-1} \circ 1 \tensor d))  =  s\theta(d \varphi_{i-1,j} \circ d \tensor 1 + d \varphi_{i,j-1} \circ 1 \tensor d) \\
 &= \theta((\varphi_{i-2,j}\circ d \tensor 1+\varphi_{i-1,j-1}\circ 1 \tensor d) \circ d \tensor 1 + (\varphi_{i-1,j-1} \circ d \tensor 1 + \varphi_{i,j-2} \circ 1 \tensor d) \circ 1 \tensor d) \\
 &= \theta((0-\varphi_{i-1,j-1} \circ d \tensor 1 \circ 1 \tensor d) + (\varphi_{i-1,j-1} \circ d \tensor 1 \circ 1 \tensor d +0) \\
 & = 0
\end{align*}

We now have chain maps $\varphi_{i,j}: \mc{V}_i \tensor F^r_j \lra G_{i+j}$ of $R_*$-modules defined on a $\Z[\rho]$ basis.
To obtain the desired, map we extend equivariantly. 

This choice of lifts is unique up to equivariant chain homotopy.
Suppose we had $\psi_{i,j}: \mc{V}_i \tensor F^r_j \to  G_{i+j}$, another lift of $f$.
From the data of $\psi,\varphi$ and $s:G_{*} \to G_{*+1}$ we construct a $\rho$-equivariant chain null-homotopy $H_{i,j}: \mc{V}_i \tensor F^r_j \to G_{i+j+1}$.
We define $H_{i,j}$ by induction.
Its adjoint is given by
\[
\theta(H_{i,j}) = s \circ (\varphi_{i,j}-\psi_{i,j}-H_{i-1,j}(d \tensor 1)-H_{i,j-1}(1 \tensor d))
\]
for $i,j$ both nonnegative and $0$ otherwise.
We do not differentiate between $H$ and its adjoint as one is a restriction of the other to a $\rho$-basis and it is notationally clumsy.

We now must verify that $H$ is a chain homotopy between $\varphi$ and $\psi$; that is,
\[
dH_{i,j} + H_{i-1,j}(d\tensor 1) + H_{i,j-1}(1\tensor d)=\varphi_{i,j}-\psi_{i,j}.
\]
We suppose that $H_{i',j'}$ is a chain homotopy for all $(i',j')\leq (i,j)$ as described by the above formula.
We now compute $dH_{i,j}$:
\begin{align*}
&
dH_{i,j}=
ds \circ (\varphi_{i,j}-\psi_{i,j}-H_{i-1,j}\circ(d \tensor 1)-H_{i,j-1}\circ(1 \tensor d))\\
&
=(1-sd) \circ (\varphi_{i,j}-\psi_{i,j}-H_{i-1,j}\circ(d \tensor 1)-H_{i,j-1}\circ(1 \tensor d)).
\end{align*}
Consider
\begin{align*}
&d(\varphi_{i,j}-\psi_{i,j}-H_{i-1,j}\circ(d \tensor 1)-H_{i,j-1}\circ(1 \tensor d))\\
&=d\varphi_{i,j}-d\psi_{i,j}-d(H_{i-1,j}\circ(d \tensor 1)+H_{i,j-1}\circ(1 \tensor d))\\
&=\varphi_{i-1,j}(d\tensor 1) +\varphi_{i,j-1}(1 \tensor d)-\psi_{i-1,j}(d\tensor 1) -\\ 
&\qquad \psi_{i,j-1}(1 \tensor d)-d(H_{i-1,j}\circ(d \tensor 1)+H_{i,j-1}\circ(1 \tensor d))
\end{align*}
We have that
\begin{align*}
&
dH_{i-1,j}(d\tensor 1)=(-H_{i-2,j}(d\tensor 1) - H_{i-1,j-1}(1\tensor d)+\varphi_{i-1,j}-\psi_{i-1,j})\circ(d\tensor 1)\\
&
=-H_{i-1,j-1}(1\tensor d)(d \tensor 1)+(\varphi_{i-1,j}-\psi_{i-1,j})(d\tensor 1)
\end{align*}
as well as
\begin{align*}
&
dH_{i,j-1}(1\tensor d)=(-H_{i-1,j-1}(d\tensor 1) - H_{i,j-2}(1\tensor d)+\varphi_{i,j-1}-\psi_{i,j-1})\circ(1\tensor d)\\
&
=-H_{i-1,j-1}(d\tensor 1)(1 \tensor d)+(\varphi_{i,j-1}-\psi_{i,j-1})(1\tensor d)
\end{align*}
and so 
\[
d(\varphi_{i,j}-\psi_{i,j}-H_{i-1,j}\circ(d \tensor 1)-H_{i,j-1}\circ(1 \tensor d))
\]
simplifies to
\[
-H_{i-1,j-1}(d\tensor 1)(1 \tensor d)-H_{i-1,j-1}(1\tensor d)(d \tensor 1)
\]
which is $0$ as $(d\tensor 1)(1\tensor d)=-(1\tensor d)(d\tensor 1)$.
Now, when $(i,j)=(0,0)$, we have
\[
H_{0,0}=(\varphi_{0,0}-\psi_{0,0}-H_{-1,0}\circ(d \tensor 1)-H_{0,-1}\circ(1 \tensor d))=(\varphi_{0,0}-\psi_{0,0})
\]
as both $H_{-1,0}$ and $H_{0,-1}$ are trivial by construction.
By assumption, we have that $\varepsilon\varphi_{0,0}=f\varepsilon^r=\varepsilon\psi_{0,0}$.
We then extend $H_{i,j}$ to an equivariant chain homotopy as desired.
\end{proof}
The above is enough to construct power operations in $\Tor$.

\begin{cor}
\label{cor:operationsTor}
For $R$ a (graded) commutative algebra, and $A,B$ (graded) commutative $R$-algebras, there exist additive power operations in $\Tor^R_*(A,B)_*$ 
\[
Q_i: \Tor^{R}_s(A,B)_t \lra \Tor^{R}_{i+ps}(A,B)_{pt}
\]
whenever $A$ and $B$ have characteristic $p$.
\end{cor}
\begin{proof}
We apply the above proposition to the situation where $A$ plays both the role of $X$ and $Y$ and the resolutions $F_{*}$ and $G_{*}$ are given by by the associated chain complex of the bar construction $T(uB^{\mc{F}}_{\bu}(A))$ and $\rho=C_p$, the cyclic group of order $p$.
Let $EC_{p*}$ be a resolution of $R$ by free $R\left[C_p\right]$-modules.
We then have the 
\[
\varphi_{*,*}: EC_{p*}\tensor T(uB^{\mc{F}}_{\bu}(A))^p \to T(uB^{\mc{F}}_{\bu}(A))
\]
which becomes
\[
\Phi_{*,*}: EC_{p*}\tensor (T(uB^{\mc{F}}_{\bu}(A)) \tensor_R B)^p \to T(uB^{\mc{F}}_{\bu}(A))\tensor_R B
\]
upon tensoring with $\mu_B: B^p \to B$.
The operation
\[
Q_i: \Tor^{R}_s(A,B)_t \to \Tor^{R}_{i+ps}(A,B)_{pt} 
\]
is given by $Q_i(x):=\Phi_{i,ps}(e_i \tensor x^{\tensor p})$.
(When appropriately re-indexed, these operations satisfy the Cartan formula and Adem relations \cite{MaySteenrod}.)
\end{proof}
While it appears that the definition of these operations depend on the fact that we resolved $A$ as opposed to $B$ this is not the case.
There are quasi-isomorphisms
\[
\widetilde{A}_* \tensor_R B \sr{\widetilde{}}\lla \widetilde{A}_* \tensor_R \widetilde{B}_* \sr{\widetilde{}}\lra A\tensor_R \widetilde{B}_*
\]
where $\widetilde{A}$ is a projective resolution of $A$ and $\widetilde{B}$ is a projective resolution of $B$ as $R$-modules.

Unfortunately, these operations do not turn out to be very interesting.
An application of the below result of Tate will show that these operations will vanish.

\begin{thm}[Tate \cite{Tate}]
\label{thm: Tate}
If $R$ is a Noetherian local ring and $I$ is an ideal of $R$, then there exists a free $R$-resolution of $R/I$ that is a graded commutative DGA.
\end{thm}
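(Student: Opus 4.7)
The plan is to follow Tate's original inductive construction, building the resolution step by step by adjoining generators to a graded commutative DGA in order to kill successive homology classes, while always maintaining freeness as an $R$-module. The Noetherian hypothesis is what makes the inductive procedure manageable, since at each stage the module we need to kill is finitely generated.

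First, I would set up the base step. Choose a finite generating set $x_1,\ldots,x_n$ of $I$ (finite since $R$ is Noetherian) and form the Koszul complex $K^{(1)} = R\langle T_1,\ldots,T_n\rangle$, the exterior $R$-algebra on odd-degree generators $T_i$ of degree $1$, with differential $dT_i = x_i$ extended by the graded Leibniz rule. This is manifestly a graded commutative DGA, free as an $R$-module, and $H_0(K^{(1)}) = R/I$ by construction.

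Next, proceed inductively on degree. Suppose $K^{(m)}$ has been constructed as a graded commutative DGA, free over $R$, with $H_0 = R/I$ and $H_i = 0$ for $0 < i < m$. Since $R$ is Noetherian and $K^{(m)}$ is degreewise finitely generated, $H_m(K^{(m)})$ is a finitely generated $R$-module; pick cycles $z_1,\ldots,z_r$ of degree $m$ whose classes generate it. If $m+1$ is odd, adjoin exterior generators $T_\alpha$ of degree $m+1$ with $dT_\alpha = z_\alpha$; if $m+1$ is even, adjoin divided-power generators $U_\alpha^{(k)}$ in degree $k(m+1)$ with $dU_\alpha^{(k)} = z_\alpha U_\alpha^{(k-1)}$. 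The divided-power structure is exactly what is needed to keep the enlarged algebra graded commutative while remaining free as an $R$-module (an ordinary polynomial generator would not interact correctly with the Leibniz rule over a general base, and would also typically fail to be a resolution). Setting $K^{(m+1)}$ to be the resulting DGA, the chosen classes in $H_m$ are now boundaries.

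The main obstacle, and where some care is required, is the verification that $H_i(K^{(m+1)}) = 0$ for $0 < i \leq m$, so that we really have made progress without disturbing lower degrees. This is handled by filtering $K^{(m+1)}$ by the total weight in the new generators $T_\alpha$ or $U_\alpha^{(k)}$: the associated graded is a tensor product of $K^{(m)}$ with a Koszul- or divided-power model, and a short spectral sequence or direct double-complex argument shows that the only effect in the range $i\le m$ is to zero out the classes $[z_\alpha]$ we targeted. Finally, setting $K = \mathrm{colim}_m K^{(m)}$ yields a graded commutative DGA that is free as an $R$-module (being a filtered colimit of free modules along split inclusions) and whose underlying complex is a free resolution of $R/I$, which is exactly what the theorem asserts.
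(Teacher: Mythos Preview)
The paper does not supply its own proof of this theorem: it is quoted as a result of Tate, with the single remark that ``Tate proves the above by explicitly constructing a graded commutative DGA,'' and then the paper moves immediately to a corollary. So there is nothing in the paper to compare your argument against beyond that one sentence.

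Your proposal is precisely the explicit construction the paper alludes to: begin with the Koszul complex on a generating set of $I$, then inductively adjoin exterior generators in odd degrees and divided-power generators in even degrees to kill successive homology classes, and pass to the colimit. This is Tate's original argument, and your sketch is correct. The use of divided powers (rather than polynomial generators) in the even case is exactly the point that makes the construction work over a general ring, and your filtration/spectral-sequence outline for checking that lower homology is undisturbed is the standard verification. One small comment: the Noetherian hypothesis is convenient for keeping each stage finitely generated, but the construction itself goes through without it provided one allows infinitely many new variables at each step; the paper simply quotes Tate's hypotheses as stated.
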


Tate proves the above by explicitly constructing a graded commutative DGA.
It can also be extended as follows.
\begin{cor}
For $R$ as above and any finitely generated commutative $R$-algebra $A$, there exists a free $R$-resolution of $A$ that is a graded commutative DGA.
\end{cor}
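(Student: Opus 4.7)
My plan is to reduce the algebra case to Tate's theorem by presenting $A$ as a quotient of a polynomial $R$-algebra and then running the Tate construction over that polynomial ring. First I would choose algebra generators $a_1,\ldots,a_n$ of $A$ and form the surjection $\pi\colon B:=R[x_1,\ldots,x_n]\twoheadrightarrow A$ sending $x_i\mapsto a_i$. Hilbert's basis theorem gives that $B$ is Noetherian, so $J:=\ker\pi$ is finitely generated, say by $f_1,\ldots,f_m$.

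Next I would view $B$ as a graded-commutative DGA concentrated in degree $0$; it is free as an $R$-module with basis the monomials in the $x_i$. I would then run Tate's construction to resolve $B/J=A$ as a $B$-module: first adjoin exterior generators $T_1,\ldots,T_m$ of homological degree $1$ with $dT_i=f_i$, producing the Koszul DGA $B\langle T_1,\ldots,T_m\rangle$. Inductively, whenever the current DGA has nonzero homology in some positive degree $k$, I would kill a generating set of $H_k$ by adjoining exterior generators in odd degree (for even-degree cycles) or divided-power polynomial generators in even degree (for odd-degree cycles), exactly as in Tate's proof of the cyclic case. Noetherianness of $B$ ensures the generating sets at each stage are finite.

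The resulting DGA $X_\bullet$ is graded-commutative by construction, and as a $B$-module it is free, since exterior, polynomial, and divided-power algebras on a single generator are each free over the base and the construction is an iterated such adjunction. Because $B$ is free as an $R$-module, $X_\bullet$ is free as an $R$-module as well, and its graded-commutative $B$-DGA structure restricts to the desired graded-commutative $R$-DGA structure. The main obstacle I anticipate is that Tate's theorem was cited under a local hypothesis while $B=R[x_1,\ldots,x_n]$ is not local; I would need to verify that the stepwise Tate procedure does not use locality. This should be routine, since locality is only invoked in Tate's minimality arguments, not in the existence of a (non-minimal) graded-commutative free DGA resolution.
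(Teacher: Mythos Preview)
Your proposal is correct and follows essentially the same route as the paper: present $A$ as a quotient of $B=R[x_1,\ldots,x_n]$, apply Tate's construction over $B$, and then observe that free over $B$ implies free over $R$. You are in fact more careful than the paper in flagging that $B$ is not local and explaining why Tate's inductive adjunction of exterior and divided-power variables still produces a graded-commutative free $B$-DGA resolution without the local hypothesis; the paper simply invokes ``the result of Tate'' over $B$ without comment.
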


\begin{proof}
Suppose that $A$ is a finitely generated $R$-algebra, then $A$ is a quotient of $R[x_1, x_2, \ldots,x_n]$ by an ideal.
The result of Tate gives a free $R[x_1, x_2, \ldots,x_n]$-resolution of $A$ that is graded commutative as a DGA.
However, since  $R[x_1, x_2, \ldots,x_n]$ is free over $R$, so is the above resolution and we have the desired result.
\end{proof}

The operations above come from lifting null-homotopies of the difference of ways of ordering $p$-fold products of classes in the free resolution.
If the free resolution is graded commutative, then all of the $p$-fold products are equal and we can take the constant null-homotopy.
This lift will only give trivial operations. 
As the choice of lift is unique up to chain homotopy, the operations are necessarily trivial.

However, the filtration has the requisite structure, so we compute operations in $\pi_* A \wedge_R B$ using the naturality of the $\Hi$-structure and detect them in the KSS
\[
\Tor^{R_*}_*(A_*,B_*)_* \ssra \pi_* (A \wedge_R B).
\]
It might be said that the spectral sequence hides the operations in a lower filtration.
Even when the KSS collapses at $E_2$, there is a difference between operations in $\Tor$ and operations in homotopy as we will see in \ref{sec:comps}.
Any reference to operations will of course be to homotopical operations that only exist on the spectral sequence.
Essentially, the operations on induced by the $\Hi$-structure on the filtration lift one filtration higher than expected and so we see that they are not algebraic.
One could imagine attempting to construct operations on some later page of the spectral sequence.
However, there is no finite page at which all operations will eventually be defined algebraically.
Such a construction would imply that there is a limit to how far the homotopical operations can decrease filtration (beyond what is predicted by the $\Hi$-structure) and examples show that there is no lower bound to this.

\section{Computations}
\label{sec:comps}
In this section we use the above results to compute the action of the Dyer-Lashof algebra on various relative smash products of $\Hk$ where $\mathrm{k}$ is a field of prime order.
Our main use of the results in the preceding section is the naturality of the operations.
Ideally, we would be able to compute the operations algebraically, however we are unable to do this in light of the result of Tate, see Theorem \ref{thm: Tate}.
Thus we are left with computing the operations by using the naturality of the $\Hi$-structure on the filtrations.
These give geometrically defined operations on the level of commutative $\HF_p$-algebras as opposed to algebraic operations on the level of the $E_2$-page of a spectral sequence.
The operations on the homotopy of these commutative $\HF_p$-algebras are those that are constructed by in the sense of May.

The classes related by the operations in this section will be explicitly constructed in next section.
This particular construction gives an interpretation of the operations as a dependence relation between elements in the homotopy groups that is not obvious otherwise.
This is elaborated on further in Section  \ref{sec:applications and interpretations}.
Later, in Subsection \ref{subsec: odd case}, we will address the difference between the prime $2$ and odd primes.
The main difference is in the structure of the dual Steenrod.
There is also a difference between $ku_{(p)}$ and the Adams summand at odd primes.

The computation proceeds in three steps 

\begin{itemize}
 \item Use the Koszul complex to compute the $E_2$-pages of the KSS's converging to $\pi_* \Hk\wedge R \wedge_R \Hk$ and $\pi_* \Hk \wedge_R \Hk$,
 \item Determine the collapse of the spectral sequence as well as the product structure from the multiplicativity of the KSS,
 \item Use step 1 to compute the map of $E_2$-pages and deduce the action of the Dyer-Lashof algebra using this map and the computation of Steinberger \cite{HRS}.
\end{itemize}

To do this, it will help to have some notation in place.
This notation will also be used in section \ref{sec:applications and interpretations} when we interpret these computations.
\begin{dfn}
Given a diagram of commutative $S$-algebras
\[
\xymatrix{
R \ar[rr]^{\psi} \ar[dr]_{\varphi}&
&
R' \ar[dl]\\
&
\Hk&
}
\]
we denote 
\begin{itemize}
 \item the induced map of relative smash products by $\widetilde{\psi}: \Hk \wedge_R \Hk \to \Hk \wedge_{R'} \Hk$,
 \item the induced map of KSSs by $\widehat{\psi}_*: \Tor^{R_*}_s(\kk,\kk)_t \to \Tor^{R'_*}_s(\kk,\kk)_t$,
 \item the induced map of KSSs by $\check{\varphi}_*: \Tor^{R_*}_s(\HH_*(R;\kk),\kk)_t \to \Tor^{R_*}_s(\kk,\kk)_t$.
\end{itemize}
\end{dfn}

We fix a map of commutative $S$-algebras $\varphi: R \to \Hk$.
The map $\varphi$ makes $\Hk$ into a commutative $R$-algebra.
Therefore the action map of $R$ on $\Hk$ coming from said algebra structure is a map of commutative $S$-algebras.
This ensures that the map $\check{\varphi}$ of spectral sequences commutes with all of the extra structure present on the filtration, such as power operations.
To compute the action of the Dyer-Lashof algebra on $\pi_* \Hk \wedge_R \Hk$, consider the map of spectral sequences induced by $\varphi$
\[
\xymatrix{
\Tor^{R_*}_s(\HH_*(R;\kk),\kk)_t \ar@{=>}[rr] \ar[d]_{\check{\varphi}}&
&
\pi_{s+t} (\Hk \wedge R \wedge_R \Hk)\iso \HH_{s+t} (\Hk;\kk) \ar[d]^{\check{\varphi}}\\
\Tor^{R_*}_s(\kk,\kk)_t \ar@{=>}[rr]&
&
\pi_{s+t}(\Hk \wedge_R \Hk).
}
\]

When $\kk$ is a quotient of $R_*$ by a regular sequence, we can use a Koszul complex to compute the $E_2$-page of the spectral sequence.
The spectral sequence is multiplicative and the $E_2$-page is multiplicatively generated by classes on the 1-line of the spectral sequence.
As $d_r=0$ when restricted to the 1-line for every $r \geq 2$ we obtain a collapsing spectral sequence.
In our examples, $\varphi_*:R_* \to \kk$ is a regular quotient, and so both spectral sequences collapse at the $E_2$ page leaving us with
\[
\Tor^{R_*}_s(\HH_*(R;\kk),\kk)_t \iso \HH_{s+t}(\Hk;\kk)
\]
\begin{center} 
and 
\end{center}
\[
\Tor^{R_*}_s(\kk,\kk)_t \iso \pi_{s+t} \Hk \wedge_R \Hk.
\]

We will also need the following computational ingredients.
\begin{thm}[Milnor]
At the prime $2$, the dual Steenrod algebra is $\HH_*(\HF_2;\F_2) \iso \F_2 [\xi_1, \xi_2, \xi_3, \ldots]$.
The conjugation is given by 
\[
\oxi_i:=\chi_*(\xi_i)=\sum\limits_{\alpha \in Part(i)}\prod\limits_{n=1}^{l(\alpha)}\xi^{2^{\sigma(i)}}_{\alpha(n)}
\]
where $l(\alpha)$ is the length of the ordered partition $\alpha$ of $i$.
\end{thm}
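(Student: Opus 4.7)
The plan is to follow Milnor's original approach, reducing the Hopf-algebra structure of $\cA_* := \HH_*(\HF_2; \F_2)$ to the familiar action of the Steenrod algebra on $\HH^*(\RP^\infty; \F_2) \iso \F_2[x]$. First I would define $\xi_i \in \cA_{2^i - 1}$ as the duals, under the Kronecker pairing between $\cA$ and $\cA_*$, of the admissible monomials $\mathrm{Sq}^{2^{i-1}} \mathrm{Sq}^{2^{i-2}} \cdots \mathrm{Sq}^1$. Algebraic independence of the $\xi_i$ is verified by evaluating monomials in them on elements of $\HH^*(\RP^\infty; \F_2)$, and comparing Poincar\'e series with the known rank of the Steenrod algebra in each degree shows that the natural map $\F_2[\xi_1, \xi_2, \ldots] \to \cA_*$ is an isomorphism of graded commutative $\F_2$-algebras.

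Next I would determine the coproduct on these generators. The coaction of $\cA_*$ on $\HH^*(\RP^\infty; \F_2)$ must take the form $x \mapsto \sum_{i \geq 0} \xi_i \tensor x^{2^i}$ (with the convention $\xi_0 := 1$) for degree reasons; applying coassociativity to this coaction and using its multiplicativity forces
\[
\Delta \xi_n \;=\; \sum_{i+j=n} \xi_i^{2^j} \tensor \xi_j.
\]

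With the Hopf algebra structure established, the conjugation $\chi_*$ is determined by the antipode axiom $\mu \circ (\chi_* \tensor 1) \circ \Delta = \eta \circ \varepsilon$, which at $p = 2$ in positive degree becomes the recursion $\chi_*(\xi_n) = \sum_{j=1}^n \chi_*(\xi_{n-j})^{2^j}\, \xi_j$. Unwinding this recursion, each substitution of $\chi_*(\xi_{n-j})$ back into the sum appends a further factor $\xi_{\alpha_k}$ and squares the exponents of all previously selected factors; after all substitutions, the accumulated exponent on the $k$th factor $\xi_{\alpha_k}$ of a term indexed by an ordered partition $\alpha = (\alpha_1, \ldots, \alpha_l)$ of $n$ is $2^{\alpha_1 + \cdots + \alpha_{k-1}}$, which recovers the stated closed form (with the exponent $\sigma$ understood as the partial sum of prior parts of the partition).

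Past the classical identification of $\cA_*$ as a polynomial Hopf algebra, which is the bulk of Milnor's original work, the remaining content is purely combinatorial. The main obstacle is just keeping the bookkeeping honest when unwinding the antipode recursion, which I would handle by induction on $n$: verify that the proposed sum over ordered partitions of $n$ satisfies the antipode recursion, with the inductive step isolating the size of the first part $\alpha_1 = j$ and invoking the closed formula for $\chi_*(\xi_{n-j})$ raised to the $2^j$.
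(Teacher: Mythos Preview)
The paper does not actually prove this theorem: it is stated as a classical result due to Milnor and used as input for the computations in Section~\ref{sec:comps}, with no argument supplied. Your proposal is a faithful sketch of Milnor's original proof---defining the $\xi_i$ via the coaction on $\HH^*(\RP^\infty;\F_2)$, extracting the coproduct formula $\Delta\xi_n=\sum_{i+j=n}\xi_i^{2^j}\tensor\xi_j$ from coassociativity, and then unwinding the antipode recursion into a sum over ordered partitions---and the bookkeeping you describe for the partial-sum exponents is correct. There is nothing to compare here beyond noting that the paper simply quotes the result, whereas you have outlined its standard derivation.
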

The map $\chi: \Hk\wedge \Hk \to \Hk \wedge \Hk$ is the twist map of $S$-modules.

The following result gives us the action of the Dyer-Lashof algebra on $\HH_*(\HF_2;\F_2)$ and is due to Steinberger; see Chapter 3 Thm 2.2 of cite{HRS}.
\begin{thm}[Steinberger] 
In the dual Steenrod algebra, $\HH_*(\HF_2;\F_2) \iso \F_2 [\xi_1, \xi_2, \xi_3, \ldots]$, we have that $Q^{2^i-2}(\xi_1)=\oxi_i$.
The dual Steenrod algebra, $\HH_*(\HF_2;\F_2)$ is generated as an algebra over the Dyer-Lashof algebra by $\xi_1$.
Further, we have that $Q^{2^i}(\oxi_i)=\oxi_{i+1}$ for $i \geq 1$.
\end{thm}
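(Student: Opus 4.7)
The plan is to prove Steinberger's formulas by combining the Nishida relations between Steenrod and Dyer--Lashof operations, Milnor's description of the Steenrod coaction on the dual Steenrod algebra, and induction on $i$. Since $\HF_2$ is an $\Hi$ ring spectrum, $\HH_*(\HF_2;\F_2) = \pi_*(\HF_2 \wedge \HF_2)$ carries a Dyer--Lashof action satisfying the basic unstable axioms $Q^s(x)=0$ for $s<|x|$ and $Q^{|x|}(x)=x^2$. Applied to $\xi_1 \in \HH_1$ this gives $Q^1(\xi_1)=\xi_1^2=\oxi_1^2$, anchoring the induction. The case $i=1$ of the first formula, $Q^{2^1-2}(\xi_1)=Q^0(\xi_1)=\xi_1=\oxi_1$, is tautological, and iterating $Q^{2^i}$ will then produce each subsequent $\oxi_i$.

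For $Q^{2^i-2}(\xi_1)=\oxi_i$, induct on $i$. The class $Q^{2^i-2}(\xi_1)$ lives in degree $2^i-1=|\oxi_i|$, so the identification is dimensionally consistent. To pin it down, apply the Nishida relations
\[
Sq_*^r\, Q^s = \sum_{k} \binom{s-r}{r-2k}\, Q^{s-r+k}\, Sq_*^k
\]
and compute $Sq_*^r Q^{2^i-2}(\xi_1)$ using Milnor's formulas for the Steenrod coaction on the $\xi_j$ together with the inductive hypothesis to evaluate the resulting $Q^{\star}$ terms on lower-degree generators. Comparing the output with $Sq_*^r(\oxi_i)$ --- determined by the conjugate formula given above --- term by term yields equality, since a class of degree $2^i-1$ in $\HH_*(\HF_2;\F_2)$ is distinguished by its image under all $Sq_*^r$ once its leading contribution has been fixed.

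The iteration $Q^{2^i}(\oxi_i)=\oxi_{i+1}$ is handled identically: because $|\oxi_i|=2^i-1$, the operation $Q^{2^i-1}(\oxi_i)=\oxi_i^2$ is the Frobenius, so $Q^{2^i}$ is the first genuinely new operation on $\oxi_i$ and it lands in degree $2^{i+1}-1$. The Nishida relations and Milnor's coproducts again give $Sq_*^r Q^{2^i}(\oxi_i)=Sq_*^r(\oxi_{i+1})$ for every $r$, forcing equality. The generation statement is then immediate: iterating the second formula produces every $\oxi_i$ starting from $\oxi_1=\xi_1$, and since the $\oxi_i$ form a polynomial basis of $\HH_*(\HF_2;\F_2)$, they Dyer--Lashof-generate the whole algebra from the single class $\xi_1$.

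The main obstacle will be the combinatorial bookkeeping showing that, at the specific indices $s=2^i-2$ and $s=2^i$, the mod-$2$ binomial coefficients in the Nishida sum and the binomials in Milnor's coproduct conspire to leave exactly the coaction on $\oxi_i$ and $\oxi_{i+1}$ respectively. The appearance of the conjugates $\oxi_i$ rather than the $\xi_i$ is the most subtle point and is where the $\Sigma_2$-action in the extended power construction on $\HF_2 \wedge \HF_2$ asserts itself: the factor swap interchanges the left and right unit maps of $\HF_2 \wedge \HF_2$, which on $\pi_*$ is implemented by the antipode $\chi_*$, and this is what forces conjugation into the answer.
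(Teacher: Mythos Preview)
The paper does not prove this theorem at all: it is quoted as Steinberger's result with a citation to Chapter~III, Theorem~2.2 of \cite{HRS}, and is used purely as input for the computations in Section~\ref{sec:comps}. So there is no ``paper's own proof'' to compare against.

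That said, your outline is the same strategy Steinberger himself uses in \cite{HRS}: Nishida relations plus induction, with the Milnor coproduct supplying the $Sq_*^r$-action on the conjugate generators. A few cautions if you intend to carry this through. First, what you have written is a plan, not a proof; the sentence ``the main obstacle will be the combinatorial bookkeeping'' is an accurate self-assessment, and that bookkeeping is the entire content of the argument. Second, the assertion that ``a class of degree $2^i-1$ is distinguished by its image under all $Sq_*^r$ once its leading contribution has been fixed'' is not obvious and is not true without qualification: already in degree~$3$ one has $Sq_*^1(\xi_2)=Sq_*^1(\xi_1^3)=\xi_1^2$. What one actually uses is an inductive characterisation of $\oxi_i$ by a \emph{specific} operation (for instance $Sq_*^{2^{i-1}}\oxi_i=\oxi_{i-1}^2$ from the conjugate coproduct formula) together with vanishing of the others in the relevant range, and one must check that the Nishida computation matches this particular pattern. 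Third, your closing explanation for why conjugates appear---via the $\Sigma_2$-swap on $\HF_2\wedge\HF_2$---is suggestive but not the mechanism in the actual computation; the conjugates emerge from the binomial-coefficient combinatorics, and the heuristic you give does not substitute for that calculation.
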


Recall that the Dyer-Lashof algebra acts on the homotopy of any commutative $\HF_2$ algebra.
We will compute the action of $Q^i$ on elements $\oo{x} \in im(\check{\varphi}:\HH_*(\HF_2;\F_2) \to \pi_* \HF_2 \wedge_R \HF_2)$ as follows.
First, we represent both $\oo{x}$ and its lift $\xt\in\HH_*(\HF_2;\F_2)$ as elements in the $E_2$-page of their respective KSSs.
We then use Steinberger's computation to identify the action of $Q^i$ in the spectral sequence converging to $\HH_*(\HF_2;\F_2)$.
Lastly, we push forward $Q^i(\xt)$ along the map of spectral sequences induced by $\varphi$.
We can compute what the map of spectral sequences induced by $\varphi$ does as we will use the same resolution to compute the $E_2$-pages, which we will demonstrate below.

It will be important to determine which class in $\HH_*(\HF_2;\F_2)$ is being detected by given class $\alpha\in \Tor^{R_*}_1(\HF_{2*}R,\F_2)$.
In the cases of interest, the class $\alpha$ will be detecting either $\xi_{i+1}$ or $\oxi_{i+1}$ in $\HH_*(\HF_2;\F_2)$.
The relationship between the $v_i \in \pi_* BP$ and $\oxi_{i+1}, \xi_{i+1} \in \HH_*(\HF_2;\F_2)$ discussed by Ravenel in \cite{GreenBook}, see Chapter 4 section 2 starting on page 114,  implies that $\oo{v}_i$ detects either $\xi_{i+1}$ or $\oxi_{i+1}$.
For our purposes, however, it will not matter if $\oo{v}_i$ detects $\xi_{i+1}$ or $\oxi_{i+1}$.
\begin{lemma}
\label{lemma:decomposables}
The image of $\xi_i$ agrees with $\oxi_i$ modulo decomposables.
\end{lemma}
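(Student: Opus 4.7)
The plan is to read the lemma directly off of the explicit conjugation formula that Milnor's theorem supplies. Writing
\[
\oxi_i = \chi_*(\xi_i) = \sum_{\alpha \in \mathrm{Part}(i)} \prod_{n=1}^{l(\alpha)} \xi_{\alpha(n)}^{2^{\sigma(\alpha)}},
\]
I would split the sum into the contribution from the trivial (length one) ordered partition $\alpha = (i)$, and the contribution from all ordered partitions of length $l(\alpha) \geq 2$. For the trivial partition, the exponent is $2^0 = 1$ and the product is a single factor, so the term reads off as $\xi_i$. Every other summand is a product of at least two factors $\xi_j^{2^k}$ drawn from the generating set of $\HH_*(\HF_2;\F_2) \iso \F_2[\xi_1,\xi_2,\ldots]$, hence lies in the square of the augmentation ideal, i.e.\ is decomposable. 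Subtracting gives $\oxi_i - \xi_i \in (\xi_1,\xi_2,\ldots)^2$, which is exactly the claim.

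The only bookkeeping to confirm is that no partition other than the trivial one can produce a linear monomial. This is immediate on degree grounds once one fixes the convention: the factors $\xi_{\alpha(n)}$ for $n = 1,\ldots,l(\alpha)$ are all nontrivial generators (each $\alpha(n) \geq 1$), and there are at least two of them whenever $l(\alpha) \geq 2$, so the monomial has polynomial degree $\geq 2$ in the generators. No genuine obstacle arises here; the lemma is essentially a reading of the stated formula, and it is inserted precisely so that later arguments can treat $\oo{v}_i$ as detecting either $\xi_{i+1}$ or $\oxi_{i+1}$ without needing to distinguish.
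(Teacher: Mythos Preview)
Your proposal is correct and is essentially the same argument as the paper's: both isolate the unique length-one ordered partition $\alpha=(i)$ as contributing the single indecomposable term $\xi_i$, and observe that every other summand in the conjugation formula is a product of at least two generators and hence decomposable. You give slightly more detail (the exponent $2^0=1$, the explicit containment in $(\xi_1,\xi_2,\ldots)^2$), but the route is identical.
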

\begin{proof}
The only indecomposable element in the sum $\tau_*(\xi_i)=\Sigma_{\alpha \in Part(i)}\Pi_{n=1}^{l(\alpha)}\xi^{2^{\sigma(i)}}_{\alpha(n)}$ is $\xi_i$.
Here, $l(\alpha)$ is the length of the ordered partition $\alpha$ of $i$.
While the formula may be complicated, the only indecomposable elements on the right hand side occur when $l(\alpha)=1$, and this forces $\alpha$ to be the indiscrete partition of $i$ that is just $i$ itself.
\end{proof}

\subsection{Relation to algebraic operations}
\label{subsec:relation to alg}
The operations we compute here are Dyer-Lashof operations that act on the homotopy of commutative $\HF_p$-algebras.
Classically, Dyer-Lashof operations refer to operations on the homology of infinite loop spaces or $\Hi$-ring spectra.
However, it is not difficult to see that this is a special case of operations that exist on $\Hi$-$\HF_p$-algebras.

When $E=\HF_p$ and $X=\HF_p\wedge X'$, the operations are referred to as the Dyer-Lashof algebra (or Araki-Kudo algebra when $p=2$).
They are well studied and satisfy a number of desirable properties.
Our familiarity with these operations is due in large part to our understanding of $H_*(D_pS^n;\Z/p)$.
Their most important properties are summarized in the following result taken from chapter III section 1 of \cite{HRS}.
The case $p=2$ is given, while the case $p$ is odd are in parentheses.
\begin{thm}
\label{thm:DLprop}
For any prime $p$ and $s\in\Z$, there exist operations $Q^s$ in the mod $p$ homology of $\Hi$-ring spectra $X$.
They satisfy the following properties.
\begin{itemize}
\item The $Q^s$ are natural homomorphisms $Q^s: H_n(X;\Z/p) \to H_{n+k}(X;\Z/p)$ where $k=s$ ($k=2s(p-1)$).
\item For $x\in H_n(X;\Z/p)$ $Q^s(x)=0$ if $s<n$ ($2s<n$) and $Q^n(x)=x^2$ ($Q^{2n}(x)=x^p$).
\item $Q^s(1)=0$ for $s \neq 0$ where $1\in H_0 (X;\Z/p)$ is the unit.
\item $Q^s(xy)=\displaystyle\sum_{i+j=s} Q^i(x)Q^j(y)$ for $x,y\in H_*(X;\Z/p)$.
\item When $r>2s$ $Q^rQ^s=\displaystyle\sum_i f(r,s,i)Q^{r+s-i}Q^i$ where $f(r,s,i)=\binom{i-s-1}{2i-r}$ $($when $r>ps$ we have $f(r,s,i)=(-1)^{r+i}\binom{(p-1)(i-s)-1}{pi-r})$
\item When $p\neq 2$ and $r \geq ps$, we also have 
\begin{multline*}
Q^r\beta Q^s=\displaystyle\sum_i (-1)^{r+i}\binom{(p-1)(i-s)}{pi-r}\beta Q^{r+s-i}Q^i\\
-\displaystyle\sum_i (-1)^{r+i}\binom{(p-1)(i-s)-1}{pi-r-1}Q^{r+s-i}\beta Q^i
\end{multline*}
\end{itemize}  
\end{thm}
The sums in the last two bullets are not infinite as the binomial coefficients are trivial for all but finitely many $i$.
The Dyer-Lashof algebra also acts on the homotopy of any commutative $\HF_p$-algebra via the construction in \ref{dfn:power op}.

\begin{lemma}
\label{lemma:DLops}
The Dyer-Lashof operations in the homotopy of a commutative $\HF_p$-algebra are the restriction of the Dyer-Lashof operations in the homology of a commutative $\bS$-algebra.
\end{lemma}
\begin{proof}
Let $\eta: S \to \HF_p$ be the unit of the map.
The diagram
\[
\xymatrix{
S^k \ar[r]^{\alpha} \ar[d]_1&
\HF_p \wedge D_r S^n\ar[rr]^{1\wedge D_r(x)} \ar[d]&
&
\HF_p \wedge D_r X \ar[rr]^{1 \wedge \xi_r} \ar[d]&
&
\HF_p \wedge X\ar[r]^{\mu} \ar[d]&
X \ar[d]^1\\
S^k \ar[r]^{\alpha} \ar[d]_1&
\HF_p \wedge D_r S^n\ar[rr]^{1\wedge D_r(\eta_*(x))}\ar[d]&
&
\HF_p \wedge D_r (\HF_p\wedge X) \ar[rr]^{1 \wedge \xi_r} \ar[d]&
&
\HF_p \wedge \HF_p \wedge X\ar[r]^{\mu\circ\mu}\ar[d]&
X\ar[d]^1\\
S^k \ar[r]^{\alpha}&
\HF_p \wedge D_r S^n\ar[rr]^{1\wedge D_r(x)}&
&
\HF_p \wedge D_r X \ar[rr]^{1 \wedge \xi_r}&
&
\HF_p \wedge X\ar[r]^{\mu}&
X
}
\]
commutes.
Here, $\mu': \HF_p \wedge \HF_p \wedge X \to X$
The second row is a homology Dyer-Lashof operation while the top and bottom row are homotopy Dyer-Lashof operations.
The commutativity of this diagram with the identity maps at the right and leftmost columns shows that these two types of operations agree.
Specifically, the Dyer-Lashof operations in the homotopy of a commutative $\HF_p$-algebra are the restriction of the Dyer-Lashof operations in the homology of a commutative $\bS$-algebra.
As 
\[
\xymatrix{
S^k \ar[r]^{\alpha} \ar[d]_1&
\HF_p \wedge D_r S^n\ar[rr]^{1\wedge D_r(x)} \ar[d]&
&
\HF_p \wedge D_r X \ar[rr]^{1 \wedge \xi_r} \ar[d]&
&
\HF_p \wedge X\ar[r]^{\mu} \ar[d]&
X \ar[d]^{\eta}\\
S^k \ar[r]^{\alpha} \ar[d]_1&
\HF_p \wedge D_r S^n\ar[rr]^{1\wedge D_r(\eta_*(x))}\ar[d]&
&
\HF_p \wedge D_r (\HF_p\wedge X) \ar[rr]^{1 \wedge \xi_r} \ar[d]&
&
\HF_p \wedge \HF_p \wedge X\ar[r]^{\mu}\ar[d]&
\HF_p \wedge X\ar[d]^{\mu}\\
S^k \ar[r]^{\alpha}&
\HF_p \wedge D_r S^n\ar[rr]^{1\wedge D_r(x)}&
&
\HF_p \wedge D_r X \ar[rr]^{1 \wedge \xi_r}&
&
\HF_p \wedge X\ar[r]^{\mu}&
X
}
\]
is commutative, they are also related by the Hurewicz map. 
\end{proof}
Further, as the composition 
\[
X \lra \HF_p \wedge X \lra X
\]
is the identity and each map is a map of commutative $\bS$-algebras, $X$ splits off of $\HF_p \wedge X$ as a commutative $\bS$-algebras.
We also have the following identification, which follows from work of McClure in \cite{HRS} as well as a comment of May's in the same volume.
\begin{lemma}
For an $\HF_p$-module $X$, $\pi_* \PP_{\HF_p}X$ is the free allowable algebra over the Dyer-Lashof algebra generated by the graded $\F_p$-vector space $\pi_* X$.
\end{lemma}

We have stated above that we are unable to construct these operations as algebraic operations on the $E_2$-page of the spectral sequence.
What we mean by this is as follows.
The K\"unneth filtration $\Xb$ is an $\Hi$-filtration when considering commtuative $R$-algebras $A$ and $B$ over a commutative ring spectrum $R$,
Thus we have maps
\[
Q_{\alpha}:D^{(k)}_r(X_n) \lra X_{rn+k}
\]
which induce maps on the $E_1$-page and hence all other pages of the spectral sequence.
This map $Q_{\alpha}$ can, and does, factor through lower filtrations.
This factoring will make it impossible to construct all the operations that exist at some finite page of the spectral sequence. 

Let us restrict our attention to the case that $A$ is a commutative $\HF_p$ algebra so that $A\wedge_R B$ is as well.
By the uniqueness of the algebraic operations we constructed in Section \ref{subsec:algebraic operations} and Tate's result, Theorem \ref{thm: Tate}, we know that these operations vanish on the $E_2$-page.
This means that the map above factors as
\[
Q_{\alpha}:D^{(k)}_r(X_n)\lra X_{rn+k-2} \lra X_{rn+k}.
\]
This would naturally lead to operations that could potentially exist on the $E_3$-page of the spectral sequence.
We could then ask about developing such a theory.
However, there is no page at which all of the operations that we know exist homotopically could be defined algebraically.
There is no lower bound $l$ such that all operations $Q_{\alpha}$
\[
D^{(k)}_r(X_n) \sr{Q_{\alpha}}\lra X_{rn+k} \lra X_{rn+k}/X_{rn+k-l}
\]
are nonzero in homotopy.
This can be seen in the following section where eventually $Q^{2^i-2}(\oo{2})$ is detected on the zero line fo the $E_2$-page and hence $Q^{2^i-2}$ factors through the $0$th filtration.
This means that there is no $r$ such that the our homotopical operations, that exist on the level of the filtration, would be detected by an algebraic construction on the $r$th page of the spectral sequence.
Then one might guess that the operations all land in the $0$th filtration, but this is also not the case as we have that $Q^{2^1-2}(\oo{2})=\oo{x}_{2^{i-1}-1}\in \pi_* \HF_2\wedge_{MU} \HF_2$.

\subsection{$ku \lra \HF_2$}
\label{subsec:ku over F_2}
Consider the map $ku \to \HF_2$ whose effect in homotopy is reduction of $ku_*\iso \Z[v]$ modulo $(2,v)$, where $v \in \pi_2 (ku)$ is the Bott class.
This example is the simplest one with nontrivial Dyer-Lashof operations (of which we are aware).
The computations in the following sections will be very similar to the computation here.

\begin{prop}
\label{prop: ku computation}
The KSS $\Tor^{ku_*}_* (\F_2,\F_2) \ssra \pi_*\HF_2 \wedge_{ku} \HF_2$ collapses at the $E_2$ page.
$\pi_* \HF_2 \wedge_{ku} \HF_2$ is an exterior algebra $E_{\F_2}[\oo{2},\oo{v}]$ with $Q^2(\oo{2})=\oo{v}$ where $\vert \oo{2} \vert=1$ and $\vert \oo{v} \vert=3$.
\end{prop}

\begin{proof}
We give this proof in full as the later examples will follow the method closely.
To compute the relevant $\Tor$ group we use the Koszul complex $\Lambda_{ku_*}(2,v)$ associated with the regular sequence $(2,v) \subset ku_*$.
The resolution is

\[
\xymatrix{
ku_*&
&
ku_* \dsum \sus^2 ku_* \ar[ll]_-{
\left( \begin{array}{c}
2\\
v \end{array} \right)}&
&
\sus^2 ku_* \ar[ll]_-{\left( \begin{array}{cc}
v & -2 \end{array} \right)}
}
\]
which becomes
\[
\xymatrix{
\F_2&
\F_2 \dsum \sus^2 \F_2 \ar[l]_-0&
\sus^2 \F_2 \ar[l]_-0
}
\]
after applying $\F_2 \tensor_{ku_*} -$.
We see that $\Tor^{ku_*}_*(\F_2,\F_2)_*$ is an exterior algebra over $\F_2$ generated by classes we call $\oo{2}\in \Tor^{ku_*}_1(\F_2,\F_2)_0$ and $\oo{v}\in \Tor^{ku_*}_1(\F_2,\F_2)_2$.
\[
\xy
(-7,19)*{\mathrm{s}};(23,-8)*{\mathrm{t+s}};
(-3,3)*{0};(-3,9)*{1};(-3,15)*{2};(-3,21)*{3};
(-3,27)*{4};(-3,33)*{5};(3,-3)*{0};(9,-3)*{1};
(15,-3)*{2};(21,-3)*{3};(27,-3)*{4};(33,-3)*{5};
(39,-3)*{6};
(3,3)*{1};
(9,9)*{\overline{2}};
(21,9)*{\overline{v}};
(27,15)*{\overline{2}\overline{v}};
\ar@{-}(0,0);(0,40);
\ar@{-}(0,0);(60,0);
\endxy
\]
The spectral sequence is multiplicatively generated by $\oo{2}$ and $\oo{v}$ which are on the 1-line of the spectral sequence, and are therefore permanent cycles.

To compute the action of the Dyer-Lashof algebra, we use the induced map of spectral sequences.
We compare the above spectral sequence to
\[
\Tor^{ku_*}_*(\HH_*(ku;\F_2), \F_2) \ssra \pi_* (\HF_2 \wedge ku \wedge_{ku} \HF_2) \iso \HH_*(\HF_2;\F_2).
\]
By Steinberger's Theorem, we know the action of the Dyer-Lashof algebra on $\HH_*(\HF_2;\F_2)$, the target of the spectral sequence.
As $\HH_*(ku;\F_2) \iso \F_2[\oxi_1^2, \oxi_2^2, \oxi_3, \oxi_4, \ldots]$ and both $2$ and $v$ act trivially on $\HH_*(ku;\F_2)$ we can compute $\Tor$ using the same Koszul complex as above.
This complex becomes
\[
\xymatrix{
\HH_*(ku;\F_2) &
&
\HH_*(ku;\F_2) \dsum \sus^2 \HH_*(ku;\F_2) \ar[ll]_-0&
&
\sus^2 \HH_*(ku;\F_2) \ar[ll]_-0
}
\]
after applying $\HH_*(ku;\F_2) \tensor_{ku_*} -$ to $\Lambda_{ku_*}(2,v)$.
Therefore we have that 
\[
\Tor^{ku_*}_*(\HH_*(ku;\F_2),\F_2)_* \iso \HH_*(ku;\F_2) \tensor E_{\F_2}(\oo{2},\oo{v})
\]
with $\oo{2}$ and $\oo{v}$ as above.
This spectral sequence collapses for the same reasons as before.
We know that $\oo{2}$ detects $\oxi_1 \in \HH_1(\HF_2;\F_2)$ as there are no other nonzero classes in those degrees and so $\oo{2}\oxi_1^2$ detects $\oxi_1^3$.
The class $\oo{v}$ detects either $\xi_2$ or $\oxi_2=\xi_2+\xi_1^3$ in $\HH_3(\HF_2;\F_2)$.
It will not matter which, as they agree modulo $\ker(\check{\varphi})$ by \ref{lemma:decomposables}.
See the computation of $\HH_*(ku;\F_2)$ in Chapter 3 of \cite{GreenBook} for the relationship between $v$ and $\oxi_2$ , specifically Theorem 3.1.16.
From Steinberger's computation, we know that there is an operation $Q^2(\oxi_1)=\oxi_2$, this lifts to $Q^2(\oo{2})=\oo{v}+\epsilon\oo{2}\xi_1^2$ where $\epsilon\in\{0,1\}$ in the $E_2$ page of the spectral sequence.

The map of spectral sequences induced by $\varphi: ku \to \HF_2$ is obtained from $\check{\varphi}: \HF_2 \wedge ku \to \HF_2$ which induces quotient map $\HH_*(ku;\F_2) \to \F_2$.
We then obtain the map of Koszul complexes
\[
\xymatrix{
\HH_*(ku;\F_2) \ar[dd] &
&
\HH_*(ku;\F_2) \dsum \sus^2 \HH_*(ku;\F_2) \ar[ll] \ar[dd]&
&
\sus^2 \HH_*(ku;\F_2) \ar[ll] \ar[dd]\\
&
&
&
&
\\
\F_2&
&
\F_2 \dsum \sus^2 \F_2 \ar[ll]&
&
\sus^2 \F_2 \ar[ll]
}
\]
This induces the map of $E_2$-pages $\HH_*(ku;\F_2) \tensor E_{\F_2}(\oo{2},\oo{v}) \to \F_2 \tensor E_{\F_2}(\oo{2},\oo{v})$.
This map preserves the action of the Dyer-Lashof algebra since the construction of the $\Hi$-structure of the filtration in the KSS is natural.
Regardless of the value of $\epsilon$, $\check{\varphi}(\oo{2}\xi_1^2)=0$.
We then have that $\pi_* \HF_2 \wedge_{ku} \HF_2 \iso E_{\F_2}(\oo{2},\oo{v})$ with $Q^2(\oo{2})=\oo{v}$ as claimed.
\[
\xy
(-7,19)*{\mathrm{s}};(23,-8)*{\mathrm{t+s}};
(-3,3)*{0};(-3,9)*{1};(-3,15)*{2};(-3,21)*{3};
(-3,27)*{4};(-3,33)*{5};(3,-3)*{0};(9,-3)*{1};
(15,-3)*{2};(21,-3)*{3};(27,-3)*{4};(33,-3)*{5};
(39,-3)*{6};
(3,3)*{1};
(9,9)*{\overline{2}};
(21,9)*{\overline{v}};
(27,15)*{\overline{2}\overline{v}};
\ar@{-}(0,0);(0,40);
\ar@{-}(0,0);(60,0);
\ar@{<-}_{Q^2}(11,9);(19,9);
\endxy
\]
\end{proof}

\subsection{$\BP \to \HF_2$}
It was recently shown by Lawson and Naumann in \cite{LawsonNaumann} that the $S$-module $\BP$ can be modeled as a commutative $S$-algebra.
Given this, it is easy to construct a map of commutative $S$-algebras $\BP \to \HF_2$ whose effect in homotopy is reduction of $\BP_*\iso \Z_{(2)}[v_1,v_2]$ modulo $(2,v_1,v_2)$, where $v_1 \in \pi_2 \BP$ and $v_2 \in \pi_6 \BP$.
Using this map, we proceed as in the previous section.
\begin{prop}
The KSS $\Tor^{\BP_*}_* (\F_2,\F_2) \ssra \pi_*\HF_2 \wedge_{\BP} \HF_2$ collapses at the $E_2$ page.
$\pi_* \HF_2 \wedge_{\BP} \HF_2$ is an exterior algebra $E_{\F_2}[\oo{2},\oo{v}_1,\oo{v}_2]$ with $Q^2(\oo{2})=\oo{v}_1$, $Q^6(\oo{2})=\oo{v}_2$, $Q^4(\oo{v}_1)=\oo{v}_2$, and $Q^6(\oo{2}\oo{v}_1)=\oo{v}_1\oo{v}_2$ where $\vert \oo{2} \vert=1$, $\vert \oo{v}_1 \vert=3$ and $\vert \oo{v}_2 \vert=7$.
\end{prop}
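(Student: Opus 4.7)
The proof mirrors the $ku$ computation of Subsection~\ref{subsec:ku over F_2} closely. First, I would compute the $E_2$-page of the KSS
\[
\Tor^{\BP_*}_*(\F_2, \F_2)_* \ssra \pi_*(\HF_2 \wedge_{\BP} \HF_2)
\]
using the Koszul resolution $\Lambda_{\BP_*}(2, v_1, v_2)$ for the regular sequence $(2, v_1, v_2) \subset \BP_* = \Z_{(2)}[v_1, v_2]$. After applying $\F_2 \tensor_{\BP_*} -$ all differentials vanish, yielding $E_2 \iso E_{\F_2}(\oo{2}, \oo{v}_1, \oo{v}_2)$ with generators on the 1-line in internal degrees $0$, $2$, and $6$ (total degrees $1$, $3$, $7$).

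Collapse at $E_2$ is then immediate: by Theorem~\ref{thm: mult of KSS} the spectral sequence is multiplicative, and the 1-line generators $\oo{2}, \oo{v}_1, \oo{v}_2$ are permanent cycles for degree reasons, hence so is the whole $E_2$-page. Thus $\pi_*(\HF_2 \wedge_{\BP} \HF_2) \iso E_{\F_2}(\oo{2}, \oo{v}_1, \oo{v}_2)$ as an algebra.

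For the Dyer-Lashof operations I would compare with the KSS converging to $\HH_*(\HF_2; \F_2)$ via the map $\check{\varphi}$ induced by $\varphi: \BP \to \HF_2$. Since $(2, v_1, v_2)$ acts trivially on $\HH_*(\BP; \F_2)$, just as in the $ku$ case, the same Koszul complex computes the upper $E_2$-page as $\HH_*(\BP; \F_2) \tensor E_{\F_2}(\oo{2}, \oo{v}_1, \oo{v}_2)$, which again collapses. The classes $\oo{2}, \oo{v}_1, \oo{v}_2$ now detect $\xi_1$, $\oxi_2$, and $\oxi_3$ in $\HH_*(\HF_2; \F_2)$, the latter two only modulo decomposables by Lemma~\ref{lemma:decomposables}. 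Steinberger's Theorem supplies $Q^2(\xi_1) = \oxi_2$, $Q^6(\xi_1) = \oxi_3$, and $Q^4(\oxi_2) = \oxi_3$. Pushing forward via $\check{\varphi}$, which preserves Dyer-Lashof operations by naturality of the $\Hi$-structure on the filtration (Corollary~\ref{cor:Hinfty structure on filtration}) and which annihilates all positive-degree elements of $\HH_*(\BP; \F_2)$, produces $Q^2(\oo{2}) = \oo{v}_1$, $Q^6(\oo{2}) = \oo{v}_2$, and $Q^4(\oo{v}_1) = \oo{v}_2$; the decomposable correction terms vanish under $\check{\varphi}$.

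The final operation $Q^6(\oo{2}\oo{v}_1) = \oo{v}_1 \oo{v}_2$ then follows purely formally from the Cartan formula
\[
Q^6(\oo{2}\oo{v}_1) = \sum_{i+j=6} Q^i(\oo{2}) Q^j(\oo{v}_1),
\]
together with the vanishing $Q^i(x) = 0$ for $i < |x|$ and $Q^{|x|}(x) = x^2 = 0$ in the exterior algebra: the only surviving summand is $Q^2(\oo{2}) \cdot Q^4(\oo{v}_1) = \oo{v}_1 \cdot \oo{v}_2$. The main potential obstacle is ensuring that the identifications of $\oo{v}_1, \oo{v}_2$ with $\oxi_2, \oxi_3$ behave correctly after pushing forward through $\check{\varphi}$; but the decomposable ambiguity present in $\HH_*(\HF_2; \F_2)$ is precisely what $\check{\varphi}$ kills, so there is no actual issue.
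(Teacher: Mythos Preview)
Your proposal is correct and follows the paper's proof essentially step for step: Koszul complex for $(2,v_1,v_2)$, collapse by multiplicativity on the 1-line, comparison via $\check{\varphi}$ with the KSS for $\HH_*(\HF_2;\F_2)$ using Steinberger's operations, and the Cartan formula for the final operation. The only cosmetic difference is that the paper writes $\oxi_1$ where you write $\xi_1$ (they coincide) and leaves the Cartan computation implicit, whereas you spell it out.
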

\begin{proof}
To compute the relevant $\Tor$ group, we use the Koszul complex associated with the regular sequence $(2,v_1,v_2) \subset \BP_*$.
The resulting complex that computes $\Tor^{\BP_*}_* (\F_2,\F_2)$ is
\[
\F_2 \stackrel{0}{\longleftarrow}
\F_2 \dsum \sus^2 \F_2 \dsum \sus^6 \F_2 
\stackrel{0}{\longleftarrow}
\sus^2 \F_2 \dsum \sus^6 \F_2 \dsum \sus^8 \F_2 
\stackrel{0}{\longleftarrow}
\sus^8 \F_2
\]
We see that $\Tor^{\BP_*}_*(\F_2,\F_2)_*$ is an exterior algebra over $\F_2$ generated by classes $\oo{2}\in \Tor^{\BP_*}_1(\F_2,\F_2)_0$, $\oo{v}_1\in \Tor^{\BP_*}_1(\F_2,\F_2)_2$ and $\oo{v}_2\in \Tor^{\BP_*}_1(\F_2,\F_2)_6$.
\[
\xy
(-7,19)*{\mathrm{s}};(23,-8)*{\mathrm{t+s}};
(-3,3)*{0};(-3,9)*{1};(-3,17)*{2};(-3,25)*{3};
(3,-3)*{0};(9,-3)*{1};
(15,-3)*{2};
(21,-3)*{3};(27,-3)*{4};(33,-3)*{5};
(39,-3)*{6};
(45,-3)*{7};(51,-3)*{8};(57,-3)*{9};
(63,-3)*{10};(69,-3)*{11};
(3,3)*{1};
(9,9)*{\overline{2}};
(21,9)*{\overline{v}_1};
(45,9)*{\oo{v}_2};
(27,17)*{\oo{2}\oo{v}_1};
(51,17)*{\oo{2}\oo{v}_2};
(63,17)*{\oo{v}_1\oo{v}_2};
(69,25)*{\oo{2}\oo{v}_1\oo{v}_2};
\ar@{-}(0,0);(0,40);
\ar@{-}(0,0);(75,0);
\endxy
\]
For exactly the same reasons as in the case of $ku$, this spectral sequence collapses at the $E_2$-page.

We compute the action of the Dyer-Lashof algebra just as in the case of $ku$, by computing the induced map of spectral sequences.
Consider KSS 
\[
\Tor^{\BP_*}_*(\HH_*(\BP;\F_2), \F_2) \ssra \pi_* (\HF_2 \wedge \BP \wedge_{\BP} \HF_2) \iso \HH_*(\HF_2;\F_2).
\]
By Steinberger's Theorem, we know the action of the Dyer-Lashof algebra on $\HH_*(\HF_2;\F_2)$, the target of the spectral sequence.
As $\HH_*(\BP;\F_2) \iso \F_2[\oxi_1^2, \oxi_2^2, \oxi_3^2, \oxi_4, \oxi_5, \ldots]$ with $2$, $v_1$, and $v_2$ all acting trivially, we compute $\Tor$ using same Koszul complex as above.
After applying $\HH_*(\BP;\F_2) \tensor_{\BP_*} -$, it becomes
\[
\HH_*(\BP;\F_2) \stackrel{0}{\longleftarrow}
\begin{array}{c}
\HH_*(\BP;\F_2)_*\\
\dsum\\
\sus^2 \HH_*(\BP;\F_2)_*\\
\dsum\\
\sus^6 \HH_*(\BP;\F_2) 
\end{array} 
\stackrel{0}{\longleftarrow}
\begin{array}{c}
\sus^2 \HH_*(\BP;\F_2)\\
\dsum\\
\sus^6 \HH_*(\BP;\F_2)\\
\dsum\\
\sus^8 \HH_*(\BP;\F_2) \end{array} 
\stackrel{0}{\longleftarrow}
\sus^8 \HH_*(\BP;\F_2). 
\]
We see that 
\[
\Tor^{\BP_*}_*(\HH_*(\BP;\F_2),\F_2)_* \iso \HH_*(\BP;\F_2) \tensor E_{\F_2}(\oo{2},\oo{v}_1,\oo{v}_2)
\]
with degrees as described above.
This spectral sequence also collapses.
As before, $\oo{2}$ detects $\oxi_1 \in \HH_1(\HF_2;\F_2)$, and so $\oo{2}\oxi_1^2$ detects $\oxi_1^3$.
We also have that $\oo{v}_1$ detects either $\xi_2$ or $\oxi_2 =\oxi_2+\oxi_1^3$ in $\HH_3(\HF_2;\F_2)$.
Similarly, $\oo{v}_2$ detects either $\xi_3$ or $\oxi_3$ in $\HH_7(\HF_2;\F_2)$.
Details relating $v_i$ and $\xi_{i+1}$ can be found in Ravenel's \cite{GreenBook} where Ravenel discusses the computation of $\HH^*(k(n);\F_2)$, see Chapter 4 section 2 starting on page 114.
Also see the computation in section 5.2 page 1246 of \cite{AngeltveitRognes}.
From Steinberger's computation we know that $Q^2(\oxi_1)=\oxi_2$, $Q^6(\oxi_1)=\oxi_3$, and $Q^4(\oxi_2)=\oxi_3$ 

The map of spectral sequences induced by $\varphi$ is obtained from $\HF_2 \wedge \BP \to \HF_2$ which induces the reduction map $\HH_*(\BP;\F_2) \to \F_2$.
This induces the map of complexes
\[
\xymatrix{
\HH_*(\BP;\F_2) \ar[dd]&
{\begin{array}{c}
\HH_*(\BP;\F_2)_*\\
\dsum\\
\sus^2 \HH_*(\BP;\F_2)_*\\
\dsum\\
\sus^6 \HH_*(\BP;\F_2) 
\end{array} 
} \ar[l] \ar[dd]&
{\begin{array}{c}
\sus^2 \HH_*(\BP;\F_2)_*\\
\dsum\\
\sus^6 \HH_*(\BP;\F_2)_*\\
\dsum\\
\sus^8 \HH_*(\BP;\F_2) 
\end{array} 
} \ar[l] \ar[dd]&
\sus^8 \HH_*(\BP;\F_2) \ar[l] \ar[dd]\\
&
&
&
\\
\F_2&
\F_2 \dsum \sus^2 \F_2 \dsum \sus^6 \F_2 \ar[l]&
\sus^2 \F_2 \dsum \sus^6 \F_2 \dsum \sus^8 \F_2 \ar[l]&
\sus^8 \F_2. \ar[l]
}
\]
This map of complexes induces the map of $E_2$-pages $\HH_*(\BP;\F_2) \tensor E_{\F_2}(\oo{2},\oo{v}_1, \oo{v}_2) \to \F_2 \tensor E_{\F_2}(\oo{2},\oo{v}_1, \oo{v}_2)$.

\[
\xy
(-7,19)*{\mathrm{s}};(23,-8)*{\mathrm{t+s}};
(-3,3)*{0};(-3,9)*{1};(-3,17)*{2};(-3,25)*{3};
(3,-3)*{0};(9,-3)*{1};
(15,-3)*{2};
(21,-3)*{3};(27,-3)*{4};(33,-3)*{5};
(39,-3)*{6};
(45,-3)*{7};(51,-3)*{8};(57,-3)*{9};
(63,-3)*{10};(69,-3)*{11};
(3,3)*{1};
(9,9)*{\oo{2}};
(21,9)*{\oo{v}_1};
(45,9)*{\oo{v}_2};
(27,17)*{\oo{2}\oo{v}_1};
(51,17)*{\oo{2}\oo{v}_2};
(63,17)*{\oo{v}_1\oo{v}_2};
(69,25)*{\oo{2}\oo{v}_1\oo{v}_2};
\ar@{-}(0,0);(0,40);
\ar@{-}(0,0);(75,0);
\ar@{<-}_{Q^2}(11,9);(19,9);
\ar@{<-}_{Q^4}(23,9);(42,9);
\ar@/^1pc/@{<-}^{Q^6}(11,9);(42,8);
\ar@/^1pc/@{->}^{Q^6}(29,18);(59,18)
\endxy
\]

This map preserves the action of the Dyer-Lashof algebra and so we have that $\pi_* \HF_2 \wedge_{\BP} \HF_2 \iso E_{\F_2}(\oo{2},\oo{v}_1,\oo{v}_2)$ with $Q^2(\oo{2})=\oo{v}_1$, $Q^6(\oo{2})=\oo{v}_2$, and $Q^4(\oo{v}_1)=\oo{v}_2$ as desired.
The operation $Q^6(\oo{2}\oo{v}_1)=\oo{v}_1\oo{v}_2$ follows from the Cartan formula.
\end{proof}

\subsection{$MU \to \HF_2$}
In this section we compute some of the action of the Dyer-Lashof algebra on $\pi_*\HF_2\wedge_{MU} \HF_2$.
As before, we will need to know the Hurewicz image for $MU$; for this see section 2 of \cite{BakerRichterAdams} or Chapter 3 of \cite{GreenBook}.
Recall that $\pi_*MU\iso \Z[x_1,x_2,...]$ where $x_i \in \pi_{2i} MU$ and $H_* (MU; \Z) \iso \Z[b_1, b_2, ...]$ where $b_i \in H_{2i} (MU;\Z)$.
Our choice of generators for $\pi_* MU$ is such that $\rho(x_{2^i-1})=v_i$, where 
\[
 \rho: MU \to MU_{(2)}
\]
is the localization map and we think of the $v_i$ as elements of $\pi_*MU_{(2)}$ via a preferred retraction of $MU_{(2)}$ onto $BP$.
This choice of generators for $MU_*$ will determine our choice of generators for $\HH_*(MU;\F_2)$, but this choice can be made so that our Hurewicz map behaves as desired; see \cite{BakerRichterAdams} section 2. 
The Hurewicz map $h_{H\Z}: \pi_* MU \to H_*(MU; \Z)$ modulo decomposables is given by
\[
h(x_i) = \left\{
\begin{array}{ll}
pb_i & i=p^k-1 \\
b_i  & else. \\
\end{array}
\right.
\]
When the Hurewicz map 
\[
h_E: \pi_*MU \to E_* MU
\] is of this form, it implies that 
\[
\Tor^{MU_*}(E_*MU,E_*) \iso \Tor^{BP_*}(E_*BP,E_*).
\]
This is the case for $\HF_2$ and other spectra associated with complex $K$-theory; see Proposition 3.1 of \cite{BakerRichterAdams}.
All that is left is to compute the map $\Tor^{MU_*}(\HH_*(MU;\F_2),\F_2) \to \Tor^{MU_*}(\F_2,\F_2)$.

\begin{prop}
The KSS 
\[
\Tor^{MU_*}_* (\F_2,\F_2) \ssra \pi_*\HF_2 \wedge_{MU} \HF_2
\]
collapses at the $E_2$ page.
$\pi_* \HF_2 \wedge_{MU} \HF_2$ is an exterior algebra $E_{\F_2}[\oo{2},\oo{x}_1,\oo{x}_2, \ldots]$ with $Q^{2^i-2}(\oo{2})=\oo{x}_{2^{i-1}-1}$ and $Q^{2^i}(\oo{x}_{2^{i-1}-1})=\oo{x}_{2^i-1}$ where $\vert \oo{2} \vert=1$ and $\vert \oo{x}_n \vert=2n+1$.
\end{prop}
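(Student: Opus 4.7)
The plan is to follow the template established in the $ku$ and $\BP$ computations, using the infinite Koszul complex for the regular sequence $(2,x_1,x_2,\ldots)$ in $MU_*$ and importing the identification of the relevant homology classes from the $\BP$-case via the Hurewicz formula recalled in the paper.

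First I would compute $\Tor^{MU_*}_*(\F_2,\F_2)$. Since $MU_*\iso\Z[x_1,x_2,\ldots]$ and $(2,x_1,x_2,\ldots)$ is a regular sequence whose quotient is $\F_2$, the Koszul complex $\Lambda_{MU_*}(2,x_1,x_2,\ldots)$ is a free $MU_*$-resolution of $\F_2$. Applying $\F_2\tensor_{MU_*}-$ kills all differentials, so
\[
\Tor^{MU_*}_*(\F_2,\F_2) \iso E_{\F_2}[\oo{2},\oo{x}_1,\oo{x}_2,\ldots]
\]
with $\oo{2}$ in bidegree $(1,0)$ and each $\oo{x}_n$ in bidegree $(1,2n)$. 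Collapse at $E_2$ is then automatic: by Theorem \ref{thm: mult of KSS} the spectral sequence is multiplicative, every algebra generator lies on the 1-line and is therefore a permanent cycle, and multiplicativity extends this conclusion to the whole $E_2$-page.

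Next I would set up the comparison spectral sequence
\[
\Tor^{MU_*}_*(\HH_*(MU;\F_2),\F_2) \ssra \HH_*(\HF_2;\F_2)
\]
induced by $\varphi:MU\to \HF_2$. Because $2$ and every $x_n$ act trivially on $\HH_*(MU;\F_2)\iso\F_2[b_1,b_2,\ldots]$, the same Koszul complex (with the new coefficients) computes its $E_2$-page, and the upper spectral sequence collapses for the same reason. Using $\rho(x_{2^{i-1}-1})=v_{i-1}$ together with the identification of $v_{i-1}$ with $\oxi_i$ modulo decomposables established in the $\BP$-section and Lemma \ref{lemma:decomposables}, the class $\oo{x}_{2^{i-1}-1}$ in the upper spectral sequence detects $\oxi_i$ modulo decomposables, while $\oo{2}$ detects $\oxi_1$. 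Naturality of the $\Hi$-filtration (Corollary \ref{cor:Hinfty structure on filtration}) ensures that $\check{\varphi}$ is a map of algebras over the Dyer-Lashof algebra.

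The last step is to import Steinberger's relations $Q^{2^i-2}(\oxi_1)=\oxi_i$ and $Q^{2^i}(\oxi_i)=\oxi_{i+1}$, lift them to the upper spectral sequence, and push forward along $\check{\varphi}$. This yields $Q^{2^i-2}(\oo{2})=\oo{x}_{2^{i-1}-1}$ and $Q^{2^i}(\oo{x}_{2^{i-1}-1})=\oo{x}_{2^i-1}$, \emph{provided} every decomposable correction term lies in the kernel of $\check{\varphi}$, in exact analogy with the disappearance of the $\epsilon\oo{2}\xi_1^2$ correction in the $ku$ calculation. The main obstacle is precisely this final verification. Any lift of Steinberger's operations to the upper spectral sequence may acquire decomposable correction terms that are polynomial in the $b_n$ and the $\oo{x}_n$. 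One must argue uniformly in $i$ that corrections with nontrivial $b_n$-content are annihilated by the map $\HH_*(MU;\F_2)\to\F_2$, while the remaining purely $\oo{x}_n$-correction terms are controlled by the bidegree of the operation and do not contribute to the stated formulas. Because $MU_*$ has infinitely many generators rather than finitely many as in $ku$ and $\BP$, this bookkeeping is the genuinely new ingredient, even though the underlying strategy is the same.
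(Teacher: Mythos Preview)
Your overall strategy matches the paper's, but there is a genuine error in the computation of the comparison spectral sequence. You assert that ``$2$ and every $x_n$ act trivially on $\HH_*(MU;\F_2)\iso\F_2[b_1,b_2,\ldots]$.'' This is false. The Hurewicz formula recalled just before the Proposition says that, modulo decomposables, $h(x_i)=-b_i$ when $i\neq 2^k-1$ and $h(x_i)=-2b_i$ when $i=2^k-1$. At the prime~$2$ only the latter vanish, so the $x_i$ with $i\neq 2^k-1$ act \emph{nontrivially} on $\HH_*(MU;\F_2)$. In particular the Koszul differentials in $\HH_*(MU;\F_2)\tensor_{MU_*}\Lambda$ are not all zero, and the upper $E_2$-page is not $\HH_*(MU;\F_2)\tensor E_{\F_2}[\oo{2},\oo{x}_1,\oo{x}_2,\ldots]$. (A sanity check: that tensor product is far too large to converge to the dual Steenrod algebra, so the upper spectral sequence could not possibly collapse as you claim.)

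The paper uses exactly this nontriviality. The nonzero Koszul differentials coming from the $x_i$ with $i\neq 2^k-1$ pair each such $\oo{x}_i$ with a $b_i$ and kill both, leaving precisely
\[
\Tor^{MU_*}_*(\HH_*(MU;\F_2),\F_2)\iso\Tor^{BP_*}_*(\HH_*(BP;\F_2),\F_2)\iso \HH_*(BP;\F_2)\tensor E_{\F_2}[\oo{2},\oo{x}_1,\oo{x}_3,\ldots,\oo{x}_{2^i-1},\ldots],
\]
the isomorphism quoted from \cite{BakerRichterAdams}. Once you have this reduction, the only surviving exterior generators upstairs are the $\oo{x}_{2^i-1}$, each detecting $\oxi_{i+1}$ (or $\xi_{i+1}$) just as in the $BP$ case, and the map $\check\varphi$ on $E_2$-pages kills $\HH_*(BP;\F_2)$ in positive degrees. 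This is what disposes of the ``bookkeeping'' problem you flag at the end: the decomposable corrections to Steinberger's operations either involve positive-degree classes in $\HH_*(BP;\F_2)$ (killed by $\check\varphi$) or lie in the span of the $\oo{x}_{2^j-1}$, where degree considerations leave no room for ambiguity. The infinite-generator issue is handled by the reduction to $BP$, not by a separate uniform argument.
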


\begin{proof}
We first compute $\Tor^{MU_*}_*(\F_2,\F_2)_*$ by realizing $\F_2$ as the regular quotient of $MU_*$ by the ideal $(2, x_1, x_2, \ldots)$.
Let us denote this Koszul complex  by $\Lambda:=\Lambda_{MU_*}(\oo{2},\oo{x}_1,\oo{x}_2,\ldots)$.
Since the sequence $(2, x_1, x_2, \ldots)$ is regular, we have that $\HH_*(\Lambda)=\HH_0(\Lambda)=\F_2$, and so
\[
\Tor^{MU_*}_*(\F_2,\F_2):=\HH_*(\F_2\tensor_{MU_*} \Lambda,d).
\]
The $MU_*$-module structure on $\F_2$ is the trivial one, so we have that $d(\oo{x}_i)$ acts trivially on $\F_2$.
This implies that 
\[
\Tor^{MU_*}_*(\F_2,\F_2)=E_{\F_2}[\oo{2},\oo{x}_1,\oo{x}_2, \ldots].
\]
As before, the $E_2$-page of this spectral sequence is multiplicatively generated on the 1-line.
Therefore the spectral sequence collapses, and we have that 
\[
\pi_* \HF_2 \wedge_{MU} \HF_2 \iso E_{\F_2}[\oo{2},\oo{x}_1,\oo{x}_2, \ldots].
\]

Next, we compare this spectral sequence with 
\[
\Tor^{MU_*}_*(\HH_*(MU;\F_2),\F_2)_* \ssra \HH_*(\HF_2;\F_2)
\]
in order to compute the action of the Dyer-Lashof algebra.
The Hurewicz map determines the $MU_*$-module structure of $\HH_*(MU;\F_2)$, which is nontrivial.
In using the complex $\Lambda$, we obtain the sequence of isomorphisms 
\[
\Tor^{MU_*}_* (\HH_*(MU;\F_2),\F_2)_* \iso \HH_*(\HH_*(MU;\F_2) \tensor_{MU_*} \Lambda ,d)\iso \Tor^{BP_*}_*(\HH_*(BP;\F_2),\F_2).
\]
This follows as the only cycles come from the classes in $MU_*$ that survive the map $MU \to BP$.
Again, we see that the spectral sequence collapses at the $E_2$-page.

To compute the action of the Dyer-Lashof algebra, we proceed as above, utilizing the fact that the element
\[
\oo{x}_{2^i-1}=\oo{v}_i\in\Tor^{BP_*}_1(\HH_*(BP;\F_2),\F_2)\iso (\F_2[\xi_1^2,\xi_2^2,\xi_3^2,\ldots]\tensor E_{\F_2}[\oo{2},\oo{x}_1,\oo{x}_3,\ldots,\oo{x}_{2^i-1},\ldots])
\]
detects $\oxi_{i+1}$ or $\xi_{i+1}$ in $\HH_{2^{i+1}-1}(\HF_2; \F_2)$.
On the $E_2$-page, this map is 
\[
\HH_*(BP;\F_2) \tensor E_{\F_2}[\oo{2},\oo{x}_1,\oo{x}_3,\ldots,\oo{x}_{2^i-1},\ldots] \to \F_2 \tensor E_{\F_2}[\oo{2},\oo{x}_1,\oo{x}_2,\oo{x}_3,\ldots].
\]
This map preserves the action of the Dyer-Lashof algebra, and this gives us that $Q^{2^i-2}(\oo{2})=\oo{x}_{2^{i-1}-1}$ and $Q^{2^i}(\oo{x}_{2^{i-1}-1})=\oo{x}_{2^i-1}$ as desired.
\end{proof}

This computation has the following corollary.

\begin{cor}
\label{cor:strickland analog}
Let $I$ be an ideal of $MU_*$ generated by a regular sequence. 
If $I$ contains a finite nonzero number of the $x_{2^i-1}$, then the quotient map $MU \to MU/I$ cannot be realized as a map of commutative $S$-algebras.
\end{cor}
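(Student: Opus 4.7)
The plan is to argue by contradiction using the naturality of the Dyer-Lashof operations in the multiplicative K\"unneth spectral sequence, which carries an $\Hi$-filtration by Theorem \ref{thm: mult of KSS} whenever both smash factors are commutative $R$-algebras, together with the explicit computation from the preceding proposition.

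First I would fix the critical index. Setting $S=\{i\ge 1 : x_{2^i-1}\in I\}$, the hypothesis says $S$ is finite and nonempty, so there is a maximal $k\in S$: that is, $x_{2^k-1}\in I$ but $x_{2^{k+1}-1}\notin I$. This is the only place the finiteness of $S$, as opposed to its nonemptiness, is used.

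Suppose for contradiction that $MU\to MU/I$ is realized as a map of commutative $S$-algebras. Then $MU/I$ becomes a commutative $MU$-algebra and $\HF_2$ inherits a commutative $MU/I$-algebra structure through the induced factorization $MU\to MU/I\to\HF_2$. Both $\HF_2\wedge_{MU}\HF_2$ and $\HF_2\wedge_{MU/I}\HF_2$ then possess multiplicative KSSs with $\Hi$-filtrations, and the change-of-base map produces a comparison
\[
\widetilde{\psi}:\HF_2\wedge_{MU}\HF_2\lra \HF_2\wedge_{MU/I}\HF_2
\]
of commutative $S$-algebras. By the naturality clause of Corollary \ref{cor:Hinfty structure on filtration}, $\widetilde{\psi}_*$ commutes with all Dyer-Lashof operations.

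Next I would compute $\widetilde{\psi}_*$ on the relevant classes. Since $I$ is generated by a regular sequence, $MU_*/I$ is itself a polynomial ring on the remaining $x_j$, and a Koszul computation identical in form to the one in the proof of the preceding proposition identifies the target KSS. Reading off the map of $E_2$-pages induced by compatible Koszul resolutions, one sees that $\widetilde{\psi}_*(\oo{x}_{2^k-1})=0$ (since $x_{2^k-1}$ has become zero in $MU_*/I$ and so disappears from the resolution of $\F_2$ over $MU_*/I$) while $\widetilde{\psi}_*(\oo{x}_{2^{k+1}-1})\ne 0$ (since $x_{2^{k+1}-1}$ survives as a nonzero regular element of $MU_*/I$ contributing to $\Tor_1$). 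Applying $\widetilde{\psi}_*$ to the relation $Q^{2^{k+1}}(\oo{x}_{2^k-1})=\oo{x}_{2^{k+1}-1}$ provided by the preceding proposition then yields
\[
0=Q^{2^{k+1}}(0)=Q^{2^{k+1}}(\widetilde{\psi}_*\oo{x}_{2^k-1})=\widetilde{\psi}_*(\oo{x}_{2^{k+1}-1})\ne 0,
\]
the desired contradiction.

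The main obstacle I anticipate is the bookkeeping in the last step: one must arrange compatible resolutions of $\F_2$ over $MU_*$ and $MU_*/I$ so that the effect of $\widetilde{\psi}_*$ on the $1$-line generators literally kills precisely the $\oo{x}_j$ with $x_j\in I$, and then confirm that neither KSS admits differentials or exotic extensions that could disturb this picture. The latter follows from collapse at $E_2$, by the same multiplicative-generation-on-the-$1$-line argument used throughout section \ref{sec:comps}, so the core of the argument reduces to a purely algebraic comparison of Koszul complexes.
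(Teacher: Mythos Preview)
Your argument is correct and follows the same route as the paper: assume the map is $E_\infty$, pass to the induced map $\HF_2\wedge_{MU}\HF_2\to\HF_2\wedge_{MU/I}\HF_2$, and use naturality of Dyer-Lashof operations together with $Q^{2^{k+1}}(\oo{x}_{2^k-1})=\oo{x}_{2^{k+1}-1}$ to derive a contradiction. One small caution: your assertion that $MU_*/I$ is literally a polynomial ring on the ``remaining $x_j$'' is not true for an arbitrary regular sequence, but (as you yourself flag in the final paragraph) all you actually need is that $\oo{x}_{2^{k+1}-1}$ has nonzero image in $\Tor_1^{(MU/I)_*}(\F_2,\F_2)$, and the paper is equally terse on this point.
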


\begin{proof}
Let $I$ be as above and let us use $E$ to denote $MU/I$.
Suppose that the quotient map 
\[
\varphi:MU \to E
\]
is a map of commutative $S$-algebras.
Consider the induced map of the spectral sequences converging to 
\[
\widetilde{\varphi}: \HF_2 \wedge_{MU} \HF_2 \to \HF_2 \wedge_E \HF_2.
\]
This map must preserve the action of the Dyer-Lashof algebra as it is a induced by a morphism of commutative $\HF_2$-algebras.
However, by assumption we have that some $x_{2^i-1}\in ker(\varphi_*)$ and so $\oo{x}_{2^i-1} \in ker \widetilde{\varphi}_*$.
For each $k$, $\oo{x}_{2^{i+k}-1}$ is a power operation on $\oo{x}_{2^i-1}$.
Therefore, each $\oo{x}_{2^{i+k}-1}$ must also be in the kernel, but they are not by construction of $I$.
\end{proof}

This should be compared with the results of Strickland, see \cite{StricklandMU}, where he realizes regular quotients of $MU_*$.
In his work, the ideal in question must satisfies a similar condition in order for the realization to be a commutative ring spectrum, see Proposition 6.2  in \cite{StricklandMU} for the relevant formula.

\subsection{Odd primary case}
\label{subsec: odd case}
When $p\neq 2$ the dual Steenrod algebra is no longer polynomial.
It is the tensor product of a polynomial algebra and an exterior algebra.
These exterior generators are what the $v_i$ detect.
In this subsection, we give the analogues of the above computations when $p\neq 2$.
First, we recall some facts about the dual Steenrod algebra and its structure as an algebra over the Dyer-Lashof algebra.

\begin{thm}[Milnor]
The dual Steenrod algebra is $\HH_*(\HF_p;\F_p) \iso \F_p [\xi_1, \xi_2, \xi_3, \ldots]\tensor \Lambda_{\F_p}(\tau_0,\tau_1,\tau_2,\ldots)$ (where $\Lambda$ denotes that the $\tau_i$'s are exterior generators).
The degree of $\vert \xi_i \vert=2p^i-2$ and $\vert \tau_i \vert=2p^i-1$.
The conjugation is given by 
\[
\oxi_i:=\chi_*(\xi_i)=\Sigma_{\alpha \in Part(i)}\Pi_{n=1}^{l(\alpha)}\xi^{p^{\sigma(i)}}_{\alpha(n)}
\]
where $l(\alpha)$ is the length of the ordered partition $\alpha$ of $i$ for all primes $p$.
\end{thm}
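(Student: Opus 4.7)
The plan is as follows. Since $\HH_*(\HF_p;\F_p) \iso \pi_*(\HF_p \wedge \HF_p)$ is the homology of a commutative $S$-algebra, it carries the structure of a graded-commutative Hopf algebra over $\F_p$, with coproduct dual to the composition product on the Steenrod algebra $\cA_p^*$. First I would invoke the Milnor–Moore structure theorem for connected graded-commutative Hopf algebras over a perfect field: any such Hopf algebra decomposes as a tensor product of monogenic Hopf algebras. Over $\F_p$ with $p$ odd, the only possibilities on even-degree generators are polynomial algebras (truncated, if finite), while odd-degree generators must be exterior. Thus the multiplicative form $\F_p[\xi_i] \tensor \Lambda_{\F_p}(\tau_j)$ is forced once one knows the indecomposables in each degree; one must then pin down those indecomposables and verify no truncation occurs.

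Next I would identify the generators and their degrees. The most efficient route is to compute $H^*(K(\F_p,n);\F_p)$ by induction on $n$ via the Serre spectral sequence of $K(\F_p,n-1) \to * \to K(\F_p,n)$, following Cartan–Serre; stabilizing then describes $\cA_p^* = H^*(\HF_p;\HF_p)$ as generated by the Bockstein $\beta$ and the Steenrod powers $P^i$ modulo the Adem relations, with an admissible basis indexed by sequences $(\epsilon_0,s_1,\epsilon_1,s_2,\ldots)$. Dualizing a Milnor-type basis, one reads off the polynomial generators $\xi_i$ in degree $2p^i-2$ (dual to $P^{p^{i-1}}\cdots P^p P^1$) and the exterior generators $\tau_j$ in degree $2p^j-1$ (dual to the corresponding strings with a leading Bockstein). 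An alternative I would pursue in parallel is to evaluate everything on $H^*(B\Z/p;\F_p) = \Lambda(e) \tensor \F_p[x]$, where the action is explicit and detection of all operations is essentially automatic.

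For the conjugation formula, I would first establish the coproduct $\psi(\xi_n) = \sum_{i=0}^{n} \xi_{n-i}^{p^i} \tensor \xi_i$ (with $\xi_0 = 1$), either by direct computation on $H^*(B\Z/p)$ or by dualizing the Cartan formula for $P^n$. Then the antipode identity $\mu(\chi \tensor 1)\psi = \eta \epsilon$ gives the recursion $\chi(\xi_n) = -\sum_{i=1}^{n} \chi(\xi_{n-i})^{p^i}\xi_i$ (interpreted at $p=2$ without signs), which I would unroll by induction on $n$ to obtain the sum-over-ordered-partitions formula in the statement, with $\sigma(i)$ recording the shift from iterated $p$th powers. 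The handling of $\chi(\tau_j)$ is analogous but trivial up to sign since $\tau_j$ is exterior primitive modulo the polynomial part.

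The main obstacle is really the computation of $H^*(K(\F_p,n);\F_p)$ underlying the identification of generators; once that is in hand, the Hopf-algebraic pieces (structure theorem and antipode recursion) are formal. A secondary subtle point at odd primes is the careful tracking of signs in the conjugation recursion, which the stated formula absorbs into the partition sum — I would verify the sign bookkeeping against small cases ($\chi(\xi_1), \chi(\xi_2)$) before trusting the general formula.
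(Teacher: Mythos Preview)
The paper does not prove this statement at all: it is stated as a classical theorem of Milnor and simply recalled as background for the computations in Section~5. There is therefore no proof in the paper to compare your proposal against.

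That said, your outline is a reasonable sketch of the standard approach to Milnor's theorem (Hopf algebra structure theorem to get the tensor-product form, Cartan--Serre computation of $H^*(K(\F_p,n);\F_p)$ or detection on $H^*(B\Z/p)$ to identify generators, and the antipode recursion from the coproduct to get the conjugation formula). If you intend to actually carry this out rather than cite it, be aware that the serious content is the Cartan--Serre computation, which you have only named; the rest is indeed formal once that is in hand. For the purposes of this paper, a citation to Milnor's original paper would be the expected treatment.
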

The map $\chi: \HF_p\wedge \HF_p \to \HF_p \wedge \HF_p$ is induced by the twist map.
We also need Steinberger's computation of the action of the Dyer-Lashof algebra on the dual Steenrod algebra.

\begin{thm}[Steinberger]
For all $p>2$, the dual Steenrod algebra is generated as an algebra over the Dyer-Lashof algebra by $\tau_0$.
The action is generated by the formulas
\begin{eqnarray*}
Q^{\rho(i)}\tau_0 &=& (-1)^i\oo{\tau}_i \\
\beta Q^{\rho(i)}\tau_0 &=& (-1)^i\oxi_i \\
\end{eqnarray*}
where $\rho(i):=\frac{p^i-1}{p-1}$.
In particular, we have that
\begin{eqnarray*}
Q^{p^{i-1}}(\oo{\tau}_{i-1}) &=& \oo{\tau}_i \\
\beta Q^{p^i}\oxi_i &=& \oxi_{i+1} \\
\end{eqnarray*}
for $i>0$.
\end{thm}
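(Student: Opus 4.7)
The plan is to identify $Q^{\rho(i)}\tau_0$ and $\beta Q^{\rho(i)}\tau_0$ by a combination of degree considerations, the Nishida relations, and induction on $i$. First I would observe that $|Q^{\rho(i)}\tau_0| = 1 + 2\rho(i)(p-1) = 2p^i - 1 = |\tau_i|$ and $|\beta Q^{\rho(i)}\tau_0| = 2p^i - 2 = |\xi_i|$, so degree alone restricts the possibilities substantially. In the relevant degrees, the space of indecomposable classes of $\HH_*(\HF_p;\F_p)$ is one-dimensional, spanned (modulo decomposables) by $\tau_i$, respectively $\xi_i$, or equivalently by $\oo{\tau}_i$, $\oxi_i$ since conjugation fixes these classes modulo decomposables. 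So it suffices to identify the coefficient of the indecomposable generator and then argue separately that no decomposable correction is present.

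Next I would proceed by induction on $i$. The base case $i=1$ is the direct identification $Q^1 \tau_0 = -\oo{\tau}_1$, which can be extracted from the known action of the Dyer-Lashof algebra on $\HH_*(B\Z/p;\F_p)$ together with the fact that $\tau_0$ is detected by the degree-one fundamental class. For the inductive step I would leverage the second pair of formulas in the statement: the Adem relations in the Dyer-Lashof algebra express $Q^{\rho(i)}$ as an admissible composite whose leading term is $Q^{p^{i-1}} Q^{\rho(i-1)}$, and combining this with the inductive hypothesis reduces the verification to showing $Q^{p^{i-1}}(\oo{\tau}_{i-1}) = \oo{\tau}_i$. That formula itself is then established by another application of the Nishida relations together with the Cartan formula to control potential decomposable terms. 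Finally, the formulas for $\beta Q^{\rho(i)}\tau_0$ and $\beta Q^{p^i}\oxi_i$ follow from the formulas for $Q^{\rho(i)}\tau_0$ and $Q^{p^{i-1}}\oo{\tau}_{i-1}$ by applying the Bockstein, using that $\beta$ is a derivation, that $\beta \oo{\tau}_i = \oxi_i$ (a consequence of $\beta \tau_i = \xi_i$ and the naturality of $\chi$), and that the mixed operation $\beta Q^r$ is itself one of the basic Dyer-Lashof operations in the mod $p$ theory.

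The main obstacle is the inductive step: the Adem relations at odd primes involve binomial coefficients modulo $p$ whose vanishing must be checked carefully, and the alternating sign $(-1)^i$ only appears after a bookkeeping-intensive normalization. A cleaner strategy, and the one originally followed by Steinberger in \cite{HRS}, is to transfer the computation from the extended-power construction $D_p S^1$ or from $B\Sigma_p$ via the Kudo transgression theorem: the relevant Dyer-Lashof operations on the fundamental classes there are understood explicitly, and transgression in the appropriate spectral sequence converts them into the operations on $\tau_0$ while also tracking the requisite signs. I would pursue this transgression approach in preference to the direct Adem-relation approach, since it both sidesteps the worst of the combinatorial computation and provides a conceptual explanation for the sign $(-1)^i$.
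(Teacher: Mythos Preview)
The paper does not prove this theorem at all: it is quoted as Steinberger's result from \cite{HRS} and used as a black-box input to the computations in Section~\ref{sec:comps}. So there is no proof in the paper to compare your proposal against. Your sketch is a plausible outline of how one could argue, and indeed you already identify the approach actually taken in \cite{HRS} (the Kudo transgression argument) as preferable to the direct Adem-relation induction; but for the purposes of this paper the statement is simply imported from the literature, and no argument is expected or supplied.
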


Before we proceed as above, we should mention that at an odd prime, $\BP$ is only known to be a commutative $S$-algebra at the primes $3$, which is due to \cite{HillLawson}.
Also, we work with the $p$-local Adams summand $\ell$ as well as $ku$.
They have different homotopy and homology as
\[
ku_{(p)}\we \bigvee\limits_{i=0}^{p-2}\sus^{2i} \ell.
\]
The computations in this section follow exactly the method carried out when $p=2$.
The difference between $\chi_*\tau_i$ and $-\tau_i$ is decomposable.
This follows as the we have that
\[
\tau_n +\Sigma_{i=0}^{n}\xi^{p^i}_{n-1}\chi_*\tau_i=0.
\]
While this means that the sign is not determined in the below formulas this will not be so concerning.
Our main use of these results is as obstructions, as we will see in \ref{sec:applications and interpretations}, and so we will see that their vanishing is what is most relevant.

\begin{prop}
The KSS $\Tor^{\ell_*}_* (\F_p,\F_p) \ssra \pi_*\HF_p \wedge_{\ell} \HF_p$ collapses at the $E_2$ page.
$\pi_* \HF_p \wedge_{\ell} \HF_p$ is an exterior algebra $E_{\F_p}[\oo{p},\oo{v}_1]$ with $Q^1(\oo{p})=\pm\oo{v}_1$ where $\vert \oo{p} \vert=1$ and $\vert \oo{v}_1 \vert=2p-1$.
\end{prop}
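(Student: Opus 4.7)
The plan is to follow, almost verbatim, the template established in Subsection \ref{subsec:ku over F_2} for $ku\to\HF_2$, with $\ell$ playing the role of $ku$ and odd-primary Steinberger data replacing the $2$-primary version. First I would compute the $E_2$-page. Since $\ell_*\iso\Z_{(p)}[v_1]$ with $|v_1|=2p-2$ and $(p,v_1)$ is a regular sequence presenting $\F_p$ as an $\ell_*$-module, the Koszul complex $\Lambda_{\ell_*}(p,v_1)$ is an $\ell_*$-free resolution of $\F_p$. Applying $\F_p\tensor_{\ell_*}-$ yields zero differentials, so
\[
\Tor^{\ell_*}_*(\F_p,\F_p)\iso E_{\F_p}(\oo{p},\oo{v}_1),\qquad |\oo{p}|=(1,0),\ |\oo{v}_1|=(1,2p-2).
\]
By Theorem \ref{thm: mult of KSS} the spectral sequence is multiplicative, and the $E_2$-page is multiplicatively generated on the $1$-line. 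Every class on the $1$-line is a permanent cycle for degree reasons (the target of $d_r$ on the $1$-line lies on the $(1-r)$-line, which vanishes for $r\geq 2$), so the spectral sequence collapses at $E_2$ with no room for multiplicative extensions in these bidegrees. Hence $\pi_*(\HF_p\wedge_\ell\HF_p)\iso E_{\F_p}(\oo{p},\oo{v}_1)$ as an algebra.

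Next I would identify the Dyer--Lashof action by the naturality trick used throughout Section \ref{sec:comps}: compare the above KSS to
\[
\Tor^{\ell_*}_*(\HH_*(\ell;\F_p),\F_p)_* \Rightarrow \HH_*(\HF_p;\F_p),
\]
where the map is induced by the unit $\varphi:\ell\to\HF_p$. Since $p$ and $v_1$ act trivially on $\HH_*(\ell;\F_p)$, the same Koszul complex computes this $\Tor$ (tensored up over $\ell_*$), and again the spectral sequence collapses. The generator $\oo{p}\in\Tor^{\ell_*}_1(\HH_*(\ell;\F_p),\F_p)_0$ must detect $\tau_0\in\HH_1(\HF_p;\F_p)$ since it is the unique nonzero class in that bidegree, and $\oo{v}_1$ lands in degree $2p-1$, where (modulo decomposables, cf.\ Lemma \ref{lemma:decomposables}) it must detect $\tau_1$ or $\oo{\tau}_1$; the standard relationship between $v_1$ and the exterior generator of the dual Steenrod algebra, as recorded in Ravenel \cite{GreenBook}, makes this identification precise.

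Then I would invoke Corollary \ref{cor:Hinfty structure on filtration} together with the multiplicativity/naturality of the $\Hi$-filtration on the K\"unneth spectral sequence: the map of KSSs induced by $\varphi$ preserves power operations. Steinberger's computation gives $Q^{\rho(1)}(\tau_0)=\pm\tau_1$, i.e.\ $Q^1(\tau_0)=\pm\tau_1$ in $\HH_*(\HF_p;\F_p)$, and this lifts to a relation $Q^1(\oo{p})=\pm\oo{v}_1 + (\text{decomposable correction})$ on the $E_2$-page of the upper KSS. Pushing forward along $\check\varphi$ sends $\HH_*(\ell;\F_p)$ to $\F_p$, killing all decomposable corrections involving positive-degree elements of $\HH_*(\ell;\F_p)$, and so yields $Q^1(\oo{p})=\pm\oo{v}_1$ in $\pi_*(\HF_p\wedge_\ell\HF_p)$.

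The only genuine obstacle is the bookkeeping for potential decomposable corrections in the operation: one must check that the analog of the $\epsilon\,\oo{2}\xi_1^2$ term from the $ku$ computation lies in the kernel of $\check\varphi$, but this follows formally as above since $\HH_{>0}(\ell;\F_p)$ maps to zero. Degree counting then rules out any further operations beyond $Q^1(\oo p)=\pm\oo v_1$ (and its image under the Cartan formula), completing the proof.
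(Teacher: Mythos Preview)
Your proposal is correct and matches the paper's own approach: the paper gives no separate proof for $\ell$, stating only that it ``follows the exact same lines as in the case for $ku$ and $p=2$,'' and your write-up is precisely that argument with the odd-primary Steinberger data substituted in. The bookkeeping you flag about decomposable corrections is handled exactly as you say, by pushing forward along $\check\varphi$.
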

The proof of this follows the exact same lines as in the case for $ku$ and $p=2$, which is not surprising as $ku_{(2)} \we \ell$ at the prime $2$.
We have the following result regarding $ku$ at odd primes.

\begin{prop}
The KSS $\Tor^{ku_*}_* (\F_p,\F_p) \ssra \pi_*\HF_p \wedge_{ku} \HF_p$ collapses at the $E_2$ page.
$\pi_* \HF_p \wedge_{ku} \HF_p$ is an exterior algebra $E_{\F_p}[\oo{p},\oo{v}]$ where $\vert \oo{p} \vert=1$ and $\vert \oo{v} \vert=3$.
The Dyer-Lashof algebra acts trivially for degree reasons.
\end{prop}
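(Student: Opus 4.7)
The plan is to run the argument from Subsection~\ref{subsec:ku over F_2} essentially verbatim, adapted for odd $p$. First I compute the $E_2$-page: $(p, v) \subset ku_* = \Z[v]$ is a regular sequence with quotient $\F_p$, so the Koszul complex $\Lambda_{ku_*}(p, v)$ provides a free $ku_*$-resolution of $\F_p$. After applying $\F_p \tensor_{ku_*} -$ all the differentials vanish and we obtain $\Tor^{ku_*}_*(\F_p, \F_p) \iso E_{\F_p}[\oo{p}, \oo{v}]$ with $\oo{p} \in E_2^{1,0}$ in total degree $1$ and $\oo{v} \in E_2^{1,2}$ in total degree $3$.

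For collapse, Theorem~\ref{thm: mult of KSS} makes the KSS multiplicative, so its $E_2$-page is a bigraded $\F_p$-algebra generated by the two $1$-line classes. For $r \geq 2$ any differential $d_r$ out of the $1$-line lands in filtration $1-r < 0$ and hence vanishes, so $\oo{p}$ and $\oo{v}$ are permanent cycles; multiplicativity then forces the whole page to collapse. There are no multiplicative extensions, since $|\oo{p}|$ and $|\oo{v}|$ are both odd and $p$ is odd, so graded commutativity gives $\oo{p}^2 = \oo{v}^2 = 0$; thus $\pi_* \HF_p \wedge_{ku} \HF_p \iso E_{\F_p}[\oo{p}, \oo{v}]$ as $\F_p$-algebras.

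For triviality of the Dyer-Lashof action I inspect the degree support $\{0,1,3,4\}$ of the target. A nonzero operation $Q^s$ with $s \geq 1$ shifts degree by $2s(p-1) \geq 2(p-1) \geq 4$, and $\beta Q^s$ with $s \geq 1$ shifts by $2s(p-1) - 1 \geq 2p-3 \geq 3$. Applied to any of the additive generators $1,\oo{p},\oo{v},\oo{p}\oo{v}$, the target degree lies outside $\{0,1,3,4\}$ for all $p \geq 5$. At $p = 3$ the only borderline case is $\beta Q^1 \oo{p}$, which would land in total degree $4$; I would kill this using naturality of the $\Hi$-filtration along $\varphi : ku \to \HF_p$: via the comparison map $\check{\varphi}$ from $\Tor^{ku_*}_*(\HH_*(ku;\F_p), \F_p) \ssra \HH_*(\HF_p;\F_p)$, the element $\beta Q^1 \tau_0 = \pm \oxi_1$ is detected on the $0$-line of the source spectral sequence, and the $0$-line of $\Tor^{ku_*}_*(\F_p,\F_p)$ is concentrated in total degree $0$, so its image under $\check{\varphi}$ must vanish.

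The main obstacle is really the bookkeeping at the $p = 3$ edge case; once the degree count is organized the result follows with the same pattern as the $p = 2$ calculation, but is considerably simpler, because the degree gap $2(p-1) \geq 4$ at odd primes leaves almost no room for operations to land inside the narrow support of $\pi_* \HF_p \wedge_{ku} \HF_p$.
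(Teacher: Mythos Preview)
Your computation of the $E_2$-page, the collapse via multiplicativity, and the ring structure all match the paper's approach exactly. The only divergence is in handling the Dyer-Lashof action, where you make the $p=3$ case harder than it needs to be.

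The paper dispatches everything with a single observation: the lowest potentially nonzero operation on $\oo{p}$ is $Q^1(\oo{p})$, which lands in $\pi_{2p-1}(\HF_p\wedge_{ku}\HF_p)=0$. That already suffices, because at odd primes $\beta Q^s$ is literally the composite $\beta\circ Q^s$ with the Bockstein; once $Q^1(\oo{p})=0$ you get $\beta Q^1(\oo{p})=\beta(0)=0$ for free, and the remaining operations land in still higher (vanishing) degrees. Your degree count treats $\beta Q^s$ as an independent primitive operation with its own shift $2s(p-1)-1$, which is what produces the spurious ``borderline'' case at $p=3$.

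Your proposed workaround via naturality along $\check{\varphi}$ is plausible in spirit but would need more care than you give it: you would have to know exactly where $\oxi_1$ is detected in the spectral sequence $\Tor^{ku_*}(\HH_*(ku;\F_p),\F_p)\Rightarrow\HH_*(\HF_p;\F_p)$, and also argue that the $\Hi$-filtration pushes $\beta Q^1$ of a $1$-line class down to filtration $\leq 0$ in the way you want. None of this is needed once you remember that $\beta Q^s=\beta\circ Q^s$.
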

The first possible nontrivial Dyer-Lashof operation is $Q^1(\oo{p})$, but this must be zero because $Q^1(\oo{p})\in \pi_{2p-1}(\HF_p \wedge_{ku} \HF_p)=0$.

\begin{prop}
The KSS $\Tor^{\BP_*}_* (\F_3,\F_3) \ssra \pi_*\HF_3 \wedge_{\BP} \HF_3$ collapses at the $E_2$ page.
$\pi_* \HF_3 \wedge_{\BP} \HF_3$ is an exterior algebra $E_{\F_3}[\oo{3},\oo{v}_1,\oo{v}_2]$ with $Q^1(\oo{3})=\pm\oo{v}_1$, $Q^4(\oo{3})=\pm\oo{v}_2$, $Q^3(\oo{v}_1)=\pm\oo{v}_2$, and $Q^4(\oo{3}\oo{v}_1)=\pm\oo{v}_1\oo{v}_2$ where $\vert \oo{3} \vert=1$, $\vert \oo{v}_1 \vert=5$ and $\vert \oo{v}_2 \vert=17$.
\end{prop}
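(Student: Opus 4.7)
The plan is to follow the structure of the $p=2$ proof for $\BP\to\HF_2$ verbatim, substituting the odd primary versions of the dual Steenrod algebra and Steinberger's Dyer-Lashof computation. First, working at the prime $3$ (where $\BP$ is a commutative $S$-algebra by Hill--Lawson), the sequence $(3, v_1, v_2)\subset \BP_*$ is regular, so I would compute $\Tor^{\BP_*}_*(\F_3, \F_3)$ using the Koszul complex $\Lambda_{\BP_*}(3, v_1, v_2)$. After applying $\F_3\otimes_{\BP_*}-$ all differentials vanish, giving
\[
\Tor^{\BP_*}_*(\F_3,\F_3) \iso E_{\F_3}[\oo{3}, \oo{v}_1, \oo{v}_2]
\]
with $|\oo{3}|=(1,0)$, $|\oo{v}_1|=(1,4)$, $|\oo{v}_2|=(1,16)$, matching the claimed total degrees $1, 5, 17$. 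Multiplicativity of the KSS (Theorem~\ref{thm: mult of KSS}) together with the fact that the $E_2$-page is generated by classes on the $1$-line forces all higher differentials to vanish, since the $1$-line consists of permanent cycles; thus the spectral sequence collapses, yielding the claimed additive structure of $\pi_*\HF_3\wedge_{\BP}\HF_3$.

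Second, I would compute the Dyer-Lashof action by comparing with the KSS
\[
\Tor^{\BP_*}_*(\HH_*(\BP;\F_3),\F_3) \ssra \HH_*(\HF_3;\F_3),
\]
which collapses for the same reason (the action of $3$, $v_1$, $v_2$ on $\HH_*(\BP;\F_3)$ is trivial, so the Koszul complex splits). Exactly as in the $p=2$ analysis, the class $\oo{3}\in E^2_{1,0}$ detects $\tau_0\in\HH_1(\HF_3;\F_3)$ (there are no other classes in this bidegree), while by Ravenel's identification of the Hurewicz image of $v_i$ in terms of $\xi_{i+1}$ and $\tau_i$, each $\oo{v}_i$ detects either $\tau_i$ or its conjugate $\oo{\tau}_i$ modulo decomposables.

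Third, the map $\check{\varphi}$ of $E_2$-pages induced by $\varphi\colon \BP\to\HF_3$ is the obvious projection
\[
\HH_*(\BP;\F_3)\otimes E_{\F_3}[\oo{3},\oo{v}_1,\oo{v}_2] \lra \F_3 \otimes E_{\F_3}[\oo{3},\oo{v}_1,\oo{v}_2],
\]
coming from the reduction $\HH_*(\BP;\F_3)\to\F_3$. Since $\varphi$ is a map of commutative $S$-algebras, Corollary~\ref{cor:Hinfty structure on filtration} guarantees that $\check{\varphi}$ commutes with Dyer-Lashof operations. Steinberger's formulas $Q^{\rho(i)}\tau_0=\pm\oo{\tau}_i$ and $Q^{p^{i-1}}\oo{\tau}_{i-1}=\oo{\tau}_i$ specialize at $p=3$, $i=1,2$ to $Q^1\tau_0=\pm\oo{\tau}_1$, $Q^4\tau_0=\pm\oo{\tau}_2$, and $Q^3\oo{\tau}_1=\oo{\tau}_2$. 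Pushing these through $\check{\varphi}$ and using the analogue of Lemma~\ref{lemma:decomposables} (the indecomposable parts of $\tau_i$ and $\oo{\tau}_i$ agree, and all decomposable correction terms lie in $\ker\check{\varphi}$) produces the operations $Q^1(\oo{3})=\pm\oo{v}_1$, $Q^4(\oo{3})=\pm\oo{v}_2$, and $Q^3(\oo{v}_1)=\pm\oo{v}_2$. The final relation $Q^4(\oo{3}\,\oo{v}_1)=\pm\oo{v}_1\oo{v}_2$ then follows from the internal Cartan formula applied to $Q^4$ on the product, exactly as in the $p=2$ case.

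The main obstacle is not the algebraic bookkeeping but the control over signs and decomposable correction terms at an odd prime: because the formula $\chi_*(\tau_i)$ differs from $-\tau_i$ by decomposables involving $\xi$'s, one needs to check that every decomposable that appears after applying Steinberger's operations lies in the kernel of $\check{\varphi}$. This is why I would present the operations only up to sign, mirroring the conventions of Subsection~\ref{subsec: odd case}; the signs are immaterial for the obstruction-theoretic applications but the vanishing of the decomposable error terms is essential and should be recorded with care.
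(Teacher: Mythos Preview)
Your proposal is correct and follows essentially the same approach as the paper, which in fact does not give a separate proof for this proposition but simply declares that ``the computations in this section follow exactly the method carried out when $p=2$.'' Your write-up is a faithful execution of that instruction: Koszul complex for the regular sequence $(3,v_1,v_2)$, collapse from multiplicative generation on the $1$-line, comparison with the collapsing spectral sequence for $\HH_*(\BP;\F_3)$, Steinberger's odd-primary formulas $Q^{\rho(i)}\tau_0=\pm\oo{\tau}_i$ and $Q^{p^{i-1}}\oo{\tau}_{i-1}=\oo{\tau}_i$ specialized to $p=3$, transport along $\check{\varphi}$ killing decomposables, and the Cartan formula for the product class.
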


Finally, we have the case of $MU$.
\begin{prop}
The KSS $\Tor^{MU_*}_* (\F_p,\F_p) \ssra \pi_*\HF_p \wedge_{MU} \HF_p$ collapses at the $E_2$ page.
$\pi_* \HF_p \wedge_{MU} \HF_p$ is an exterior algebra $E_{\F_p}[\oo{p},\oo{x}_1,\oo{x}_2, \ldots]$ with $Q^{\rho(i)}(\oo{p})=\pm\oo{x}_{p^{i-1}-1}$ and $Q^{p^i}(\oo{x}_{p^{i-1}-1})=\pm\oo{x}_{p^i-1}$ where $\vert \oo{p} \vert=1$ and $\vert \oo{x}_n \vert=2n+1$.
\end{prop}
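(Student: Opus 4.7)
The plan is to follow the template established for $MU \to \HF_2$ with the appropriate bookkeeping for odd primes. First I would compute the $E_2$-page using the Koszul complex
\[
\Lambda := \Lambda_{MU_*}(\oo{p},\oo{x}_1,\oo{x}_2,\ldots)
\]
associated with the regular sequence $(p,x_1,x_2,\ldots) \subset MU_*$ that presents $\F_p$ as a regular quotient. Since the sequence is regular, $\Lambda$ is a free resolution, and because the $MU_*$-action on $\F_p$ is trivial, the differential in $\F_p \otimes_{MU_*} \Lambda$ vanishes. Hence
\[
\Tor^{MU_*}_*(\F_p,\F_p) \;\iso\; E_{\F_p}[\oo{p},\oo{x}_1,\oo{x}_2,\ldots]
\]
with $|\oo{p}|=1$ and $|\oo{x}_n|=2n+1$. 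The $E_2$-page is multiplicatively generated by classes on the $1$-line, which are automatically permanent cycles, so by the multiplicativity of the KSS (Theorem \ref{thm: mult of KSS}) the spectral sequence collapses at $E_2$ and $\pi_* \HF_p \wedge_{MU} \HF_p \iso E_{\F_p}[\oo{p},\oo{x}_1,\oo{x}_2,\ldots]$ as algebras.

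Next I would set up the comparison of KSSs induced by $\varphi: MU \to \HF_p$, namely
\[
\check{\varphi}_* : \Tor^{MU_*}_*(\HH_*(MU;\F_p),\F_p) \lra \Tor^{MU_*}_*(\F_p,\F_p),
\]
with target spectral sequence converging to $\HH_*(\HF_p;\F_p)$. As in the $p=2$ case, I would use the same Koszul complex $\Lambda$ to compute both sides; the point (using the Hurewicz behaviour of $MU$ recalled in section 2 of \cite{BakerRichterAdams}, that $h(x_{p^k-1}) = -p\,b_{p^k-1}$ modulo decomposables) is that passing to homology of $\HH_*(MU;\F_p) \tensor_{MU_*} \Lambda$ kills all $x_i$ except those with $i=p^k-1$, yielding an isomorphism with $\Tor^{BP_*}(\HH_*(BP;\F_p),\F_p)$. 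Collapse on this side follows from the same $1$-line argument. By Corollary \ref{cor:Hinfty structure on filtration} applied to the map of commutative $S$-algebras $\varphi$, the map $\check{\varphi}_*$ respects the $\Hi$-filtration and hence the Dyer--Lashof action on the abutments.

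Now I would identify what the generators detect. Choosing our Hazewinkel-compatible generators so that $\oo{x}_{p^i-1}$ maps to $\oo{v}_i \in \Tor^{BP_*}_1(\HH_*(BP;\F_p),\F_p)$, the discussion in Chapter 4, Section 2 of \cite{GreenBook} together with Lemma \ref{lemma:decomposables} (and its odd-primary analogue that $\chi_*\tau_i \equiv -\tau_i$ modulo decomposables) shows that $\oo{x}_{p^i-1}$ detects $\pm\tau_i$ (or $\pm\oo{\tau}_i$, which agree modulo $\ker\check{\varphi}$) in $\HH_*(\HF_p;\F_p)$, while $\oo{p}$ detects $\pm\tau_0$. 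Steinberger's computation gives $Q^{\rho(i)}(\tau_0) = (-1)^i \oo{\tau}_i$ and $Q^{p^{i-1}}(\oo{\tau}_{i-1}) = \oo{\tau}_i$. Applying $\check{\varphi}_*$ and using that decomposables in $\HH_*(MU;\F_p)$ lie in $\ker\check{\varphi}_*$, we conclude
\[
Q^{\rho(i)}(\oo{p}) = \pm\,\oo{x}_{p^{i-1}-1}, \qquad Q^{p^i}(\oo{x}_{p^{i-1}-1}) = \pm\,\oo{x}_{p^i-1}.
\]

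The main obstacle is the detection step: pinning down which class in the dual Steenrod algebra is hit by $\oo{x}_{p^i-1}$, and verifying that the ambiguity (sign, and the difference between $\tau_i$ and $\oo{\tau}_i$) is absorbed by decomposables that the map $\check{\varphi}_*$ annihilates. Once this is done, naturality of the $\Hi$-structure on the K\"unneth filtration, guaranteed by Corollary \ref{cor:Hinfty structure on filtration}, transfers the operations from the known computation in $\HH_*(\HF_p;\F_p)$ to $\pi_*\HF_p\wedge_{MU}\HF_p$ without further work.
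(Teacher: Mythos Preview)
Your proposal is correct and follows exactly the approach the paper indicates: the odd-primary proof is simply the $p=2$ argument for $MU$ redone with the Koszul complex for $(p,x_1,x_2,\ldots)$, the reduction to $\Tor^{BP_*}(\HH_*(BP;\F_p),\F_p)$ via the Hurewicz behaviour, and Steinberger's odd-prime formulas for the $\tau_i$ in place of the $\xi_i$. The only point worth tightening is the indexing in your detection step: since $\lvert\tau_i\rvert=2p^i-1$ and $\lvert\oo{x}_n\rvert=2n+1$, the class $\oo{x}_{p^i-1}$ detects $\pm\tau_i$, so make sure the indices in your final displayed formulas are consistent with that identification.
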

And again, we have a corollary regarding complex orientations that lift to maps of commutative $S$-algebras.

\begin{cor}
\label{cor:strickland analog at p}
Let $I$ be an ideal of $MU_*$ generated by a regular sequence. 
If $I$ contains a finite nonzero number of the $x_{p^i-1}$, then the quotient map $MU \to MU/I$ cannot be realized as a map of commutative $S$-algebras.
\end{cor}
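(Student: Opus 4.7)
The plan is to mimic exactly the argument given for Corollary \ref{cor:strickland analog} in the $p=2$ case, now using the odd-primary Dyer--Lashof computation from the preceding proposition. The key input is that the operations $Q^{p^i}(\oo{x}_{p^{i-1}-1})=\pm\oo{x}_{p^i-1}$ propagate membership in the kernel of a map induced by a commutative $S$-algebra map from $MU$, and this propagation is incompatible with only finitely many of the $x_{p^i-1}$ lying in $I$.

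Concretely, I would argue by contradiction. Suppose the quotient $\varphi: MU \to E := MU/I$ were realized as a map of commutative $S$-algebras. Applying $\HF_p\wedge_{MU}\HF_p \to \HF_p\wedge_E\HF_p$ gives an induced map $\widetilde{\varphi}$ of commutative $\HF_p$-algebras, and by the multiplicativity and $\Hi$-structure on the K\"unneth filtration (Theorem \ref{thm: mult of KSS}), $\widetilde{\varphi}_*$ must commute with the action of the Dyer--Lashof algebra on homotopy. Let $i$ be minimal with $x_{p^i-1}\in I$; then by construction $\oo{x}_{p^i-1}\in\ker(\widetilde{\varphi}_*)$, since the class $\oo{x}_{p^i-1}\in\pi_{2p^i-1}(\HF_p\wedge_{MU}\HF_p)$ is detected in filtration 1 by a generator corresponding to $x_{p^i-1}\in I$, and this class dies after passing to $\Tor^{E_*}_1(\F_p,\F_p)$ precisely because $x_{p^i-1}$ becomes a relation.

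The final step is to iterate the power operation. By the previous proposition, for every $k\geq 0$ one has $Q^{p^{i+k}}(\oo{x}_{p^{i+k-1}-1})=\pm\oo{x}_{p^{i+k}-1}$. Since $\ker(\widetilde{\varphi}_*)$ is closed under the Dyer--Lashof action, induction on $k$ shows $\oo{x}_{p^{i+k}-1}\in\ker(\widetilde{\varphi}_*)$ for all $k\geq 0$, so infinitely many of the generators $x_{p^j-1}$ must belong to $I$. This contradicts the hypothesis that only finitely many of the $x_{p^i-1}$ lie in $I$, completing the proof.

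The only subtle point, and the step most deserving of care, is the justification that $\oo{x}_{p^{i+k}-1}$ actually maps to zero (rather than merely to something in higher filtration): because the $E_2$-page of the target KSS is generated on the 1-line and the map of spectral sequences is induced by the map of Koszul resolutions, a class on the 1-line maps to zero on $E_\infty$ iff its defining generator of $I$ lies in the kernel at the level of $E_2$. Once that is in hand, the Dyer--Lashof propagation argument is formal and parallels Corollary \ref{cor:strickland analog} verbatim, with the only modification being the use of $Q^{p^i}$ in place of $Q^{2^i}$ and the presence of the signs $\pm$, which are irrelevant for the kernel condition.
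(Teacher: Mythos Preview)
Your proposal is correct and follows essentially the same approach as the paper: the paper does not spell out a separate proof for the odd-primary corollary, but your argument tracks the $p=2$ proof of Corollary~\ref{cor:strickland analog} verbatim, replacing $Q^{2^i}$ by $Q^{p^i}$ and noting the irrelevance of the signs. If anything you are more careful than the paper, which asserts without elaboration that $\oo{x}_{2^{i+k}-1}\notin\ker\widetilde{\varphi}_*$ when $x_{2^{i+k}-1}\notin I$, whereas you explicitly flag this filtration/kernel identification as the step requiring care.
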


\section{Applications and Interpretations}
\label{sec:applications and interpretations}
In this section we present some applications and interpretations of the computations in section \ref{sec:comps}.
We must first develop ways of relating our computations to null-homotopies of maps.
Then we interpret our power operations as giving a dependence between choices of null-homotopies of different elements.
We first record some basic results relating classes in the KSS to differences of null-homotopies.
This difference is related to the ``suspension'' morphism which is studied in the setting of the B\"okstedt spectral sequence.
This suspension morphism is gotten by looking at the action of the fundamental class of $S^1$ on $S^1\tensor R$, however our relative smash products do not admit such a description.
When $R=C^*(X;\mathbb{Q})$ with $X$ simply connected then $S^1\tensor R \we C^*(LX;\mathbb{Q})$ while $\mathbb{Q}\wedge_R \mathbb{Q}\we C^*(\Omega X;\mathbb{Q})$ and so there is no circle action to utilize.
We begin with the following Theorem.

\begin{prop}
\label{prop: Tor-1 realization}
Suppose that $\varphi: R \to A$ is a map of commutative $S$-algebras that is surjective in homotopy.
Then $\forall x \in I:=ker(\varphi_*)$ with nonzero image in $I/I^2$ there is a nonzero class $\oo{x} \in \Tor^{R_*}_1(A_*,A_*)$.
If this class is not an ``eventual'' boundary in the KSS, then it can be realized as the difference of two null-homotopies of $\varphi_*(x) \in A_*$.
\end{prop}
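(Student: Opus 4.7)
The plan is to establish this in three steps: an algebraic identification of $\Tor^{R_*}_1(A_*,A_*)$, a concrete realization of $\oo{x}$ in the K\"unneth filtration, and an interpretation of the resulting lift as a difference of two null-homotopies.

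First I will compute $\Tor^{R_*}_1(A_*, A_*)$ using the surjection $\varphi_*: R_* \to A_*$. Since $A_* \iso R_*/I$, applying $A_* \tensor_{R_*} -$ to the short exact sequence $0 \to I \to R_* \to A_* \to 0$ gives a long exact Tor sequence in which $\Tor^{R_*}_1(A_*,R_*)=0$ and the map $A_* \tensor_{R_*} I \to A_* \tensor_{R_*} R_* = A_*$ is zero because $I$ annihilates $A_*$. Hence
\[
\Tor^{R_*}_1(A_*,A_*) \iso A_* \tensor_{R_*} I \iso I/I^2,
\]
so any $x \in I$ with nonzero image in $I/I^2$ determines a nonzero class $\oo{x}$ on the $s=1$ line of the KSS in total degree $|x|+1$.

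Next I will realize $\oo{x}$ geometrically via the K\"unneth filtration. By Proposition \ref{prop:resolution to filtration} I lift a free $R_*$-resolution of $A_*$ beginning with $F_0 = R$ (and $\varepsilon = \varphi$) and $F_1 = \bigvee_\alpha S^{|x_\alpha|}_R$, indexed by a generating set of $I$ containing $x$, with the $x$-summand of $d_0$ equal to $\widetilde{x}: S^{|x|}_R \to R$. Smashing the resulting filtration $\Ab$ with $A$ over $R$ produces the K\"unneth filtration, and $\oo{x}$ is represented on the 1-line by the $x$-summand of the attaching cofiber $\sus F_1 \wedge_R A$. By hypothesis $\oo{x}$ is not an eventual boundary, so it survives to a nonzero class in $\pi_{|x|+1}(A \wedge_R A)$.

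To express this lift as a difference of null-homotopies I use the two factor inclusions $j_L, j_R: A \to A \wedge_R A$. Since $\varphi_*(x) = 0$, the $R$-module map $\varphi \circ \widetilde{x}: S^{|x|}_R \to A$ is null-homotopic; pick such a null-homotopy $H: CS^{|x|}_R \to A$. Because $A$ is a commutative $R$-algebra, $j_L \circ \varphi = j_R \circ \varphi: R \to A \wedge_R A$ (both equal the unit of the $R$-algebra $A \wedge_R A$). Consequently $j_L \circ H$ and $j_R \circ H$ are two null-homotopies of the common map $j_L \varphi \widetilde{x} = j_R \varphi \widetilde{x}$, and their difference gives a map
\[
\sus S^{|x|}_R \we CS^{|x|}_R \cup_{S^{|x|}_R} CS^{|x|}_R \lra A \wedge_R A,
\]
i.e., a class in $\pi_{|x|+1}(A \wedge_R A)$ that lifts $\oo{x}$.

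The main obstacle will be the last identification: verifying that $j_L H - j_R H$ is genuinely detected in filtration one by $\oo{x}$ rather than by a filtration-zero correction. This reduces to matching the K\"unneth cofiber sequence $F_1 \wedge_R A \to A \to A_1 \wedge_R A$ with the construction above, which follows once one observes that the attaching map of the $x$-summand of $A_1$ over $A_0 = R$ is $\widetilde{x}$ and that $\alpha_0: A_0 \to A_1$ in Proposition \ref{prop:resolution to filtration} is assembled out of precisely this kind of null-homotopy data. The indeterminacy in the choice of $H$ contributes only a filtration-zero term, which drops out modulo eventual boundaries and hence does not affect the identification.
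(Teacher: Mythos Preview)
Your argument is correct and closely parallels the paper: the algebraic identification $\Tor^{R_*}_1(A_*,A_*)\cong I/I^2$ via the short exact sequence $0\to I\to R_*\to A_*\to 0$ is exactly what the paper does, and your choice $F_0=R$, $\varepsilon=\varphi$ for the first stage of the K\"unneth filtration matches the paper's setup.

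The one genuine difference is in how the ``difference of two null-homotopies'' is produced. You pick a \emph{single} null-homotopy $H:CS^{|x|}_R\to A$ of $\varphi\circ\widetilde{x}$ and push it through the two unit maps $j_L,j_R:A\to A\wedge_R A$, then glue $j_LH$ and $j_RH$ along the common restriction to obtain a class in $\pi_{|x|+1}(A\wedge_R A)$. The paper instead uses two \emph{different} null-homotopies: one is a chosen $x':CS^n\to A$, and the other, $x'':CS^n\to A_1$, is extracted from the cofiber/attaching data of $A_0\to A_1$ itself. Suturing these lands the resulting sphere directly in $A\wedge_R A_1$, i.e.\ manifestly in filtration~$1$, and a short diagram chase then identifies its image in $A\wedge_R\Sigma F_1$ with the cycle representing $\oo{x}$. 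This is precisely the ``main obstacle'' you flag: by building one of the two null-homotopies out of the filtration, the paper sidesteps the need to argue after the fact that the glued sphere is detected in filtration~$1$ rather than~$0$. Your symmetric $j_L/j_R$ construction is in fact what the paper carries out separately in a later proposition; it is cleaner conceptually, but the asymmetric version here is what makes the filtration identification immediate.
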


\begin{proof}
Represent $x\in \pi_n R$ by a map $x: S^n \to R$.
Since $x\in I$, we know that the map $S^n \to R$ becomes null-homotopic when followed by $\varphi$.
Choose a null-homotopy and represent it as the following commutative diagram
\[
\xymatrix{
S^n \ar[rr]^x \ar[dd]_i&
&
R \ar[dd]_{\varphi}\\
&
&
\\
CS^n \ar[rr]^{x'}&
&
A.
}
\]
As $\varphi$ is onto in homotopy, we can take the first stage of the K\"unneth resolution (and filtration) to be $F_0=A_0=R$.
By assumption, $x\in \pi_*F_0= \F_0$ is in the kernel of the augmentation.
Therefore, $x \in im(\F_1 \to \F_0)$, so we get the following commutative diagram:

\[
\xymatrix{
S^n \ar[dd]^{\widetilde{x}} \ar[rr]^1&
&
S^n \ar[dd]^x \ar[rr]^i&
&
CS^n \ar[dd]_{x''}\\
&
&
&
&
\\
F_1 \ar[rr]^d&
&
F_0=R \ar[rr]^{\alpha_0}&
&
A_1.
}
\]
We can then suture the right side of the above diagram with the  diagram expressing the null-homotopy of $x$ along $x$ to produce
\[
\xymatrix{
S^n \ar[rrr] \ar[ddd] \ar[dr]^x&
&
&
CS^n \ar[dr]_{x'}\\
&
R \ar[rrr] \ar[ddd]&
&
&
A_1 \ar[ddd]\\
&
&
&
&
\\
CS^n \ar[dr]^{x''}&
&
&
&
\\
&
A \ar[rrr]&
&
&
A \wedge_R A_1
}
\]
as both $A$ and $A_1$ map to $A\wedge_R A_1$.
This induces a map from $S^{n+1}$ to $A \wedge_R A_1$.
Denote elements obtained in this way from $x\in \ker\varphi_*$ as $\oo{x} \in \pi_{n+1} A \wedge_R A_1$.
To see that this element contributes to 
\[
\ker(\pi_*(A)\tensor_{\R_*}\F_1\lra \pi_*(A)\tensor_{\R_*}\F_0),
\]
notice that the above definition of $\oo{x}$ gives the following commutative diagram

\[
\xymatrix{
CS^n \ar[dd] \ar[rr]&
&
S^{n+1} \ar[dd]^{\oo{x}} \ar[rr]^{\iso}&
&
S^{n+1} \ar[dd] \ar[rr]&
&
CS^{n+1} \ar[dd]
\\
&
&
&
&
&
&
\\
A=A \wedge_R A_0 \ar[rr]^{1_A \wedge_R \alpha_0} &
&
A \wedge_R A_1 \ar[rr]^{1_A \wedge_R \beta_1}&
&
A \wedge_R \sus F_1 \ar[rr]&
&
\sus A=A \wedge_R \sus F_0.
}
\]
We will now show that the image of $\oo{x}$ in $\pi_* A \tensor_{R_*} \F_1$ survives to a class in $\Tor^{R_*}_1(A_*,A_*)$.
Since $\varphi$ is surjective in homotopy, we have that $A_*\iso R_*/I$.
Applying $\Tor^{R_*}_*(A_*,-)$ to the short exact sequence
\[
0 \lra I \lra R_* \sr{\varphi_*}\lra A_* \lra 0
\]
gives a long exact sequence in $\Tor$.
This long exact sequence degenerates to the isomorphisms
\[
 \Tor^{R_*}_{n+1}(A_*,A_*)\iso \Tor^{R_*}_{n}(A_*,I).
\]
Therefore $\Tor^{R_*}_1(A_*,A_*)\iso A_* \tensor_{R_*} I$, which we compute by applying $-\tensor_{R_*} I$ to the right exact sequence
\[
I \to R_* \sr{\varphi_*}\lra A_* \lra 0
\]
which gives the exact sequence
\[
I \tensor_{R_*} I \sr{\mu}\lra I \lra A_* \tensor_{R_*} I \lra 0.
\]
Therefore, 
\[
\Tor^{R_*}_1(A_*,A_*) \iso A_*\tensor_{R_*} I\iso I/\mu(I) \iso I/I^2.
\]
Since we have assumed that $x$ has nonzero image in $I/I^2$ the element $\oo{x}$ is nonzero in $\Tor^{R_*}_1(A_*,A_*)$ as desired.
All that remains is that this class survives to contribute to $\pi_* A \wedge_R A$, which it does by assumption.
\end{proof}

We will apply the above result frequently to elements of $\pi_* A \wedge_R A$ that are detected on the $1$-line of the KSS.
Later in Proposition \ref{prop:main difference}, we will realize certain elements in $\pi_*A \wedge_R A$ as differences of null-homotopies.
The above result allows us to locate such elements in the KSS.
The next proposition is a similar result about how to relate power operations on relative smash products to the maps of commutative $S$-algebras.

\begin{prop}
Suppose that $\varphi:R \to R'$ is a map of commutative $S$-algebras over $\HF_p$, i.e. such that diagram of commutative $S$-algebras
\[
\xymatrix{
R\ar[rr]^{\varphi} \ar[dr]&
&
R'\ar[dl]\\
&
\HF_p&
}
\]
commutes.
If $x \in ker(\varphi_*)$ and $\oo{y}\in \{\ldots,\beta Q^i(\oo{x}), Q^i(\oo{x}),\ldots\} \subset \Tor^{R_*}_1(\F_p,\F_p)$, then $\varphi_*(y)\in \pi_*R'$ is decomposable.
\end{prop}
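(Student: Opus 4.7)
The plan is to combine the naturality of the $\Hi$-structure on the K\"unneth filtration with the identification $\Tor^{R_*}_1(\F_p,\F_p)\iso I_R/I_R^2$ implicit in Proposition \ref{prop: Tor-1 realization}, where $I_R:=\ker(R_*\to\F_p)$ and similarly for $I_{R'}$. The commuting triangle of commutative $S$-algebras with vertex $\HF_p$ ensures that $x\in\ker\varphi_*$ automatically lies in $I_R$, and that $\varphi_*$ carries $I_R\to I_{R'}$ and $I_R^2\to I_{R'}^2$. In particular, ``decomposable in $\pi_*R'$'' means ``contained in $I_{R'}^2$'', and $\varphi_*(y)$ is a well-defined element of $\pi_*R'$ modulo $I_{R'}^2$ once we fix any lift $y\in I_R$ of $\oo{y}$.

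First I would invoke Corollary \ref{cor:Hinfty structure on filtration}: the induced map $\widetilde{\varphi}:\HF_p\wedge_R\HF_p\to\HF_p\wedge_{R'}\HF_p$ of commutative $\HF_p$-algebras lifts to a map of $\Hi$-filtrations, and hence induces a map $\widehat{\varphi}$ of K\"unneth spectral sequences that commutes with Dyer--Lashof operations. Tracing through the construction of Proposition \ref{prop: Tor-1 realization} shows that the restriction of $\widehat{\varphi}$ to the $1$-line of $E_2$ is precisely the map $I_R/I_R^2\to I_{R'}/I_{R'}^2$ induced by $\varphi_*$. Since $\varphi_*(x)=0$ by hypothesis, this gives $\widehat{\varphi}(\oo{x})=[\varphi_*(x)]=0$ in $\Tor^{R'_*}_1(\F_p,\F_p)$.

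The proof then closes formally: by naturality of the operations,
\[
\widehat{\varphi}(\oo{y}) \;=\; \widehat{\varphi}(Q^i\oo{x}) \;=\; Q^i\widehat{\varphi}(\oo{x}) \;=\; Q^i(0) \;=\; 0,
\]
and the same computation applies to $\beta Q^i$. Since $\oo{y}$ is assumed to lie in $\Tor^{R_*}_1(\F_p,\F_p)$, this vanishing takes place in $\Tor^{R'_*}_1(\F_p,\F_p)\iso I_{R'}/I_{R'}^2$, which is exactly the statement that $\varphi_*(y)\in I_{R'}^2$. The point requiring most care is ensuring that the two instances of $Q^i$ above genuinely match under $\widehat{\varphi}$---the domain version comes from the $\Hi$-filtration over $R$, the codomain version from its analogue over $R'$---and this is the naturality clause of Corollary \ref{cor:Hinfty structure on filtration}, itself a consequence of the uniqueness of lifts in the Comparison Theorem \ref{thm: Comparison Theorem}.
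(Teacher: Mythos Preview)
Your proposal is correct and follows essentially the same argument as the paper: both proofs use the naturality of the Dyer--Lashof operations under $\widehat{\varphi}$ together with the identification $\Tor^{R'_*}_1(\F_p,\F_p)\iso I_{R'}/I_{R'}^2$ to conclude that $\widehat{\varphi}(\oo{y})=Q^i\widehat{\varphi}(\oo{x})=0$ forces $\varphi_*(y)\in I_{R'}^2$. Your write-up is somewhat more explicit than the paper's about where the naturality comes from (Corollary~\ref{cor:Hinfty structure on filtration} and the Comparison Theorem), but the logical skeleton is identical.
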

\begin{proof}
Since $\oo{y}=Q^i(\oo{x})$ and 
\[
\widehat{\varphi}_*:\Tor^{R_*}_s(\F_p,\F_p)_t \lra \Tor^{R'_*}_s(\F_p,\F_p)_t
\]
commutes with the action of these operations, we have that $\widehat{\varphi}_*(\oo{y})=0$ whenever $\widehat{\varphi}_*(\oo{x})=0$.
Since $\varphi_*(x)=0$, we have that $\widehat{\varphi}_*(\oo{x})=0\in \Tor^{R'_*}_1(\F_p,\F_p)$.
Therefore, $\widehat{\varphi}_*(\oo{y})=0\in\Tor^{R'_*}_1(\F_p,\F_p)\iso I/I^2$ where $I=ker(R'_* \to \F_p)$, and we have $\varphi_*(y)\in I^2$ as desired.
\end{proof}
Notice that this result does not require that the classes $\oo{x}$ or $\oo{y}$ survive the KSS and so it applies even when these classes may not give null-homotopies as they do in Proposition \ref{prop:main difference}.

The computations of section \ref{sec:comps} have explicit interpretations.
Let us work with a fixed map
\[
 \varphi: R \lra A
\]
of commutative $S$-algebras.
Further, we will assume that $A$ is a cofibrant $R$-module so that $A \wedge_R A$ is $A \wedge_R^L A$.
Here we will interpret the computation of the action of the $A$-Dyer-Lashof algebra on $\pi_* A \wedge_R A$.
At the end of this section we will apply our results to the example of Subsection \ref{subsec:ku over F_2} on $ku \to \HF_2$.

In commutative $S$-algebras, $A \wedge_R A$ is the pushout.
Therefore, a map of commutative $S$-algebras 
\[
\varphi : A\wedge_R A \to X
\]
is a map of commutative $S$-algebras from the diagram $A \lla R \lra A$ to $X$.
Thus, $\varphi$ specifies two potentially different commutative $A$-algebra structures on $X$ with the same underlying commutative $R$-algebra structure.
We wish to know, for example, how many different $A$-algebra structures on $X$ have the same underlying $R$-algebra structure.
How different are the $A$-algebra structures on $X$ coming from $f$ and $g$?
\begin{dfn}
We define the $d(f,g)$, the `\underline{difference}', of two maps of commutative $R$-algebras
\[
f,g: A \lra X
\]
to be the map they induce out of the pushout
\[
d(f,g): A\wedge_R A \lra X.
\]
\end{dfn}

The following lemma is illustrative of the role of the construction $d(f,g)$.
\begin{lemma}
The `difference' $d(f,f)=f\circ \mu$ and factors through the product map $A \wedge_R A \to A$.
\end{lemma}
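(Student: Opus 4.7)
The plan is to use the universal property of the coproduct in commutative $R$-algebras. Recall that $A \wedge_R A$ is precisely the pushout of the diagram $A \la R \ra A$ in commutative $R$-algebras, equipped with canonical inclusions $i_1, i_2 : A \lra A \wedge_R A$ of commutative $R$-algebras. By definition, $d(f,g)$ is the unique map of commutative $R$-algebras out of $A\wedge_R A$ whose precompositions with $i_1$ and $i_2$ are $f$ and $g$ respectively.

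First I would observe that, because $A$ is a commutative $R$-algebra, the product $\mu : A \wedge_R A \lra A$ is itself a morphism of commutative $R$-algebras, and the unit axioms for the monoid $A$ give $\mu \circ i_1 = \mu \circ i_2 = 1_A$. Consequently, for any map of commutative $R$-algebras $f : A \lra X$, the composite $f \circ \mu : A \wedge_R A \lra X$ is a morphism of commutative $R$-algebras that satisfies $(f \circ \mu) \circ i_k = f \circ 1_A = f$ for $k = 1, 2$.

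Finally I would invoke the uniqueness clause in the universal property: $d(f,f)$ is characterized as the unique morphism of commutative $R$-algebras out of $A\wedge_R A$ that precomposes with $i_1$ and $i_2$ to give $f$ and $f$, and the previous paragraph exhibits $f \circ \mu$ as such a morphism. Therefore $d(f,f) = f \circ \mu$, which by construction factors through $\mu : A \wedge_R A \lra A$, as claimed.

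Since the whole argument is a direct unwinding of a universal property once the unit identities $\mu \circ i_k = 1_A$ are in hand, I do not anticipate any obstacle. The only point requiring care is to make sure that we are working with the coproduct in commutative $R$-algebras (so that $i_1, i_2$ really are algebra maps and $d(f,f)$ is characterized uniquely by its restrictions), which is justified by the conventions laid out in the \cite{EKMM} framework adopted at the start of the paper.
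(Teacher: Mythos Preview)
Your argument is correct and is exactly the natural one: the paper does not supply a proof of this lemma at all, treating it as an immediate consequence of the universal property of the pushout in commutative $R$-algebras, which is precisely what you unwind. There is nothing to compare against and no gap in your reasoning.
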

In particular, this implies that we can detect the difference between $f$ and $g$ by considering the effect in homotopy of $d(f,g)$.
For example when $A$ is an Eilenberg-MacLane ring spectrum we see that $f \neq g$ whenever $d(f,g)$ is nonzero in homotopy outside degree zero.

\begin{prop}
\label{prop:main difference}
Let $x\in \ker{\varphi_*: R_* \to A_*}$ be such that the associated class $\oo{x} \in \Tor^{R_*}_1(A_*,A_*)$ survives the KSS and contributes $\oo{x} \in \pi_{n+1} A \wedge_R A$.
Then the class $\oo{x} \in \pi_{n+1} A \wedge_R A$ can be represented as a ``difference'' of null-homotopies of $x$.
\end{prop}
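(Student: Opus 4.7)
The plan is to unpack the construction of $\oo{x}$ given in the proof of Proposition \ref{prop: Tor-1 realization} and recognize it directly as the gluing of two null-homotopies of $\varphi \circ x: S^n \to A$. Recall that the construction produces, in addition to the freely chosen null-homotopy $x'': CS^n \to A$ of $\varphi \circ x$, a second null-homotopy $x': CS^n \to A_1$ of $\alpha_0 \circ x: S^n \to A_1$ arising from the filtration (using the lift $\widetilde{x}: S^n \to F_1$ and the nullness of $\alpha_0 \circ d: F_1 \to A_1$). These two null-homotopies, together with the common underlying map $x: S^n \to R$, assemble via the homotopy pushout of $A \leftarrow R \to A_1$ into a map $S^{n+1} \to A \wedge_R A_1$, and $\oo{x}$ is the image of this map under $1 \wedge_R j_1: A \wedge_R A_1 \to A \wedge_R A$.

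First I would observe that $j_1 \circ x': CS^n \to A$ is itself a null-homotopy of $\varphi \circ x$ in $A$, since $j_1 \circ \alpha_0 = \varphi$ under the identification $A_0 = F_0 = R$. Write $i_L, i_R: A \to A \wedge_R A$ for the two canonical maps of commutative $R$-algebras into the coproduct. Applying $1 \wedge_R j_1$ to the pushout map $S^{n+1} \to A \wedge_R A_1$ and using the canonical identification $A \wedge_R A_0 \simeq A$, the resulting map $S^{n+1} \to A \wedge_R A$ factors as the gluing along $S^n$ of $i_L \circ x''$ and $i_R \circ (j_1 \circ x'): CS^n \to A \wedge_R A$. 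These two extensions agree on $S^n$ because $i_L \circ \varphi = i_R \circ \varphi: R \to A \wedge_R A$ by the pushout property, so they assemble to the well-defined class $\oo{x}$.

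Finally, I would recognize this gluing as a ``difference of null-homotopies'' in the sense of the $d(-,-)$ pairing introduced at the start of this section. With the identifications $S^{n+1} = CS^n \cup_{S^n} CS^n$ and $A \wedge_R A \simeq A \cup_R A$ as (homotopy) pushouts, the map $S^{n+1} \to A \wedge_R A$ is precisely the one induced by the pair of null-homotopies $(x'', j_1 \circ x')$ of $\varphi \circ x$, exhibiting $\oo{x}$ as the discrepancy between these two null-homotopies inside $A \wedge_R A$.

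The main obstacle will be the technical verification in the second paragraph: that the image of the $A \wedge_R A_1$-level pushout under $1 \wedge_R j_1$ really coincides with the gluing of $i_L \circ x''$ and $i_R \circ (j_1 \circ x')$ in $A \wedge_R A$. This is a naturality argument for homotopy pushouts combined with compatibility of the filtration $\Ab$ with the relative smash product, and it requires that the cofibrancy conditions built into the free filtration ensure the strict pushouts in question compute their derived counterparts.
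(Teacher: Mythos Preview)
Your argument is correct, but the paper takes a different and somewhat cleaner route. Rather than tracing back through the filtration-level construction of Proposition~\ref{prop: Tor-1 realization}, the paper works directly with the pushout description of $A\wedge_R A$ in commutative $R$-algebras. It fixes a \emph{single} null-homotopy $x':CS^n\to A$ of $\varphi\circ x$, and then composes with the two units $\eta_l,\eta_r:A\to A\wedge_R A$. The resulting map of pushout diagrams
\[
\bigl(CS^n \leftarrow S^n \rightarrow CS^n\bigr)\ \longrightarrow\ \bigl(A \leftarrow R \rightarrow A\bigr)
\]
induces $\oo{x}:S^{n+1}\to A\wedge_R A$ directly. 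So the two null-homotopies being ``differenced'' in the paper are $\eta_l\circ x'$ and $\eta_r\circ x'$, with the difference arising entirely from the two inclusions of $A$ into the coproduct rather than from two genuinely distinct null-homotopies in $A$.

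Your approach has the virtue of tying the class explicitly to the one produced in Proposition~\ref{prop: Tor-1 realization}, making clear that the KSS class and the difference-of-null-homotopies class coincide; the paper's proof is more self-contained and symmetric but leaves that identification implicit. A small notational point: you have reversed the roles of $x'$ and $x''$ relative to Proposition~\ref{prop: Tor-1 realization}, where $x':CS^n\to A$ is the freely chosen null-homotopy and $x'':CS^n\to A_1$ is the one coming from the lift through $F_1$.
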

\begin{proof}
We can construct $\oo{x} \in \pi_{n+1} A \wedge_R A$ as a ``difference'' of null-homotopies as follows given that it comes from an element $x\in \ker{\varphi_*: R_* \to A_*}$.
When the map 
\[
x: S^n \lra R 
\]
is composed with
\[
\varphi: R \lra A
\]
it becomes null-homotopic.
After composing $\varphi\circ x$ with the right unit 
\[
\eta_r: A\we A\wedge_R R \sr{1\wedge \varphi}\lra A \wedge_R A
\]
we obtain the diagram
\[
\xymatrix{
S^n \ar[rr]^x \ar[dd]_i&
&
R \ar[dd]_{\varphi} \ar@/^1pc/[rddd]&
\\
&
&
&
\\
CS^n \ar[rr]^{x'} \ar@/_1pc/[rrrd]&
&
A \ar[dr]^{\eta_r}&
\\
&
&
&
A \wedge_R A.
}
\]
Composition with the left unit yields the diagram
 \[
\xymatrix{
S^n \ar[dd]^x \ar[rr]_i&
&
CS^n \ar[dd]^{x'} \ar@/^1pc/[rddd]&
\\
&
&
&
\\
R \ar[rr]_{\varphi}\ar@/_1pc/[rrrd]&
&
A \ar[dr]^{\eta_l}&
\\
&
&
&
A \wedge_R A.
}
\]
Suturing these diagrams together, we obtain
\[
\xymatrix{
S^n \ar[rrr]^i \ar[ddd]_i \ar[dr]^x&
&
&
CS^n \ar[dr]^{x'} \ar[ddd]\\
&
R \ar[rrr]^{\varphi} \ar[ddd]_{\varphi}&
&
&
A \ar[ddd]^{\eta_l}\\
&
&
&
&
\\
CS^n \ar[dr]_{x'} \ar[rrr]&
&
&
S^{n+1} \ar[dr]^{\oo{x}}&
\\
&
A \ar[rrr]^{\eta_r}&
&
&
A \wedge_R A
}
\]
where the back face is a pushout in $S$-modules. 
This constructs $\oo{x} \in \pi_{n+1} A \wedge_R A$.
\end{proof}
The class $\oo{x}$ is a witness to the fact that $x$ has been made null-homotopic in $A \wedge_R A$ in two different ways.
In fact, $d(f,g)_*(\oo{x})$ is the obstruction to $f$ and $g$ giving the same null-homotopy of $x$.

\begin{lemma}
\label{lemma:difference}
For $x \in \pi_n R$ as above, we have that $d(f,g)_*(\oo{x})=0$ in $\pi_{n+1}X$ if and only if $f \vert_{CS^n} \we g \vert_{CS^n}$ relative to $S^n$.
\end{lemma}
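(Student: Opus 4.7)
The plan is to unwind the pushout construction of $\oo{x}$ from Proposition \ref{prop:main difference} and then invoke a classical torsor statement about null-homotopies. Fix the null-homotopy $x': CS^n \lra A$ of $\varphi\circ x$ used there. By that proposition, $\oo{x}$ is represented by the map out of the pushout
\[
S^{n+1} \;\simeq\; CS^n \cup_{S^n} CS^n \;\lra\; A\wedge_R A
\]
whose two halves are $\eta_r\circ x'$ and $\eta_l\circ x'$; these agree on the common $S^n$ because $\eta_l\circ \varphi = \eta_r\circ \varphi$ as maps $R \to A\wedge_R A$, which is exactly the pushout relation defining the relative smash product.

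Next, I would use the universal property of the pushout $A\wedge_R A$: since $d(f,g)$ is the map from the pushout induced by $f$ and $g$, we have $d(f,g)\circ \eta_l = f$ and $d(f,g)\circ \eta_r = g$. Post-composing the representative of $\oo{x}$ with $d(f,g)$ therefore produces the map $S^{n+1}\lra X$ obtained by pasting $f\circ x'$ and $g\circ x'$ along their common restriction to $S^n$, namely the composite $f\circ \varphi\circ x = g\circ \varphi\circ x$ (the two agree since $f,g$ are $R$-algebra maps into the fixed commutative $R$-algebra $X$). So $d(f,g)_*(\oo{x}) \in \pi_{n+1}X$ is exactly the element assembled from the two null-homotopies $f|_{CS^n}:= f\circ x'$ and $g|_{CS^n}:= g\circ x'$ of $f\circ\varphi\circ x$.

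With this identification in hand, the lemma reduces to the standard fact that, for any map $\alpha: S^n\to X$ and any two null-homotopies $h_0, h_1: CS^n\to X$ of $\alpha$, the pasted map $h_0\cup_\alpha h_1: S^{n+1}\to X$ represents the zero element of $\pi_{n+1}X$ if and only if $h_0 \we h_1$ relative to $S^n$. This is the assertion that the set of homotopy classes rel $S^n$ of null-homotopies of $\alpha$ forms a torsor over $\pi_{n+1}X$, with the torsor ``difference'' given precisely by the pasting construction; both directions come from converting between an extension of a map $S^{n+1}\to X$ to a disk $D^{n+2}$ and a relative homotopy $CS^n \wedge I \to X$ via the evident cell structure. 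Applying this to $\alpha = f\circ\varphi\circ x$, $h_0 = f\circ x'$, $h_1 = g\circ x'$ gives both implications of the lemma.

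I do not anticipate a genuine obstacle: the argument is essentially bookkeeping with the pushout $A\wedge_R A$ combined with the classical torsor statement, which is available in any pointed stable model category with cofiber sequences, and in particular in $R$-modules.
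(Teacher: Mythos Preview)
Your proposal is correct and follows essentially the same approach as the paper: both identify $d(f,g)_*(\oo{x})$ as the gluing of the two null-homotopies $f\circ x'$ and $g\circ x'$ along $S^n$, and then reduce to the classical fact that such a glued map $S^{n+1}\to X$ is null-homotopic precisely when the two null-homotopies are homotopic rel $S^n$. Your version is more explicit about the pushout universal property and the torsor formulation, and you correctly write $D^{n+2}$ where the paper has the minor typo $D^{n+1}$, but the argument is the same.
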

\begin{proof}
This proof is elementary homotopy theory.
Observe that $d(f,g)_*(\oo{x})=0$ in $\pi_{n+1} X$ if and only if it can be extended over $S^{n+1} \subset D^{n+1}$.
The element $d(f,g)_*(\oo{x})\pi_{n+1}(A\wedge_R A)$ was defined as $f \vert_{CS^n}$ on one half of $S^{n+1}$, $g \vert_{CS^n}$ on the other half of $S^{n+1}$, and $f \circ \varphi \circ x=g \circ \varphi \circ x$ on the equatorial $S^n$.
Therefore an extension of $d(f,g)_*(\oo{x})$ to all of $D^{n+1}$ is equivalent to the desired homotopy of $f \vert_{CS^n} \we g \vert_{CS^n}$ relative to $S^n$.
\end{proof}

We now apply the above discussion to interpret the computations made in the previous section.
Let $f,g: \HF_p \to X$ be two maps of commutative $S$-algebras  which are equalized by the reduction map $\HZ \to \HF_p$.
They then give a map $d(f,g): \HF_p \wedge_{\HZ} \HF_p \to X$.
The existence of $f$ and $g$ imply that we have a commutative $S$-algebra over $\HZ$ which can be made a commutative $\HF_p$-algebra in two potentially different ways, say $\widetilde{X}_f$ and $\widetilde{X}_g$.
By the above result, Lemma \ref{lemma:difference}, we see that if $d(f,g)_*(\oo{p}) \neq 0$ then $f$ is not homotopic to $g$.
It is in this way that we think of $\oo{p}$ as an obstruction.
It is not a complete obstruction in the sense that the vanishing of $d(f,g)_*(\oo{p})$  does not guarantee that $f\we g$, therefore we will refer to such classes as witnesses.

Now suppose in addition that we have a map of commutative $S$-algebras $\varphi : R \to \HF_p$ which also equalizes $f$ and $g$ and that $p \neq 0$ in $\pi_* R$.
Let $x \in \pi_* R$ be as above, so that we have $\oo{p},\oo{x} \in \pi_*\HF_p \wedge_R\HF_p$.
If $\oo{p}$ and $\oo{x}$ are related in $\pi_*\HF_p \wedge_R\HF_p$ by a power operation, say $Q^i(\oo{p})=\oo{x}$, then $d(f,g)_*(\oo{p})=0$ implies $d(f,g)_*(\oo{x})=0$ also.
In this way, $\oo{p}$ is a sort of indecomposable witness in that it generates an element in the cotangent complex of $\varphi$.
This simple observation allows us to interpret the above computations of section \ref{sec:comps} as saying that when we compare two different commutative $ku$, $\BP$, or $MU$-algebra structures on an $\Ei$-dga, the null-homotpies of $2$ that we choose effect the null-homotopies of $v$ and the $v_i$.
This seems a strange fact, as all of these classes necessarily act trivially on $X$ or $\widetilde{X}$.
However, their trivializations are related.
One might hope that the above considerations can be strengthened to give a construction of an explicit null-homotopy of $v$ or $v_i$ respectively.
For example. as $Q^{2}(\oo{2})=\oo{v}$ for $2,v \in \pi_* ku$ we would like to use a null-homotopy of $2$ to produce a null-homotopy of $v$ depending functorially on the null-homotopy of $2$.
The obvious construction fails as there is no operation $Q^2(2)=v$ in $\pi_* ku$, even though $Q^2(\oo{2})=\oo{v}$ in $\pi_* \HF_2 \wedge_{ku} \HF_2$.

\addcontentsline{toc}{section}{References}
\bibliography{Bibliography}{}
\bibliographystyle{plain}

\end{document}